\newcommand{\N}{\mathbb{N}}
\newcommand{\R}{\mathbb{R}}
\newcommand{\Z}{\mathbb{Z}}
\renewcommand{\div}{ \mathrm{div}  }
\newcommand{\dHaus}{\, d\Gamma}
\newcommand{\dd}{\, d}
\newcommand{\dx}{\, dx}
\newcommand{\dt}{\, dt}
\newcommand{\pd}{\partial}
\newcommand{\pdnu}{\pd_{\bm{\nu}}}
\renewcommand{\AA}{\underline{\underline{\mathrm{A}}}}
\newcommand{\abs}[1]{\left| #1 \right|}
\newcommand{\norm}[1]{\| #1 \|}
\newcommand{\inner}[2]{\langle #1 , #2 \rangle}
\newcommand{\eps}{\varepsilon}
\newcommand{\Lap}{\Delta}
\newcommand{\surf}{\nabla_{\Gamma}}
\newcommand{\LB}{\Delta_{\Gamma}}
\theoremstyle{plain}
\newtheorem{thm}{Theorem}[section]
\newtheorem{lemma}[thm]{Lemma}
\theoremstyle{plain}
\newtheorem{remark}{Remark}[section]
\numberwithin{equation}{section}
\begin{document}

\title{On a coupled bulk-surface Allen--Cahn system with an affine linear transmission condition and its approximation by a Robin boundary condition}

\author{Pierluigi Colli \footnotemark[1] \and Takeshi Fukao \footnotemark[2] \and Kei Fong Lam \footnotemark[3]}

\date{ }

\maketitle

\renewcommand{\thefootnote}{\fnsymbol{footnote}}
\footnotetext[1]{Dipartimento di Matematica, Universit\`{a} degli Studi di Pavia, Via Ferrata 5, 27100 Pavia, Italy \tt (pierluigi.colli@unipv.it).}
\footnotetext[2]{Department of Mathematics, Faculty of Education, Kyoto University of Education, 1 Fujinomori, Fukakusa, Fushimi-ku, Kyoto 612-8522 Japan \tt (fukao@kyokyo-u.ac.jp).}
\footnotetext[3]{Department of Mathematics, The Chinese University of Hong Kong, Shatin, N.T., Hong Kong \tt (kflam@math.cuhk.edu.hk)}

\begin{abstract}
We study a coupled bulk-surface Allen--Cahn system with an affine linear transmission condition, that is, the trace values of the bulk variable and the values of the surface variable are connected via an affine relation, and this serves to generalize the usual dynamic boundary conditions.  We tackle the problem of well-posedness via a penalization method using Robin boundary conditions.  In particular, for the relaxation problem, the strong well-posedness and long-time behavior of solutions can be shown for more general and possibly nonlinear relations.  New difficulties arise since the surface variable is no longer the trace of the bulk variable, and uniform estimates in the relaxation parameter are scarce.  Nevertheless, weak convergence to the original problem with affine linear relations can be shown.  Using the approach of Colli and Fukao (Math. Models Appl. Sci. 2015), we show strong existence to the original problem with affine linear relations, and derive an error estimate between solutions to the relaxed and original problems.
\end{abstract}

\noindent \textbf{Key words. } Allen--Cahn equation, maximal monotone graphs, dynamic boundary conditions, well-posedness, penalization via Robin boundary conditions.\\

\noindent \textbf{AMS subject classification.} 35B40, 35D35, 35K20, 35K61, 35K86

\section{Introduction}\label{sec:Intro}
For parabolic partial differential equations the most common boundary conditions encountered will be of Dirichlet, Neumann or Robin type, which can be roughly classified as spatial in nature, that is, no time derivatives are present in the boundary conditions.  Meanwhile, boundary conditions involving the time derivative of the variables are called dynamic boundary conditions, and this may arise when one considers the effects of the boundary of the domain on the evolution of the variable of interest.  For instance, a prototype heat equation with dynamic boundary conditions posed in a bounded domain $\Omega$ with boundary $\Gamma := \pd \Omega$ reads as
\begin{align*}
\pd_{t} u = \Lap u + f & \text{ in } \Omega, \\
\gamma \pd_{t} u_{\Gamma} = \sigma \LB u_{\Gamma} - \pdnu u - \kappa u_{\Gamma} + g & \text{ on } \Gamma, \\
u_{\Gamma} = u \vert_{\Gamma} & \text{ on } \Gamma,
\end{align*}
for some non-negative constants $\gamma$, $\sigma$ and $\kappa$, and prescribed data $f$ and $g$.  In the above, $\pdnu u := \nabla u \cdot \bm{\nu}$ denotes the normal derivative of $u$ on $\Gamma$, with unit outer normal $\bm{\nu}$, and $\LB$ denotes the Laplace--Beltrami operator on $\Gamma$ (see for example \cite[\S 2.2, (2.5)]{DE}).  The presence of the Laplace--Beltrami operator signals the existence of a \emph{surface free energy} for the surface variable $u_{\Gamma}$, which is the trace of $u$ on $\Gamma$ according to the third equation. 

For equations arising from diffuse interface models, such as the Allen--Cahn equation \cite{AC}, the Cahn--Hilliard equation \cite{CH} and the Caginalp system \cite{Cag}, the problems with dynamic boundary conditions have been studied by many authors (see for example \cite{CalaColli, ColliFukaoAC, CGNS, GalG1, GalG2, Gilardi, Miranville, WuZheng} and the references cited therein).  Moreover, one can also include the effects of phase separation on the boundary $\Gamma$ by including additional terms in the dynamic boundary conditions that lead to Allen--Cahn or Cahn--Hilliard structures \cite{Fischer,Kenzler} (see also \cite{LiuWu} and the references cited therein).  However, the transmission condition $u_{\Gamma} = u \vert_{\Gamma}$ between the bulk and surface variables remains unchanged.

In this work, we consider a modification of the transmission condition, namely by denoting the bulk variable as $u$ and the surface variable as $\phi$, we ask that $u \vert_{\Gamma} = h(\phi)$ holds on $\Gamma$ for some continuous function $h : \R \to \R$.  The usual transmission condition is recovered by setting $h$ as the identity function, i.e., $h(s) = s$.  Our motivation for this choice of transmission condition arises from the special case $h(s) = -s$.  In the context of phase separation, the surface phase is the opposite of the bulk phase, and this may lead to new and interesting couplings between bulk and surface dynamics. 

For instance, consider a  bulk Cahn--Hilliard equation for $u$ coupled to a surface Cahn--Hilliard equation for $\phi$ via the transmission condition $u \vert_{\Gamma} = - \phi$ on the boundary $\Gamma$.  It is well-known that for a constant mobility, the bulk Cahn--Hilliard equation approximates the Mullins--Sekerka flow \cite{Escher, MS} in the limit of vanishing interfacial thickness \cite{ABC, Chen, Pego}, and the corresponding approximation of a surface Mullins--Sekerka flow also holds (at least formally) for the surface Cahn--Hilliard equation (see \cite{Bjorn} which treats a more general setting involving evolving surfaces, or \cite{GKRR} which treats a coupled surface-Cahn--Hilliard bulk-diffusion model).  It is also well-known that the Mullins--Sekerka flow can be seen as a nonlocal motion by mean curvature that preserves volume \cite[\S 5]{Pego} and that $d$-spheres (for the problem posed in $\R^{d}$) are exponentially stable \cite{Escher}.  Then, we expect that in the case $h(s) = s$, i.e., $u \vert_{\Gamma} = \phi$, a ball $B$ of phase $+1$ located in the interior of the domain $\Omega$ and surrounded by a sea of phase $-1$ (i.e., $\Omega \setminus \overline{B}$ and $\Gamma$ are of phase $-1$) would be a stable configuration.  However, in the case $h(s) = -s$, due to the mismatch of the boundary values, the transmission condition would attempt to change the values on the boundary, and we expect that for relaxation dynamics the ball of phase $+1$ may shrink while the phase $+1$ would start to appear at the boundary $\Gamma$.  If the volume of the ball is equal to the surface area of $\Gamma$, then as a stable configuration we would have only phase $-1$ in $\Omega$ and only phase $+1$ on $\Gamma$.

We begin our investigation with the simplest case: a coupled bulk-surface Allen--Cahn system of the form
\begin{subequations}\label{Intro:ACAC:lim}
\begin{alignat}{3}
\pd_{t} u - \Lap u + \beta(u) + \pi(u) \ni f & \text{ in } Q := \Omega \times (0,T), \\
\pd_{t}\phi - \LB \phi + \beta_{\Gamma}(\phi) + \pi_{\Gamma}(\phi) + h'(\phi) \pdnu u \ni f_{\Gamma} & \text{ on } \Sigma := \Gamma \times (0,T), \\
u = h(\phi) & \text{ on } \Sigma,
\end{alignat}
\end{subequations}
where $\Omega \subset \R^{3}$ is a bounded domain and $T > 0$ is an arbitrary but fixed constant.  In the above, $f$ and $f_{\Gamma}$ are prescribed external forcings, $\beta$ and $\beta_{\Gamma}$ are maximal monotone and possibly non-smooth graphs, while $\pi$ and $\pi_{\Gamma}$ are non-monotone Lipschitz perturbations.  Let us point out the effect of a non-trivial relation $u = h(\phi)$ leads to the appearance of a prefactor $h'(\phi)$ multiplied with the normal derivative $\pdnu u$.  In the case $h(s) = s$, this reduces to $h'(\phi) = 1$ and we are in the setting of Calatroni and Colli \cite{CalaColli}.  If we define $g$ as the inverse of $h$, i.e., $\phi = g(u)$, then by the chain rule $h'(\phi) = (g'(u))^{-1}$, and we can reformulate \eqref{Intro:ACAC:lim} into the system
\begin{subequations}\label{Intro:ACAC:lim:g}
\begin{alignat}{3}
\pd_{t} u - \Lap u + \beta(u) + \pi(u) \ni f & \text{ in } Q, \\
g'(u_{\Gamma}) \left ( \pd_{t} g(u_{\Gamma}) - \LB g(u_{\Gamma}) + \beta_{\Gamma}(g(u_{\Gamma})) + \pi_{\Gamma}(g(u_{\Gamma})) - f_{\Gamma} \right ) + \pdnu u \ni 0 & \text{ on } \Sigma, \\
u \vert_{\Gamma} = u_{\Gamma} & \text{ on } \Sigma,
\end{alignat}
\end{subequations}
whose variational formulation reads as
\begin{equation}\label{Weakform:limit:g}
\begin{aligned}
0 & = \int_{\Omega} \left ( \pd_{t} u + \xi + \pi(u) - f \right ) \, \zeta + \nabla u \cdot \nabla \zeta  \dx \\
& \quad + \int_{\Gamma} g'(u_{\Gamma}) \left ( \pd_{t} g(u_{\Gamma}) + \xi_{\Gamma} + \pi_{\Gamma}(g(u_{\Gamma})) - f_{\Gamma} \right ) \, \zeta_{\Gamma} + \surf g(u_{\Gamma}) \cdot \surf \left ( g'(u_{\Gamma}) \zeta_{\Gamma} \right ) \dHaus
\end{aligned}
\end{equation}
for all $\zeta \in H^{1}(\Omega)$ such that $\zeta_{\Gamma} := \zeta \vert_{\Gamma} \in H^{1}(\Gamma)$ and $\xi \in \beta(u)$ a.e.~in $Q$, $\xi_{\Gamma} \in \beta_{\Gamma}(g(u_{\Gamma}))$ a.e.~on $\Sigma$.

Immediately one observes that there will be difficulties in passing to the limit in some approximation scheme to obtain the last term of \eqref{Weakform:limit:g} if $g$ is a nonlinear function.  In the case $g$ (and also $h$) is an affine linear function, i.e., $g(s) = \alpha^{-1}(s-\eta)$ for some $\alpha \neq 0$, $\eta \in \R$, we can appeal to the procedure in Colli and Fukao \cite{ColliFukaoAC} to deduce the existence of strong solutions to \eqref{Intro:ACAC:lim:g}.  However, for a more general and possibly nonlinear relation even the existence of a weak solution to \eqref{Intro:ACAC:lim:g} is an open problem due to the highly nonlinear surface equation.

Let us also mention that in \cite{CalaColli, ColliFukaoAC} the maximal monotone graphs $\beta$ and $\beta_{\Gamma}$ satisfy a compatibility condition \cite[(2.22)-(2.23)]{CalaColli}, which in some sense requires that $\beta_{\Gamma}$ is dominating $\beta$.  As discussed in Remark \ref{rem:CalaColli}, this type of assumption may not hold for the affine linear case, and thus we encounter new difficulties in deducing estimates for the selections $\xi$ and $\xi_{\Gamma}$.  This can be overcome by prescribing some growth assumptions on $\beta$ and $\beta_{\Gamma}$ such as \eqref{Limit:strong:beta}.

Alternatively, we can view $u \vert_{\Gamma} = h(\phi)$ as a Dirichlet boundary condition for $u$ and approximate it using a Robin boundary condition (also known as the boundary penalty method \cite{Babuska,Barrett}).  For $K > 0$ consider the system
\begin{subequations}\label{Intro:ACAC:gen}
\begin{alignat}{3}
\pd_{t} u_{K}  - \Lap u_{K} + \beta(u_{K}) + \pi(u_{K}) \ni f & \text{ in } Q, \\
\pd_{t} \phi_{K}  - \LB \phi_{K} + \beta_{\Gamma}(\phi_{K}) + \pi_{\Gamma}(\phi_{K}) + h'(\phi_{K}) \pdnu u_{K} \ni f_{\Gamma} & \text{ on } \Sigma, \\
K \pdnu u_{K} + u_{K} = h(\phi_{K}) & \text{ on } \Sigma,
\end{alignat}
\end{subequations}
where we now view $u_{K}$ and $\phi_{K}$ as independent variables that are coupled via the term $h'(\phi_{K}) \pdnu u_{K}$ and the Robin boundary condition.  In the formal limit $K \to 0$, we recover the transmission condition $u \vert_{\Gamma} = h(\phi)$.  It turns out we can prove strong well-posedness and long-time behavior of solutions to \eqref{Intro:ACAC:gen} for relations $h \in C^{2}(\R)$ that only need to satisfy $h', h'' \in L^{\infty}(\R)$.  Hence, nonlinear relations are possible for the relaxation problem \eqref{Intro:ACAC:gen}.  Furthermore, in the affine linear case $h(s) = \alpha s + \eta$, the sequence $\{(u_{K}, \phi_{K})\}_{K \in (0,1]}$ converges weakly to a limit $(u,\phi)$ as $K \to 0$ which is a weak solution to \eqref{Intro:ACAC:lim}.  Thanks to the strong well-posedness of \eqref{Intro:ACAC:lim} for affine linear relations we can also derive an error estimate of the form (under the same data and initial conditions)
\begin{align*}
\norm{u_{K} - u}_{\mathbb{X}_{\Omega}} + \norm{\phi_{K} - \phi}_{\mathbb{X}_{\Gamma}} + K^{-\frac{1}{2}} \norm{\alpha \phi_{K} + \eta - u_{K}}_{L^{2}(\Sigma)} \leq C K^{\frac{1}{2}} \norm{\pdnu u}_{L^{2}(\Sigma)},
\end{align*}
where $\mathbb{X}_{\Omega} := L^{\infty}(0,T;L^{2}(\Omega)) \cap L^{2}(0,T;H^{1}(\Omega))$ and $\mathbb{X}_{\Gamma}$ is defined similarly.  In particular, the transmission condition $u \vert_{\Sigma} = \alpha \phi + \eta$ on $\Sigma$ of the limit problem \eqref{Intro:ACAC:lim} is obtained from the Robin problem \eqref{Intro:ACAC:gen} at a linear rate in $K$.

Let us summarize on the main novelties of this work: we study the well-posedness of a coupled bulk-surface Allen--Cahn system with maximal monotone graphs and an affine linear transmission condition via two methods.  Strong solutions are established by using an abstract formulation, while weak solutions can be obtained as weak limits of a relaxation problem with Robin boundary conditions.  Strong well-posedness and long-time behavior for the relaxation problem with rather general relation $h$ are established, and a rate of convergence to strong solutions of the original problem is given. 

The structure of this article is as follows:  in Secection \ref{sec:derivation}, we derive the Robin approximation of coupled bulk-surface Allen--Cahn/Cahn--Hilliard systems that are thermodynamically consistent.  The main results on the coupled bulk-surface Allen--Cahn system are stated in Section \ref{sec:main}.  In Section \ref{sec:ctsdep} and \ref{sec:exist}, via a two-level approximation, the strong well-posdness of \eqref{Intro:ACAC:gen} is shown.  Long-time behavior of solutions is discussed in Section \ref{sec:longtime}, and in Section \ref{sec:limit} we study the well-posedness of \eqref{Intro:ACAC:lim} first by showing the weak convergence of solutions to \eqref{Intro:ACAC:gen}, and then by establishing strong solutions to \eqref{Intro:ACAC:lim:g}.  Lastly an error estimate between solutions of \eqref{Intro:ACAC:lim} and \eqref{Intro:ACAC:gen} is derived.

\section{Formal derivation}\label{sec:derivation}
Let $\Omega \subset \R^{3}$ denote a bounded domain with smooth boundary $\Gamma$.  For fixed time $T > 0$ let $Q := \Omega \times (0,T)$ and $\Sigma := \Gamma \times (0,T)$.  Let $u$ and $\phi$ denote variables that satisfy the following balance laws
\begin{equation}\label{prototype}
\begin{aligned}
\pd_{t} u + \div \bm{J}_{u} + R = 0 & \text{ in  } Q, \\
\pd_{t} \phi + \div_{\Gamma} \bm{J}_{\phi} + Z = 0 & \text{ on } \Sigma,
\end{aligned}
\end{equation}
where the fluxes $\bm{J}_{u}$, $\bm{J}_{\phi}$ and the reaction terms $R$ and $Z$ are yet to be determined.  We prescribe a free energy of the form
\begin{align}
\label{Derivation:Energy}
E(u, \phi) := \int_{\Omega} \frac{1}{2} \abs{\nabla u}^{2} + W(u) \dx + \int_{\Gamma} \frac{1}{2} \abs{\surf \phi}^{2} + W_\Gamma(\phi) \dHaus + \int_{\Gamma} \frac{1}{2K} \abs{u - h(\phi)}^{2} \dHaus,
\end{align}
where $K > 0$ is a constant.  The first and second terms of $E$ are the bulk and surface Ginzburg--Landau free energies, respectively, with potentials $W$ and $W_{\Gamma}$, and the third term measures the deviation of $u \vert_{\Gamma}$ from $h(\phi)$.  To derive thermodynamically consistent model equations based on \eqref{prototype}, we employ the Lagrange multiplier method \cite{Liu} of M\"{u}ller and Liu, see also \cite[Section 2.2]{AGG12} and \cite[Chapter 7]{book:Liu} for more details.  Let $\lambda_{u}$ and $\lambda_{\phi}$ denote Lagrange multipliers for \eqref{prototype}.  Then the procedure of M\"{u}ller and Liu is to enforce
\begin{align}
\label{dissipation}
\mathcal{D} := \frac{\dd E}{\dt} - \int_{\Omega} \lambda_{u} \left ( \pd_{t}u + \div \bm{J}_{u} + R \right ) \dx - \int_{\Gamma} \lambda_{\phi} \left ( \pd_{t} \phi + \div_{\Gamma} \bm{J}_{\phi} + Z \right ) \dHaus \leq 0
\end{align}
for arbitrary $u$, $\phi$, $\pd_{t} u$ and $\pd_{t} \phi$.  A short computation shows that
\begin{align*}
\mathcal{D} & = \int_{\Omega} (-\Lap u + W'(u) - \lambda_{u}) \pd_{t} u + \nabla \lambda_{u} \cdot \bm{J}_{u} - \lambda_{u} R \dx \\
& \quad + \int_{\Gamma} (\pdnu u + K^{-1}(u - h(\phi))) \pd_{t} u \dHaus - \int_{\Gamma} Z \lambda_{\phi} + \lambda_{u} \bm{J}_{u} \cdot \bm{\nu} \dHaus \\
& \quad + \int_{\Gamma} (-\LB \phi + W_{\Gamma}'(\phi) - K^{-1} (u - h(\phi))h'(\phi) - \lambda_{\phi}) \pd_{t} \phi + \surf \lambda_{\phi} \cdot \bm{J}_{\phi} \dHaus.
\end{align*}
In order for $\mathcal{D} \leq 0$ to be satisfied for arbitrary $u$, $\phi$, $\pd_{t} u$ and $\pd_{t} \phi$, the prefactors in front of $\pd_{t} u$ and $\pd_{t} \phi$ must vanish.  That is, we make the constitutive assumptions
\begin{equation}\label{constitutive}
\begin{aligned}
\lambda_{u} & = -\Lap u + W'(u), \\
\lambda_{\phi} & = -\LB \phi + W_{\Gamma}'(\phi) - K^{-1}(u - h(\phi))h'(\phi), \\
K \pdnu u & = h(\phi) - u.
\end{aligned}
\end{equation}
Different considerations of the fluxes $\bm{J}_{u}, \bm{J}_{\phi}$ and the reaction terms $R, Z$ will lead to different sets of equations, and now we will consider the following choices:

\subsection{Allen--Cahn/Allen--Cahn system} Setting $\bm{J}_{u} = \bm{0}$, $\bm{J}_{\phi} = \bm{0}$, and choosing 
\begin{align*}
R = \lambda_{u}, \quad Z = \lambda_{\phi}
\end{align*}
leads to a coupled bulk-surface Allen--Cahn system:
\begin{equation}\label{ACAC}
\begin{alignedat}{3}
\pd_{t}u  - \Lap u + W'(u) & = 0 & \text{ in } Q, \\
\pd_{t}\phi - \LB \phi + W_{\Gamma}'(\phi) + h'(\phi) \pdnu u & = 0 & \text{ on } \Sigma, \\
K \pdnu u + u & = h(\phi) \quad & \text{ on } \Sigma,
\end{alignedat}
\end{equation}
that satisfies the energy identity
\begin{align}\label{ACAC:Energy}
\frac{\dd}{\dt} E(u,\phi) + \int_{\Omega} \abs{\pd_{t}u}^{2} \dx + \int_{\Gamma} \abs{\pd_{t}\phi}^{2} \dHaus = 0.
\end{align}
\subsection{Cahn--Hilliard/Cahn--Hilliard system} 
Setting $R = 0$ and choosing for a positive constant $M$ and non-negative mobilities $m(u)$ and $n(\phi)$,
\begin{align*}
\bm{J}_{u} = - m(u) \nabla \lambda_{u}, \quad \bm{J}_{\phi} = - n(\phi) \surf \lambda_{\phi}, \quad Z = m(u)\pdnu \lambda_{u} = M^{-1}(\lambda_{\phi} - \lambda_{u}),
\end{align*}
leads to a coupled bulk-surface Cahn--Hilliard system:
\begin{equation}\label{CHCH}
\begin{alignedat}{3}
\pd_{t}u - \div (m(u) \nabla \lambda_{u}) &= 0, \quad \lambda_{u} = - \Lap u + W'(u) & \text{ in } Q, \\
\pd_{t}\phi - \div_{\Gamma} (n(\phi) \surf \lambda_{\phi}) + m(u) \pdnu \lambda_{u} & = 0, \quad  \lambda_{\phi} = -\LB \phi + W_{\Gamma}'(\phi) + h'(\phi) \pdnu u \quad & \text{ on } \Sigma, \\
K \pdnu u + u & = h(\phi), \quad  M m(u)\pdnu \lambda_{u} + \lambda_{u} = \lambda_{\phi} & \text{ on } \Sigma,
\end{alignedat}
\end{equation}
that satisfies the energy identity
\begin{align}\label{CHCH:Energy}
\frac{\dd}{\dt} E(u, \phi) + \int_{\Omega} m(u) \abs{\nabla \lambda_{u}}^{2} \dx + \int_{\Gamma} n(\phi) \abs{\surf \lambda_{\phi}}^{2} + M^{-1} \abs{\lambda_{u} - \lambda_{\phi}}^{2} \dHaus = 0.
\end{align}

\subsection{Bulk Allen--Cahn and surface Cahn--Hilliard system}  
Setting $\bm{J}_{u} = \bm{0}$ and choosing for a non-negative constant $\gamma$ and a non-negative mobility $n(\phi)$,
\begin{align*}
R = \lambda_{u}, \quad \bm{J}_{\phi} = - n(\phi) \surf \lambda_{\phi}, \quad Z = \gamma \, \lambda_{\phi},
\end{align*}
leads a bulk Allen--Cahn equation coupled to a surface Cahn--Hilliard-type equation:
\begin{equation}\label{ACCH}
\begin{alignedat}{3}
\pd_{t}u - \Lap u + W'(u) & = 0 & \text{ in } Q, \\
\pd_{t}\phi - \div_{\Gamma}(n(\phi) \surf \lambda_{\phi}) + \gamma \lambda_{\phi} & = 0, \quad \lambda_{\phi} = - \LB \phi + W_{\Gamma}'(\phi) + h'(\phi) \pdnu u & \quad \text{ on } \Sigma, \\
K \pdnu u + u & = h(\phi) & \text{ on } \Sigma,
\end{alignedat}
\end{equation}
that satisfies the energy identity
\begin{align}\label{ACCH:Energy}
\frac{\dd}{\dt} E(u, \phi) + \int_{\Omega} \abs{\pd_{t} u}^{2} \dx + \int_{\Gamma} n(\phi) \abs{\surf \lambda_{\phi}}^{2} + \gamma \abs{\lambda_{\phi}}^{2} \dHaus = 0.
\end{align}

\subsection{Bulk Cahn--Hilliard and surface Allen--Cahn system} 
Setting $\bm{J}_{\phi} = \bm{0}$ and choosing for a non-negative constant $\gamma$ and a non-negative mobility $m(u)$,
\begin{align*}
R = \gamma \, \lambda_{u}, \quad \bm{J}_{u} = - m(u) \nabla \lambda_{u}, \quad Z =  \lambda_{\phi},
\end{align*}
leads to a bulk Cahn--Hilliard-type equation coupled to a surface Allen--Cahn-type equation:
\begin{equation}\label{CHAC}
\begin{alignedat}{3}
\pd_{t}u - \div (m(u) \nabla \lambda_{u}) + \gamma \, \lambda_{u} & = 0, \quad \lambda_{u} = -\Lap u + W'(u) & \text{ in } Q, \\
\pd_{t}\phi & = \LB \phi - W_{\Gamma}'(\phi) - h'(\phi) \pdnu u & \quad \text{ on } \Sigma, \\
K \pdnu u + u & = h(\phi) & \text{ on } \Sigma,
\end{alignedat}
\end{equation}
that satisfies the energy identity
\begin{align}\label{CHAC:Energy}
\frac{\dd}{\dt} E(u, \phi) + \int_{\Omega} m(u) \abs{\nabla \lambda_{u}}^{2} + \gamma \abs{\lambda_{u}}^{2} \dx + \int_{\Gamma} \abs{\pd_{t}\phi}^{2} - m(u) \lambda_{u} \pdnu \lambda_{u} \dHaus = 0.
\end{align}
One can prescribe homogeneous Neumann conditions $m(u) \pdnu \lambda_{u} = 0$ or Robin conditions $m(u) \pdnu \lambda_{u} = a(g - \lambda_{u})$ for constant $ a> 0$ and given function $g$ to close the above system.

\begin{remark}[Limiting transmission conditions/Fast reaction limits]
Note that in the above cases, by formally sending $K, M \to 0$, we obtain the transmission conditions
\begin{align*}
u \vert_{\Sigma} = h(\phi), \quad \lambda_{u} \vert_{\Sigma} = \lambda_{\phi} \quad \text{ on } \Sigma.
\end{align*}
\end{remark}
\begin{remark}[Alternate derivation of equations]
To the authors' best knowledge, it appears that the above coupled bulk-surface systems involving Allen--Cahn-type or Cahn--Hilliard-type equations with transmission conditions such as $u \vert_{\Sigma} = \phi$ and $\lambda_{u} \vert_{\Sigma} = \lambda_{\phi}$ have not been derived from the viewpoint of mathematical modelling.  This motivates the current section to provide a derivation of these systems of equations from balance laws.  We are aware that the recent work of Liu and Wu  \cite{LiuWu} also provides a mathematical derivation of a coupled bulk-surface Cahn--Hilliard system (which can obtained as the limit $K \to 0$ of the system \eqref{CHCH} with $h(s) = s$, $m(u) = 1$, $n(\phi) = 1$ and replacing $M \pdnu \lambda_{u} + \lambda_{u} = \lambda_{\phi}$ with $\pdnu \lambda_{u} = 0$ on $\Sigma$ as a boundary condition).  This is done by means of an energetic variational approach that combines the least action principle and Onsager's principle of maximum energy dissipation.
\end{remark}

\section{Main results}\label{sec:main}
In this paper we focus on the Allen--Cahn/Allen--Cahn system \eqref{ACAC} with possibly non-smooth potentials $W$ and $W_{\Gamma}$.  By this we mean that $W = \hat \beta + \hat \pi$ (resp.~$W_{\Gamma} = \hat \beta_\Gamma + \hat \pi_\Gamma$) is a sum of a proper, convex and lower semicontinuous part $\hat \beta$ (resp.~$\hat \beta_\Gamma$) and a smooth non-convex part $\hat \pi$ (resp.~$\hat \pi_\Gamma$).  We recall that the subdifferential of $\hat \beta : \R \to [0,\infty]$, denoted by $\beta := \pd \hat \beta : \R \to 2^{\R}$, is a set-valued maximal monotone operator \cite{Barbu,Brezis,GL} defined as
\begin{align*}
\beta(x) = \pd \hat \beta(x) = \{ \xi \in \R \, : \, \hat \beta(y) - \hat \beta(x) \geq (\xi, y - x) \, \text{ for all } y \in \R \}.
\end{align*}
Furthermore, we introduce the effective domain of $\beta$, denoted by $D(\beta)$, as $D(\beta) := \{ r \in \R \, : \beta(r) \neq \emptyset \}$, which can be different from the whole real line, and denote by $\beta^{\circ}(x)$ the (unique) minimal element of the set $\beta(x)$ satisfying $\abs{\beta^{\circ}(x)} = \inf_{z \in \beta(x)} \abs{z}$.  In light of the possible non-smoothness of the potentials, the system \eqref{ACAC} should be expressed as

\begin{subequations}\label{ACAC:gen}
\begin{alignat}{3}
\pd_{t} u  = \Lap u - \xi - \pi(u) + f, \quad \xi \in \beta(u) & \text{ in } Q, \label{ACAC:bulk} \\
\pd_{t} \phi  = \LB \phi - \xi_\Gamma - \pi_{\Gamma}(\phi) - h'(\phi) \pdnu u + f_{\Gamma}, \quad \xi_\Gamma \in \beta_\Gamma(\phi) & \text{ on } \Sigma, \label{ACAC:surf} \\
K \pdnu u + u = h(\phi) & \text{ on } \Sigma, \label{ACAC:Robin} \\
u(0) = u_{0} \text{ in } \Omega, \quad \phi(0)  = \phi_{0} & \text{ on } \Gamma, \label{ACAC:ini}
\end{alignat}
\end{subequations}
where $f: Q \to \R$, $f_{\Gamma} : \Sigma \to \R$, $u_{0} : \Omega \to \R$, $\phi_{0} : \Gamma \to \R$ are given functions, while $\xi$ and $\xi_\Gamma$ are selections from the sets $\beta(u)$ and $\beta_\Gamma(\phi)$, respectively.

Let us state the assumptions:
\begin{enumerate}[label=$(\mathrm{A \arabic*})$, ref = $\mathrm{A \arabic*}$]
\item \label{ass:domain} $\Omega \subset \R^{3}$ is a bounded domain with smooth boundary $\Gamma$.
\item \label{ass:h} The function $h \in C^{2}(\R)$ satisfies $h', h'' \in L^{\infty}(\R)$.
\item \label{ass:beta} $\beta$ and $\beta_{\Gamma}$ are maximal monotone graphs on $\R \times \R$ with effective domains $D(\beta)$ and $D(\beta_{\Gamma})$, respectively, and are the subdifferentials $\beta = \pd \hat{\beta}$, $\beta_{\Gamma} = \pd \hat{\beta}_{\Gamma}$ of some proper, lower semicontinuous and convex functions $\hat{\beta}, \hat{\beta}_{\Gamma} : \R \to [0,\infty]$ with $\hat{\beta}(0) = 0$, $\hat{\beta}_{\Gamma}(0) = 0$.  Furthermore, for all $\delta > 0$ there exists $C_{\delta} > 0$ such that
\begin{align*}
\abs{\xi} \leq \delta \, u \, \xi + C_{\delta} \text{ for all } \xi \in \beta(u).
\end{align*}
\item \label{ass:pi} $\pi, \pi_{\Gamma} : \R \to \R$ are Lipschitz continuous (with Lipschitz constants $L_{\pi}$ and $L_{\pi_{\Gamma}}$, respectively) and  their anti-derivatives satisfy $\hat{\pi}(s), \hat{\pi}_{\Gamma}(s) \geq 0$ for all $s \in \R$.
\item \label{ass:f} $f \in H^{1}(0,T; L^{2}(\Omega))$, $f_{\Gamma} \in H^{1}(0,T; L^{2}(\Gamma))$ for any $T \in (0,\infty)$.
\item  \label{ass:ini} The initial data satisfy $u_{0} \in H^{2}(\Omega)$ with $\beta^{\circ}(u_{0}) \in L^{2}(\Omega)$ and $\phi_{0} \in H^{2}(\Gamma)$ with $\beta_{\Gamma}^{\circ}(\phi_{0}) \in L^{2}(\Gamma)$.  In addition, the compatibility condition $K \pdnu u_{0} + u_{0} = h(\phi_{0})$ holds on $\Gamma$.
\end{enumerate}

\begin{remark}\label{rem:initialdata}
Note that $\beta$ and $\beta_{\Gamma}$ induce maximal monotone operators on $L^{2}(\Omega)$ and $L^{2}(\Gamma)$, respectively.  These operators are characterized by the pointwise inclusion in the following sense:
\begin{align*}
\xi \in L^{2}(\Omega) \text{ with } \xi \in \beta(u) \text{ for } u \in L^{2}(\Omega) \Longleftrightarrow \xi(x) \in \beta(u(x)) \text{ for a.e. } x \in \Omega.
\end{align*}
Furthermore, as $L^{2}(\Omega)$ is a Hilbert space, there exists a (unique) minimal element $\beta^{\circ}(u_{0})$, due to projections in Hilbert spaces, that satisfies $\norm{\beta^{\circ}(u_{0})}_{L^{2}(\Omega)} := \inf_{z \in \beta(u_{0})} \norm{z}_{L^{2}(\Omega)}$, and such minimal elements always exist.   Let us also point out that by the definition of subdifferential, the assertions $\beta^{\circ}(u_{0}) \in L^{2}(\Omega)$ and $\hat{\beta}(0) = 0$ immediately imply $\hat{\beta}(u_{0}) \in L^{1}(\Omega)$.  This is thanks to the relation
\begin{align*}
0 \leq \int_{\Omega} \hat{\beta}(u_{0}) \dx \leq \int_{\Omega} \beta^{\circ}(u_{0}) u_{0} \, dx < \infty.
\end{align*}
Analogous assertions also hold for $\beta_{\Gamma}^{\circ}(\phi_{0})$ and $\hat{\beta}_{\Gamma}(\phi_{0})$.  
\end{remark}

\subsection{Strong well-posedness}

Our first result concerns the strong existence of solutions to \eqref{ACAC:gen}. 
\begin{thm}[Strong existence]\label{thm:Exist}
For any $T > 0$, under assumptions \eqref{ass:domain}-\eqref{ass:ini} there exists a quadruple $(u,\phi, \xi, \xi_{\Gamma})$ with
\begin{align*}
u & \in L^{\infty}(0,T;H^{2}(\Omega)), \quad \pdnu u \in H^{1}(0,T;L^{2}(\Gamma)), \\
\pd_{t} u & \in L^{\infty}(0,T;L^{2}(\Omega)) \cap L^{2}(0,T;H^{1}(\Omega)), \quad \pd_{t}u \vert_{\Sigma} \in L^{2}(0,T;L^{2}(\Gamma)), \\
\xi & \in L^{\infty}(0,T;L^{2}(\Omega)), \quad \xi \in \beta(u) \text{ a.e. in } \Omega,\\
\phi & \in L^{\infty}(0,T;H^{2}(\Gamma)), \\
\pd_{t} \phi & \in L^{\infty}(0,T;L^{2}(\Gamma)) \cap L^{2}(0,T;H^{1}(\Gamma)), \\
\xi_{\Gamma} & \in L^{\infty}(0,T;L^{2}(\Gamma)), \quad \xi_{\Gamma} \in \beta_{\Gamma}(\phi) \text{ a.e. on } \Sigma,
\end{align*}
and satisfies \eqref{ACAC:bulk} a.e in $Q$ and \eqref{ACAC:surf}, \eqref{ACAC:Robin} a.e.~on $\Sigma$ and also \eqref{ACAC:ini}.
\end{thm}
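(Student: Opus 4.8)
The plan is to construct $(u,\phi,\xi,\xi_\Gamma)$ as a limit of solutions to a two-level approximation of \eqref{ACAC:gen}. At the outer level I replace the maximal monotone graphs $\beta,\beta_\Gamma$ by their Yosida approximations $\beta_\lambda,\beta_{\Gamma,\lambda}$ (with Moreau--Yosida regularizations $\hat\beta_\lambda,\hat\beta_{\Gamma,\lambda}$), which are everywhere-defined, monotone and globally Lipschitz; crucially, the growth bound in \eqref{ass:beta} is inherited by $\beta_\lambda$, since $\beta_\lambda(r)\in\beta(J_\lambda r)$ with $J_\lambda r\,\beta_\lambda(r)\le r\,\beta_\lambda(r)$ for the resolvent $J_\lambda$. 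At the inner level, for fixed $\lambda$ the system \eqref{ACAC:gen} is semilinear with Lipschitz nonlinearities (the coupling $h'(\phi)\pdnu u$ has at most linear growth by \eqref{ass:h}), so existence of an approximate solution follows from a Faedo--Galerkin scheme, or equivalently from the abstract theory of evolution equations governed by a subdifferential perturbed by a Lipschitz operator on $L^2(\Omega)\times L^2(\Gamma)$. The variational formulation pairs \eqref{ACAC:bulk} tested against $\zeta\in H^1(\Omega)$ with \eqref{ACAC:surf} tested against $\zeta_\Gamma\in H^1(\Gamma)$, the Robin condition \eqref{ACAC:Robin} entering through the boundary integral $K^{-1}\int_\Gamma(u-h(\phi))(\zeta\vert_\Gamma-h'(\phi)\zeta_\Gamma)\dHaus$.

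The heart of the proof is a chain of a priori estimates uniform in $\lambda$ (and in the inner parameter). First I reproduce the energy identity \eqref{ACAC:Energy} by testing \eqref{ACAC:bulk} with $\pd_t u$ and \eqref{ACAC:surf} with $\pd_t\phi$ and summing; with \eqref{ass:pi} and \eqref{ass:f} this gives $u\in L^\infty(0,T;H^1(\Omega))$, $\phi\in L^\infty(0,T;H^1(\Gamma))$, $\pd_t u\in L^2(0,T;L^2(\Omega))$, $\pd_t\phi\in L^2(0,T;L^2(\Gamma))$, together with control of $\hat\beta_\lambda(u)$, $\hat\beta_{\Gamma,\lambda}(\phi)$ and of the penalty $K^{-1}\| u-h(\phi) \|_{L^2(\Gamma)}^2$. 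Next I differentiate the equations in time and test with $\pd_t u,\pd_t\phi$: monotonicity of $\beta_\lambda,\beta_{\Gamma,\lambda}$ gives the $\pd_t\xi$-terms a favorable sign, the Lipschitz bounds on $\pi,\pi_\Gamma$ and the bounds $h',h''\in L^\infty(\R)$ of \eqref{ass:h} control the differentiated coupling $h''(\phi)\pd_t\phi\,\pdnu u+h'(\phi)\pd_t(\pdnu u)$ (whose boundary and penalty contributions combine to a perfect derivative as in \eqref{ACAC:Energy}), and $f,f_\Gamma\in H^1(0,T;L^2)$ supplies the forcing; here the $H^2$-regularity and the Robin compatibility of the initial data in \eqref{ass:ini} are used precisely to bound $\pd_t u(0),\pd_t\phi(0)$ in $L^2$. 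This upgrades the time derivatives to $\pd_t u\in L^\infty(0,T;L^2(\Omega))\cap L^2(0,T;H^1(\Omega))$ and $\pd_t\phi\in L^\infty(0,T;L^2(\Gamma))\cap L^2(0,T;H^1(\Gamma))$, and through \eqref{ACAC:Robin} yields $\pdnu u\in H^1(0,T;L^2(\Gamma))$.

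With the time derivatives bounded I then estimate the selections and recover the elliptic regularity. On the surface, testing \eqref{ACAC:surf} with $\beta_{\Gamma,\lambda}(\phi)$ produces $\int_\Gamma\beta_{\Gamma,\lambda}'(\phi)\abs{\surf\phi}^2\ge0$ with no boundary contribution, as $\Gamma$ is closed; hence $\xi_\Gamma=\beta_{\Gamma,\lambda}(\phi)$ is bounded in $L^\infty(0,T;L^2(\Gamma))$ and Laplace--Beltrami regularity gives $\phi\in L^\infty(0,T;H^2(\Gamma))$. In the bulk, testing \eqref{ACAC:bulk} with $\beta_\lambda(u)$ generates the boundary term $\int_\Gamma\pdnu u\,\beta_\lambda(u)=K^{-1}\int_\Gamma(h(\phi)-u)\beta_\lambda(u)\dHaus$, which I dominate by invoking the growth condition \eqref{ass:beta} (inherited by $\beta_\lambda$) to absorb the part linear in the selection against $u\,\beta_\lambda(u)$; this bounds $\xi=\beta_\lambda(u)$ in $L^\infty(0,T;L^2(\Omega))$, and elliptic regularity for the Robin problem gives $u\in L^\infty(0,T;H^2(\Omega))$.

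Finally I pass to the limit, first in the inner (Galerkin) parameter and then $\lambda\to0$. The uniform bounds and Aubin--Lions compactness yield $u_\lambda\to u$ strongly in $L^2(0,T;H^1(\Omega))$ and a.e., $\phi_\lambda\to\phi$ strongly in $L^2(0,T;H^1(\Gamma))$ and a.e., $\pdnu u_\lambda\to\pdnu u$ strongly in $L^2(\Sigma)$, with weak-$*$ limits realizing the stated regularity and weak limits $\xi,\xi_\Gamma$ of the selections. Strong convergence identifies the Lipschitz terms $\pi(u),\pi_\Gamma(\phi)$ and the coupling $h'(\phi)\pdnu u$, while $\xi\in\beta(u)$ and $\xi_\Gamma\in\beta_\Gamma(\phi)$ follow from the standard maximal-monotone limit argument, using $J_\lambda u_\lambda=u_\lambda-\lambda\xi_\lambda\to u$ and $\limsup\int\xi_\lambda u_\lambda\le\int\xi u$. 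I expect the main obstacle to be the bulk selection estimate: unlike \cite{CalaColli,ColliFukaoAC}, here $u\vert_\Gamma$ is not the trace-coupled partner of $\phi$ and no compatibility between $\beta$ and $\beta_\Gamma$ is available, so the boundary term $K^{-1}\int_\Gamma(h(\phi)-u)\beta_\lambda(u)\dHaus$ cannot be controlled through $\beta_{\Gamma,\lambda}(\phi)$; closing this estimate is exactly what forces the growth assumption in \eqref{ass:beta}. A secondary delicate point is securing the $t=0$ bounds for the time-differentiated estimate, for which the $H^2$-regularity and the Robin compatibility of the initial data in \eqref{ass:ini} are indispensable.
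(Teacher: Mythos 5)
Your proposal is correct and follows essentially the same route as the paper's proof: Yosida regularization combined with a Galerkin scheme, the energy estimate, the time-differentiated estimate (using the compatibility condition and $H^2$-regularity in \eqref{ass:ini} to bound $\pd_{t}u(0)$, $\pd_{t}\phi(0)$), the surface-before-bulk selection estimates in which \eqref{ass:beta}, transferred to the Yosida approximation through the resolvent, absorbs the Robin boundary term against $\int_{\Gamma} u \, \beta_{\lambda}(u) \dHaus$, and the two-stage limit passage with the standard maximal-monotone identification. The only points the paper treats more carefully are that the differentiated coupling does not cancel exactly but leaves the remainder $K^{-1}\int_{\Gamma} h''(\phi)(h(\phi)-u)\abs{\pd_{t}\phi}^{2}\dHaus$, which is handled via the Gagliardo--Nirenberg interpolation \eqref{GNineq} absorbed into $\norm{\surf \pd_{t}\phi}_{L^{2}(\Gamma)}^{2}$, and that the nonlinear test functions $\beta_{\lambda}(u)$, $\beta_{\Gamma,\lambda}(\phi)$ are not admissible at the Galerkin level, so the paper first passes $n \to \infty$ at fixed regularization parameter and only then derives the selection bounds from the resulting variational formulation.
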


We mention that due to the the embedding $L^{\infty}(0,T;H^{2}(\Omega)) \cap H^{1}(0,T;L^{2}(\Omega)) \subset C^{0}([0,T];H^{r}(\Omega))$ for any $r < 2$ and the trace theorem, the normal derivative $\pdnu u$ on $\Gamma$ is continuous up to initial time, and thus the initial condition $u_{0}$ has to satisfy the compatibility condition outlined in \eqref{ass:ini}.

Our second result is the continuous dependence on the data.
\begin{thm}[Continuous dependence]\label{thm:Ctsdep}
Under assumptions \eqref{ass:domain}-\eqref{ass:ini}, let $\{(u_{i}, \phi_{i})\}_{i=1,2}$ denote strong solutions to \eqref{ACAC:gen} corresponding to data $\{(u_{0,i}, \phi_{0,i}, f_{i}, f_{\Gamma,i})\}_{i=1,2}$.  Then, there exists a positive constant $C$, depending on $\Gamma$, $\norm{\pdnu u_{i}}_{L^{\infty}(0,T;L^{2}(\Gamma))}$, the Lipschitz constants of $\pi$, $\pi_{\Gamma}$, $h'$ and $h''$, the fixed time $T$, and $K$, such that
\begin{equation}
\begin{aligned}
& \norm{u_{1} - u_{2}}_{L^{\infty}(0,T;L^{2}(\Omega)) \cap L^{2}(0,T;H^{1}(\Omega))} + \norm{\phi_{1} - \phi_{2}}_{L^{\infty}(0,T;L^{2}(\Gamma)) \cap L^{2}(0,T;H^{1}(\Gamma))} \\
& \quad \leq C \left ( \norm{u_{0,1} - u_{0,2}}_{L^{2}(\Omega)} + \norm{\phi_{0,1} - \phi_{0,2}}_{L^{2}(\Gamma)} + \norm{f_{1} - f_{2}}_{L^{2}(Q)} + \norm{f_{\Gamma,1} - f_{\Gamma,2}}_{L^{2}(\Sigma)} \right ).
\end{aligned}
\end{equation}
\end{thm}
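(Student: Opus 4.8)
The plan is to take the difference of the two systems and run an energy estimate. Write $u := u_{1} - u_{2}$, $\phi := \phi_{1} - \phi_{2}$, $f := f_{1} - f_{2}$, $f_{\Gamma} := f_{\Gamma,1} - f_{\Gamma,2}$, with analogous notation for the initial data and for the selections $\xi := \xi_{1} - \xi_{2} \in \beta(u_{1}) - \beta(u_{2})$, $\xi_{\Gamma} := \xi_{\Gamma,1} - \xi_{\Gamma,2}$. I would test the difference of \eqref{ACAC:bulk} with $u$ in $L^{2}(\Omega)$ and the difference of \eqref{ACAC:surf} with $\phi$ in $L^{2}(\Gamma)$, using the regularity furnished by Theorem \ref{thm:Exist} to justify the integrations by parts and the identity $\int_{\Omega} \pd_{t} u \, u \dx = \tfrac{1}{2} \frac{\dd}{\dt} \norm{u}_{L^{2}(\Omega)}^{2}$. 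Integration by parts produces $\norm{\nabla u}_{L^{2}(\Omega)}^{2}$ and $\norm{\surf \phi}_{L^{2}(\Gamma)}^{2}$ (no boundary contribution from $\LB$ since $\Gamma$ is closed), the monotone terms $\int_{\Omega} \xi u \dx \geq 0$ and $\int_{\Gamma} \xi_{\Gamma} \phi \dHaus \geq 0$ by monotonicity of $\beta, \beta_{\Gamma}$, and the perturbation terms bounded by $L_{\pi} \norm{u}_{L^{2}(\Omega)}^{2}$ and $L_{\pi_{\Gamma}} \norm{\phi}_{L^{2}(\Gamma)}^{2}$.

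The heart of the matter is the two coupling contributions: the boundary term $-\int_{\Gamma} \pdnu u \, u \dHaus$ from the bulk equation and $\int_{\Gamma} (h'(\phi_{1}) \pdnu u_{1} - h'(\phi_{2}) \pdnu u_{2}) \phi \dHaus$ from the surface equation. I would eliminate the normal derivatives through the difference of \eqref{ACAC:Robin}, namely $\pdnu u = K^{-1}(h(\phi_{1}) - h(\phi_{2}) - u)$ on $\Gamma$. The bulk boundary term then supplies the favorable dissipative term $+ K^{-1} \norm{u}_{L^{2}(\Gamma)}^{2}$ together with $-K^{-1} \int_{\Gamma} (h(\phi_{1}) - h(\phi_{2})) u \dHaus$, the latter controlled via $\norm{h'}_{L^{\infty}}$ and Young's inequality by $\tfrac{1}{2K} \norm{u}_{L^{2}(\Gamma)}^{2}$ plus a multiple of $\norm{\phi}_{L^{2}(\Gamma)}^{2}$. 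For the surface term I would use the splitting $h'(\phi_{1}) \pdnu u_{1} - h'(\phi_{2}) \pdnu u_{2} = h'(\phi_{1}) \pdnu u + (h'(\phi_{1}) - h'(\phi_{2})) \pdnu u_{2}$; inserting the Robin identity into the first piece again yields terms controllable by $\norm{h'}_{L^{\infty}}$ and Young's inequality, with all the $\norm{u}_{L^{2}(\Gamma)}^{2}$ contributions absorbed into the reserve $K^{-1} \norm{u}_{L^{2}(\Gamma)}^{2}$ and the remaining $\norm{\phi}_{L^{2}(\Gamma)}^{2}$ contributions deferred to a Gronwall coefficient (these constants depend on $K$, which the statement permits).

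The main obstacle, and the point where the hypotheses on $h$ and the boundedness of $\pdnu u_{i}$ genuinely enter, is the leftover term $\int_{\Gamma} (h'(\phi_{1}) - h'(\phi_{2})) \pdnu u_{2} \, \phi \dHaus$. Using $\abs{h'(\phi_{1}) - h'(\phi_{2})} \leq \norm{h''}_{L^{\infty}} \abs{\phi}$ and Hölder's inequality, this is bounded by $\norm{h''}_{L^{\infty}} \norm{\phi}_{L^{4}(\Gamma)}^{2} \norm{\pdnu u_{2}}_{L^{2}(\Gamma)}$; I would then invoke the two-dimensional Ladyzhenskaya/Gagliardo--Nirenberg inequality $\norm{\phi}_{L^{4}(\Gamma)}^{2} \leq C \norm{\phi}_{L^{2}(\Gamma)} \norm{\phi}_{H^{1}(\Gamma)}$ on the closed surface $\Gamma$, followed by Young's inequality, to absorb a fraction of $\norm{\surf \phi}_{L^{2}(\Gamma)}^{2}$ into the left-hand side and leave a term of the form $C \norm{\pdnu u_{2}}_{L^{2}(\Gamma)}^{2} \norm{\phi}_{L^{2}(\Gamma)}^{2}$. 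This is exactly why the constant $C$ must depend on $\norm{\pdnu u_{i}}_{L^{\infty}(0,T;L^{2}(\Gamma))}$, which Theorem \ref{thm:Exist} guarantees to be finite, and why this term cannot be handled with a uniform-in-data constant.

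Collecting all estimates yields a differential inequality of the shape $\frac{\dd}{\dt} ( \norm{u}_{L^{2}(\Omega)}^{2} + \norm{\phi}_{L^{2}(\Gamma)}^{2} ) + \norm{\nabla u}_{L^{2}(\Omega)}^{2} + \tfrac{1}{2} \norm{\surf \phi}_{L^{2}(\Gamma)}^{2} \leq C ( \norm{u}_{L^{2}(\Omega)}^{2} + \norm{\phi}_{L^{2}(\Gamma)}^{2} ) + \norm{f}_{L^{2}(\Omega)}^{2} + \norm{f_{\Gamma}}_{L^{2}(\Gamma)}^{2}$ with $C$ bounded on $(0,T)$ and depending only on the quantities listed in the statement. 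Gronwall's lemma then delivers the $L^{\infty}(0,T;L^{2})$ bounds for $u$ and $\phi$, and integrating the gradient terms in time afterwards supplies the $L^{2}(0,T;H^{1})$ bounds, which together give the asserted estimate.
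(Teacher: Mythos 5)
Your proposal is correct and follows essentially the same route as the paper's proof: testing the differenced bulk and surface equations with $\hat{u}$ and $\hat{\phi}$, eliminating the normal derivatives via the differenced Robin condition, isolating the critical term $\int_{\Gamma} (h'(\phi_{1}) - h'(\phi_{2})) \pdnu u_{2} \, \hat{\phi} \dHaus$ and controlling it through the Lipschitz bound on $h'$, the two-dimensional $L^{4}(\Gamma)$ interpolation inequality, and the bound $\pdnu u_{2} \in L^{\infty}(0,T;L^{2}(\Gamma))$, before concluding with Gronwall. No gaps; your absorption of the boundary terms and the identified dependence of the constant on $\norm{\pdnu u_{i}}_{L^{\infty}(0,T;L^{2}(\Gamma))}$ match the paper exactly.
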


As a consequence of Theorem \ref{thm:Ctsdep}, the strong solution obtained from Theorem \ref{thm:Exist} is unique.

\subsection{Omega-limit set}

The first and second theorems show that there exists a unique strong solution on any finite time interval, which allows us to address the long-time behavior of \eqref{ACAC:gen}.  Our third result deals with the omega-limit set of an arbitrary initial datum $(u_{0}, \phi_{0})$ satisfying \eqref{ass:ini}.  We make the following additional assumptions.

\begin{enumerate}[resume*]
\item \label{ass:long:W} There exist positive constants $c_{1}$ and $c_{2}$ such that
\begin{align*}
\hat{\beta}(s) + \hat{\pi}(s) \geq c_{1} \abs{s}^{2} - c_{2}, \quad \hat{\beta}_{\Gamma}(s) + \hat{\pi}_{\Gamma}(s) \geq c_{1} \abs{s}^{2} - c_{2}.
\end{align*}
\item \label{ass:long:f} In addition to \eqref{ass:f}, the functions $f$ and $f_{\Gamma}$ satisfy
\begin{align*}
f  \in H^{1}(0,\infty;L^{2}(\Omega)), \quad f_{\Gamma} \in H^{1}(0,\infty;L^{2}(\Gamma)).
\end{align*}
\end{enumerate}
We point out that \eqref{ass:long:f} implies $f(t) \to 0$ in $L^{2}(\Omega)$, $f_{\Gamma}(t) \to 0$ in $L^{2}(\Gamma)$ as $t \to \infty$ by virtue of belonging to the Bochner space $H^{1}(0,\infty;X)$ where $X = L^{2}(\Omega)$ or $X = L^{2}(\Gamma)$.

\begin{thm}[Omega-limit set]\label{thm:Lt}
Under assumptions \eqref{ass:domain}-\eqref{ass:long:f}, the omega-limit set
\begin{align*}
\omega := \Big{\{} (u_{\infty}, \phi_{\infty}) & \, : \, \exists \{t_{n}\}_{n \in \N}, \, t_{n} \nearrow \infty \\
& \text{ and } (u(t_{n}), \phi(t_{n})) \to (u_{\infty}, \phi_{\infty}) \text{ weakly in } H^{1}(\Omega) \times H^{1}(\Gamma) \Big{\}}
\end{align*}
is non-empty.  Moreover, if $(u_{\infty}, \phi_{\infty})$ is an element of $\omega$, then $u_{\infty} \in H^{2}(\Omega)$ and $\phi_{\infty} \in H^{2}(\Gamma)$ satisfy
\begin{subequations}\label{stationary}
\begin{alignat}{3}
\Lap u_{\infty} - \pi(u_{\infty}) \in \beta(u_{\infty}) & \text{ a.e. in } \Omega, \\
\LB \phi_{\infty} - \pi_{\Gamma}(\phi_{\infty}) - h'(\phi_{\infty}) \pdnu u_{\infty}  \in \beta_{\Gamma}(\phi_{\infty}) & \text{ a.e. on } \Gamma, \\
K \pdnu u_{\infty} + u_{\infty} = h(\phi_{\infty}) & \text{ a.e. on } \Gamma.
\end{alignat}
\end{subequations}
\end{thm}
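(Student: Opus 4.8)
The plan is to exploit the gradient-flow structure of \eqref{ACAC:gen} together with the coercivity \eqref{ass:long:W} and the decay of the data from \eqref{ass:long:f} to obtain bounds that are uniform in time, and then to characterise $\omega$ through a time-shift argument. First I would test \eqref{ACAC:bulk} with $\pd_t u$ and \eqref{ACAC:surf} with $\pd_t \phi$; integrating by parts and using the Robin condition \eqref{ACAC:Robin} together with the coupling term $h'(\phi)\pdnu u$ reproduces the time derivative of the penalty term in $E$, leading to
\begin{align*}
\frac{\dd}{\dt} E(u,\phi) + \norm{\pd_t u}_{L^2(\Omega)}^2 + \norm{\pd_t \phi}_{L^2(\Gamma)}^2 = \int_\Omega f \, \pd_t u \dx + \int_\Gamma f_\Gamma \, \pd_t \phi \dHaus.
\end{align*}
Absorbing the right-hand side by Young's inequality and integrating over $(0,t)$, the fact that $f, f_\Gamma \in L^2(0,\infty;L^2)$ (a consequence of \eqref{ass:long:f}) yields $\sup_{t \geq 0} E(u(t),\phi(t)) < \infty$ together with $\pd_t u \in L^2(0,\infty;L^2(\Omega))$ and $\pd_t \phi \in L^2(0,\infty;L^2(\Gamma))$. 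The coercivity \eqref{ass:long:W} then upgrades the energy bound to $\sup_{t\geq 0}(\norm{u(t)}_{H^1(\Omega)} + \norm{\phi(t)}_{H^1(\Gamma)}) < \infty$. Given any $t_n \nearrow \infty$, reflexivity of $H^1(\Omega) \times H^1(\Gamma)$ extracts a weakly convergent subsequence, so $\omega$ is non-empty.

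For the characterisation, fix $(u_\infty,\phi_\infty) \in \omega$ with $(u(t_n),\phi(t_n)) \rightharpoonup (u_\infty,\phi_\infty)$ in $H^1(\Omega) \times H^1(\Gamma)$, and introduce the shifts $u_n(s) := u(t_n+s)$, $\phi_n(s) := \phi(t_n+s)$, $\xi_n(s) := \xi(t_n+s)$, $\xi_{\Gamma,n}(s) := \xi_\Gamma(t_n+s)$ for $s \in (0,1)$. The time-global bounds make $u_n$ bounded in $L^\infty(0,1;H^1(\Omega))$ and $\phi_n$ bounded in $L^\infty(0,1;H^1(\Gamma))$, while
\begin{align*}
\int_0^1 \norm{\pd_s u_n}_{L^2(\Omega)}^2 \dd s = \int_{t_n}^{t_n+1} \norm{\pd_t u}_{L^2(\Omega)}^2 \dt \longrightarrow 0
\end{align*}
and likewise for $\phi_n$, so $\pd_s u_n \to 0$ in $L^2(0,1;L^2(\Omega))$ and $\pd_s \phi_n \to 0$ in $L^2(0,1;L^2(\Gamma))$. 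To gain spatial regularity I would freeze $s$ and read \eqref{ACAC:bulk}--\eqref{ACAC:Robin} as a coupled elliptic system with right-hand sides $f(t_n+s) - \pi(u_n) - \pd_s u_n$ and $f_\Gamma(t_n+s) - \pi_\Gamma(\phi_n) - h'(\phi_n)\pdnu u_n$ and Robin datum $h(\phi_n)$. Re-running the elliptic estimates of the existence proof, now time-uniform since the forcing decays by \eqref{ass:long:f} and $h(\phi_n)$ stays bounded in $H^1(\Gamma)$ thanks to $h', h'' \in L^\infty$, bounds $u_n$ in $L^2(0,1;H^2(\Omega))$, $\phi_n$ in $L^2(0,1;H^2(\Gamma))$, and the selections $\xi_n$, $\xi_{\Gamma,n}$ in $L^2(0,1;L^2)$, where the growth hypothesis in \eqref{ass:beta} is exactly what controls $\xi_n$.

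These bounds feed Aubin--Lions: using $H^2 \hookrightarrow\hookrightarrow H^1$ and $\pd_s u_n \in L^2(0,1;L^2)$, we get $u_n \to u_*$ strongly in $L^2(0,1;H^1(\Omega))$ and similarly $\phi_n \to \phi_*$ strongly in $L^2(0,1;H^1(\Gamma))$. Because $\pd_s u_n \to 0$, the bound $\norm{u_n(s) - u_n(0)}_{L^2(\Omega)} \leq s^{1/2}\norm{\pd_s u_n}_{L^2(0,1;L^2)}$ forces $u_*$ to be independent of $s$; combined with the compact embedding $H^1 \hookrightarrow\hookrightarrow L^2$, which turns the weak convergence $u(t_n) = u_n(0) \rightharpoonup u_\infty$ into strong $L^2(\Omega)$ convergence, we identify $u_* = u_\infty$ and analogously $\phi_* = \phi_\infty$; the weak $L^2(0,1;H^2)$ limits place $u_\infty \in H^2(\Omega)$ and $\phi_\infty \in H^2(\Gamma)$. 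Passing to the limit in the shifted equations is now routine: $\pd_s u_n, \pd_s \phi_n \to 0$ remove the time derivatives, $\Lap u_n \rightharpoonup \Lap u_\infty$ and $\LB \phi_n \rightharpoonup \LB \phi_\infty$ weakly, the Lipschitz terms $\pi(u_n), \pi_\Gamma(\phi_n)$ converge strongly, and since $h'(\phi_n) \to h'(\phi_\infty)$ strongly in $L^2$ while remaining bounded in $L^\infty$ (as $h' \in L^\infty$), the coupling $h'(\phi_n)\pdnu u_n$ converges weakly to $h'(\phi_\infty)\pdnu u_\infty$ against the weak limit $\pdnu u_n \rightharpoonup \pdnu u_\infty$. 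The Robin identity passes to $K\pdnu u_\infty + u_\infty = h(\phi_\infty)$, and for the graphs, $\xi_n \rightharpoonup \xi_\infty$, $\xi_{\Gamma,n} \rightharpoonup \xi_{\Gamma,\infty}$ weakly in $L^2$ combined with the strong convergence of $u_n, \phi_n$ and the demiclosedness of the maximal monotone operators induced by $\beta, \beta_\Gamma$ give $\xi_\infty \in \beta(u_\infty)$ and $\xi_{\Gamma,\infty} \in \beta_\Gamma(\phi_\infty)$. Hence $(u_\infty,\phi_\infty)$ solves \eqref{stationary}.

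The main obstacle is the time-uniformity in the second step: unlike the finite-horizon estimates behind Theorem \ref{thm:Exist}, the bounds on $(u_n,\phi_n,\xi_n,\xi_{\Gamma,n})$ must not deteriorate as $t_n \to \infty$, which is the precise role of \eqref{ass:long:W} (to trap the energy) and \eqref{ass:long:f} (to make the forcing vanish in the limit). A secondary subtlety is that, because the bulk and surface are coupled through the Robin boundary term rather than a trace identity, the elliptic estimate for $u_n$ has to be run with the boundary datum $h(\phi_n)$ and the selection $\xi_n$ simultaneously, which is where the growth condition in \eqref{ass:beta} enters.
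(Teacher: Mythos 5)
Your route is the paper's route in outline: the energy inequality combined with \eqref{ass:long:W}--\eqref{ass:long:f} gives \eqref{Lt:1} and non-emptiness, and the characterisation of $\omega$ proceeds via time-shifts, uniform elliptic estimates, compactness, and demiclosedness of the graphs; your Aubin--Lions variant of the compactness step (constancy in $s$ from $\pd_{s}u_{n}\to 0$, identification with $u_{\infty}$) is a legitimate, somewhat more elementary alternative to the paper's $C^{0}([0,T];W^{1,p})$ convergence. The gaps are in how you obtain the uniform $H^{2}$ and selection bounds. First, you run the elliptic estimates directly on the non-smooth shifted system, in effect testing with the selections $\xi_{n}\in\beta(u_{n})$ and $\xi_{\Gamma,n}\in\beta_{\Gamma}(\phi_{n})$, which are merely $L^{2}$ functions; the integration by parts producing the sign-definite terms $\int_{\Omega}\beta_{\eps}'(u)\abs{\nabla u}^{2}\dx\geq 0$ is then unavailable. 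This is precisely why the paper re-regularises: it works with the Yosida system \eqref{Lt:system:eps}, where $\beta_{\eps}(u^{\eps}_{k})$ and $\beta_{\Gamma,\eps}(\phi^{\eps}_{k})$ are admissible Lipschitz test functions, and identifies the $\eps\to 0$ limit with the shifted solution through the uniqueness provided by Theorem \ref{thm:Ctsdep}. This gap is repairable by following that scheme.

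The more serious gap is that your purely $L^{2}$-in-time framework cannot close the bulk estimate, so skipping the analogue of Lemma \ref{lem:unif:time} is not an option. The estimate for the bulk selection produces the Robin boundary term $K^{-1}\int_{\Gamma}h(\phi_{n})\,\beta_{\eps}(u_{n})\dHaus$, and the only control on the trace of $\beta_{\eps}(u_{n})$ is assumption \eqref{ass:beta}, which gives $\norm{\beta_{\eps}(u_{n})}_{L^{1}(\Gamma)}\leq\delta\int_{\Gamma}u_{n}\beta_{\eps}(u_{n})\dHaus+C_{\delta}\abs{\Gamma}$, the first term being absorbed into the nonnegative term $K^{-1}\int_{\Gamma}u_{n}\beta_{\eps}(u_{n})\dHaus$. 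This absorption requires $\delta\,\norm{h(\phi_{n}(s))}_{L^{\infty}(\Gamma)}\leq 1$ for one \emph{fixed} $\delta$, since nothing controls $C_{\delta}$ as $\delta\to 0$; in other words it requires a time-uniform $L^{\infty}(\Gamma)$ bound on $h(\phi_{n})$. Your claim that it suffices that ``$h(\phi_{n})$ stays bounded in $H^{1}(\Gamma)$'' does not deliver this: $H^{1}(\Gamma)$ does not embed into $L^{\infty}(\Gamma)$ on a two-dimensional surface (an $H^{1/2}(\Gamma)$ Robin datum would suffice for the $H^{2}(\Omega)$ regularity alone, but not for bounding $\xi_{n}$). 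The paper obtains $\norm{h(\phi^{\eps}_{k})}_{L^{\infty}(0,T;L^{\infty}(\Gamma))}\leq C$ from the surface estimate \eqref{Lt:5}, i.e.\ $\phi^{\eps}_{k}$ bounded in $L^{\infty}(0,T;H^{2}(\Gamma))$, which must be established \emph{before} the bulk estimate; and that surface estimate in turn needs $\pd_{t}\phi^{\eps}_{k}$ bounded in $L^{\infty}(0,T;L^{2}(\Gamma))$, which is exactly the content of the time-differentiated estimates of Lemma \ref{lem:unif:time}, derived at the Galerkin level on $(0,\infty)$ and then restricted to the shifted intervals. With only $\pd_{s}\phi_{n}\in L^{2}(0,1;L^{2}(\Gamma))$, as in your proposal, the norm $\norm{\phi_{n}(s)}_{H^{2}(\Gamma)}$ is merely square-integrable in $s$, the parameter $\delta$ would have to depend on $s$, the constants $C_{\delta(s)}$ are uncontrolled, and the claimed $L^{2}(0,1;L^{2}(\Omega))$ bound on $\xi_{n}$ does not follow. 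To repair the argument you need the $L^{\infty}$-in-time bounds on $\pd_{t}u$ and $\pd_{t}\phi$, and the ordering surface-$H^{2}$-estimate first, bulk estimate second.
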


\subsection{Weak well-posedness to limit problem}

A natural question is whether the solutions to \eqref{ACAC:gen} converge in the limit as $K \to 0$.  We expect that in the limit the transmission condition
\begin{align*}
u \vert_{\Sigma} = h(\phi)  \text{ on } \Sigma
\end{align*}
will be obtained.  Our fourth result provides a positive answer for the case where $h$ is an affine linear function.

\begin{thm}[Weak solutions to limit problem]\label{thm:limitK}
In addition to assumptions \eqref{ass:domain}-\eqref{ass:ini}, suppose further that
\begin{enumerate}[resume*, leftmargin=*]
\item \label{Kto0:h} $h$ is affine linear, i.e., $h(s) = \alpha s + \eta$ for $\alpha \neq 0$, $\eta \in \R$.
\item \label{Kto0:beta} $\beta = \hat{\beta}', \beta_{\Gamma} = \hat{\beta}_{\Gamma}'$ are continuous, monotone, single-valued functions, and there exist positive constants $C_{1}, \dots, C_{4}$, such that for some $p \leq 5$, $q < \infty$ and for all $s \in \R$,
\begin{align*}
\abs{\beta(s)} \leq C_{1} \abs{s}^{p} + C_{2}, \quad \abs{\beta_{\Gamma}(s)} \leq C_{3} \abs{s}^{q} + C_{4}.
\end{align*}
\end{enumerate}
For each $K > 0$, let $(u_{K}, \phi_{K})$ denote a strong solution to \eqref{ACAC:gen} with data $(u_{0,K}, \phi_{0,K}, f_{K}, f_{\Gamma,K})$ satisfying \eqref{ass:f}, \eqref{ass:ini} and
\begin{equation}\label{Klim:ass}
\begin{aligned}
f_{K} \to f \text{ in } L^{2}(Q), & \quad f_{\Gamma,K} \to f_{\Gamma}  \text{ in } L^{2}(\Sigma), \\
u_{0,K} \to u_{0}  \text{ in } H^{1}(\Omega), & \quad \phi_{0,K} \to \phi_{0}  \text{ in } L^{2}(\Gamma) \\
\qquad \text{ such that } u_{0} = h(\phi_{0}) & \text{ and } \norm{u_{0,K} - h(\phi_{0,K})}_{L^{2}(\Gamma)}^{2} \leq C K,
\end{aligned}
\end{equation}
where $C$ is a positive constant independent of $K$.  Then, for any $T \in (0,\infty)$, there exists a pair of functions $(u,\phi)$ such that
\begin{align*}
u_{K} & \to u \text{ weakly-* in } L^{\infty}(0,T;H^{1}(\Omega)) \cap H^{1}(0,T;L^{2}(\Omega)), \\
\phi_{K} & \to \phi \text{ weakly-* in } L^{\infty}(0,T;H^{1}(\Gamma)) \cap H^{1}(0,T;L^{2}(\Gamma)), \\
u_{K} - h(\phi_{K}) & \to 0 \text{ strongly in } L^{2}(0,T;L^{2}(\Gamma)),
\end{align*}
and satisfies for all $\zeta \in H^{1}(\Omega)$ such that $\zeta_{\Gamma} := \zeta \vert_{\Gamma} \in H^{1}(\Gamma)$, and for a.e. $t \in (0,T)$,
\begin{equation}\label{Kto0:weakform}
\begin{aligned}
0 & = \int_{\Omega} \pd_{t} u(t) \, \zeta + \nabla u(t) \cdot \nabla \zeta + \beta(u(t)) \, \zeta + \pi(u(t)) \, \zeta - f(t) \, \zeta \dx \\
& \quad + \int_{\Gamma} \frac{1}{\alpha} \left ( \pd_{t}\phi(t) \, \zeta_{\Gamma} + \surf \phi(t) \cdot \surf \zeta_{\Gamma} + \beta_{\Gamma}(\phi(t)) \, \zeta_{\Gamma} + \pi_{\Gamma}(\phi(t)) \, \zeta_{\Gamma} - f_{\Gamma}(t) \, \zeta_{\Gamma} \right ) \dHaus .
\end{aligned}
\end{equation}
Furthermore, it holds that
\begin{align*}
& \beta(u) \in L^{\infty}(0,T;L^{\frac{6}{p}}(\Omega)), \quad \beta_{\Gamma}(\phi) \in L^{\infty}(0,T;L^{s}(\Gamma)), \\
& u \vert_{\Sigma} = h(\phi) = \alpha \phi + \eta \quad \text{ a.e. on } \Sigma,
\end{align*}
for any $s \in [1,\infty)$ and $p$ is the exponent in \eqref{Kto0:beta}.  

\bigskip

For maximal monotone graphs $\beta$ and $\beta_{\Gamma}$, an analogous result also holds if we replace assumption \eqref{Kto0:beta} with
\begin{enumerate}[resume*]
\item \label{Kto0:maxmono} For some $p < 5$ and $q < \infty$, there exist positive constants $C_{1}, \dots, C_{4}$ such that
\begin{align*}
\abs{\xi} \leq C_{1} \abs{u}^{p} + C_{2}, \quad \abs{\xi_{\Gamma}} \leq C_{3} \abs{\phi}^{q} + C_{4}
\end{align*}
for all $\xi \in \beta(u)$ and $\xi_{\Gamma} \in \beta_{\Gamma}(\phi)$.
\end{enumerate}
Then, in \eqref{Kto0:weakform} we replace $\beta(u)$ with $\xi$ and $\beta_{\Gamma}(\phi)$ with $\xi_{\Gamma}$.
\end{thm}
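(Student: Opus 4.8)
The plan is to derive uniform-in-$K$ a priori estimates from an energy identity, extract weak limits, use the affine structure of $h$ to remove the uncontrolled normal derivative from the coupling, and finally identify the nonlinear limits. First I would obtain the energy identity by testing \eqref{ACAC:bulk} with $\pd_{t} u_{K}$ and \eqref{ACAC:surf} with $\pd_{t}\phi_{K}$. Adding the two and inserting the Robin condition \eqref{ACAC:Robin} in the form $\pdnu u_{K} = K^{-1}(h(\phi_{K}) - u_{K})$, the two boundary contributions combine: since $\frac{d}{dt}(h(\phi_{K}) - u_{K}) = h'(\phi_{K})\pd_{t}\phi_{K} - \pd_{t} u_{K}$, they collapse into $\tfrac{1}{2K}\frac{d}{dt}\norm{u_{K} - h(\phi_{K})}_{L^{2}(\Gamma)}^{2}$, producing
\begin{equation*}
\frac{d}{dt} E(u_{K},\phi_{K}) + \norm{\pd_{t} u_{K}}_{L^{2}(\Omega)}^{2} + \norm{\pd_{t} \phi_{K}}_{L^{2}(\Gamma)}^{2} = \int_{\Omega} (f_{K} - \pi(u_{K})) \pd_{t} u_{K} \dx + \int_{\Gamma} (f_{\Gamma,K} - \pi_{\Gamma}(\phi_{K})) \pd_{t}\phi_{K} \dHaus,
\end{equation*}
with $E$ as in \eqref{Derivation:Energy}. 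Integrating in time, absorbing the time derivatives by Young's inequality, and using $\hat\beta,\hat\beta_{\Gamma},\hat\pi,\hat\pi_{\Gamma}\geq 0$ together with the crucial initial bound $\norm{u_{0,K} - h(\phi_{0,K})}_{L^{2}(\Gamma)}^{2}\leq CK$ from \eqref{Klim:ass} (so the initial penalty energy $\tfrac{1}{2K}\norm{u_{0,K}-h(\phi_{0,K})}_{L^{2}(\Gamma)}^{2}$ stays bounded), I expect uniform bounds for $\nabla u_{K}$, $\surf\phi_{K}$, $\pd_{t} u_{K}$, $\pd_{t}\phi_{K}$ and $K^{-1/2}(u_{K}-h(\phi_{K}))\vert_{\Gamma}$. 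Controlling $\norm{u_{K}}_{L^{2}(\Omega)}$ and $\norm{\phi_{K}}_{L^{2}(\Gamma)}$ by integrating the $\pd_{t}$-bounds from the bounded initial data then upgrades these to uniform bounds in $L^{\infty}(0,T;H^{1}(\Omega))$ and $L^{\infty}(0,T;H^{1}(\Gamma))$; in particular $\norm{u_{K}-h(\phi_{K})}_{L^{2}(\Sigma)}^{2}\leq CKT\to 0$.

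Next I would pass to limits. Banach--Alaoglu gives, along a subsequence, $u_{K}\rightharpoonup u$ weak-* in $L^{\infty}(0,T;H^{1}(\Omega))\cap H^{1}(0,T;L^{2}(\Omega))$ and likewise $\phi_{K}\rightharpoonup\phi$. By Aubin--Lions--Simon, $u_{K}\to u$ and $\phi_{K}\to\phi$ strongly in $C([0,T];L^{2})$ and in $L^{2}(0,T;H^{r})$ for $r<1$, hence a.e., and the traces satisfy $u_{K}\vert_{\Gamma}\to u\vert_{\Gamma}$ strongly in $L^{2}(\Sigma)$. Since $h$ is affine, $h(\phi_{K})=\alpha\phi_{K}+\eta\to h(\phi)$ strongly in $L^{2}(\Sigma)$, so combining with $u_{K}-h(\phi_{K})\to 0$ strongly in $L^{2}(\Sigma)$ identifies the transmission condition $u\vert_{\Sigma}=h(\phi)$ a.e. The growth bounds \eqref{Kto0:beta} (or \eqref{Kto0:maxmono}), together with $u_{K}$ bounded in $L^{\infty}(0,T;L^{6}(\Omega))$ by the 3D Sobolev embedding and $\phi_{K}$ bounded in $L^{\infty}(0,T;L^{s}(\Gamma))$ for all $s<\infty$ by the 2D embedding on $\Gamma$, give $\xi_{K}$ bounded in $L^{\infty}(0,T;L^{6/p}(\Omega))$ and $\xi_{\Gamma,K}$ bounded in $L^{\infty}(0,T;L^{s}(\Gamma))$, hence weak-* convergent to some $\xi,\xi_{\Gamma}$.

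The main obstacle is the coupling through $\pdnu u_{K}$, which is \emph{not} uniformly bounded: by \eqref{ACAC:Robin} it equals $K^{-1}(h(\phi_{K})-u_{K})$, and only $\norm{h(\phi_{K})-u_{K}}_{L^{2}(\Gamma)}\leq C\sqrt{K}$ is known, so $\norm{\pdnu u_{K}}_{L^{2}(\Gamma)}$ may blow up like $K^{-1/2}$. The resolution exploits the affine structure decisively: since $h'(\phi_{K})\equiv\alpha$ is constant, I would test the weak form of \eqref{ACAC:bulk} with $\zeta$ and that of \eqref{ACAC:surf} with $\alpha^{-1}\zeta_{\Gamma}$; the bulk boundary term $+\int_{\Gamma}\pdnu u_{K}\,\zeta_{\Gamma}\dHaus$ then cancels exactly the surface term $-\int_{\Gamma}h'(\phi_{K})\pdnu u_{K}\,\alpha^{-1}\zeta_{\Gamma}\dHaus$. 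Adding the two yields a formulation free of $\pdnu u_{K}$, which is precisely \eqref{Kto0:weakform} (with $\xi_{K},\xi_{\Gamma,K}$ in place of $\beta(u_{K}),\beta_{\Gamma}(\phi_{K})$). Every remaining term is linear or Lipschitz in the solution, so using weak convergence for $\pd_{t} u_{K}$, $\nabla u_{K}$, $\xi_{K}$, strong convergence (and continuity of $\pi$) for $\pi(u_{K})$, and the data convergence \eqref{Klim:ass}, I can pass to the limit term by term, first in the time-integrated identity tested against $\zeta\otimes\chi$ with $\chi\in C_{c}^{\infty}(0,T)$ and then, since each term is an integrable function of $t$, recover \eqref{Kto0:weakform} for a.e.\ $t$.

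It remains to identify $\xi=\beta(u)$ and $\xi_{\Gamma}=\beta_{\Gamma}(\phi)$. Under \eqref{Kto0:beta}, $\beta,\beta_{\Gamma}$ are continuous, so $\beta(u_{K})\to\beta(u)$ and $\beta_{\Gamma}(\phi_{K})\to\beta_{\Gamma}(\phi)$ a.e.; combined with the uniform $L^{\infty}(0,T;L^{6/p})$, resp.\ $L^{\infty}(0,T;L^{s})$, bounds and a Vitali (generalized dominated convergence) argument, the a.e.\ limits coincide with the weak-* limits, and the asserted integrability follows by weak-* lower semicontinuity. Under \eqref{Kto0:maxmono} I would instead invoke demiclosedness of the maximal monotone graphs: because $p<5$ forces the conjugate exponent $(6/p)'<6$, interpolating the strong $L^{2}(Q)$ convergence against the $L^{6}(Q)$ bound gives $u_{K}\to u$ strongly in $L^{(6/p)'}(Q)$, whence $\int_{Q}\xi_{K}u_{K}\to\int_{Q}\xi u$ and the monotonicity inequality passes to the limit to give $\xi\in\beta(u)$; the surface identification is analogous for any $q<\infty$. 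This dichotomy explains why $p\leq 5$ suffices in \eqref{Kto0:beta} while the strict bound $p<5$ is needed in \eqref{Kto0:maxmono}.
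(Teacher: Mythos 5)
Your proposal is correct and takes essentially the same route as the paper's proof: uniform-in-$K$ energy estimates made possible by the initial penalty bound $\norm{u_{0,K}-h(\phi_{0,K})}_{L^{2}(\Gamma)}^{2}\leq CK$, compactness and identification of the transmission condition, the decisive cancellation of the $\pdnu u_{K}$ terms by testing the surface equation with $\alpha^{-1}\zeta_{\Gamma}$, and identification of the nonlinear terms via a.e.\ convergence plus Egorov/Vitali in the single-valued case and via the monotonicity inequality with strong convergence of $u_{K}$ in the conjugate exponent $\frac{6}{6-p}<6$ (hence $p<5$) in the maximal monotone case. The only cosmetic differences are that you derive the energy estimate directly on the strong solutions rather than citing the Galerkin-level estimate of Remark \ref{rmk:Unif:K:est}, and you obtain the needed strong convergence by interpolating the strong $L^{2}(Q)$ convergence against the $L^{6}(Q)$ bound rather than from the $C^{0}([0,T];L^{q}(\Omega))$, $q<6$, convergence; both are equivalent.
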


\begin{remark}
Equation \eqref{Kto0:weakform} is the variational formulation of the limit problem
\begin{subequations}\label{Limit:form:h}
\begin{alignat}{3}
\pd_{t} u = \Lap u - \beta(u) - \pi(u) + f & \text{ in } Q, \\
\pd_{t}\phi = \LB \phi - \beta_{\Gamma}(\phi) - \pi_{\Gamma}(\phi) - h'(\phi) \pdnu u + f_{\Gamma} & \text{ on } \Sigma, \\
u = h(\phi) = \alpha \phi + \eta & \text{ on } \Sigma.
\end{alignat}
\end{subequations}
In the special case $\alpha = 1$ and $\eta = 0$, we have $\phi = u \vert_{\Sigma}$ and this is the setting of Calatroni and Colli \cite{CalaColli} and also Colli and Fukao \cite{ColliFukaoAC}.  While the assumption \eqref{Kto0:maxmono} for maximal monotone graphs $\beta$ and $\beta_{\Gamma}$ is not as general as assumed in \cite{CalaColli,ColliFukaoAC}, here we do not require a compatibility condition, cf. \cite[(2.22)-(2.23)]{CalaColli}, since we directly obtain estimates for the selections $\xi$ and $\xi_{\Gamma}$ with the growth conditions \eqref{Kto0:maxmono} and the estimates for $u$ and $\phi$.
\end{remark}

\begin{thm}[Continuous dependence]\label{thm:Limit:ctsdep}
Let $\{(u_{i}, \phi_{i})\}_{i=1,2}$ denote two weak solutions to \eqref{Limit:form:h} with data $\{(u_{0,i}, \phi_{0,i}, f_{i}, f_{\Gamma,i})\}_{i=1,2}$ satisfying
\begin{align*}
u_{i} \vert_{\Sigma} = h(\phi_{i}) = \alpha \phi_{i} + \eta, \quad u_{0,i} \vert_{\Gamma} = h(\phi_{0,i}).
\end{align*}
Then, there exists a positive constant $C$, depending only on $T$ and the Lipschitz constants of $\pi$ and $\pi_{\Gamma}$ such that
\begin{align*}
& \norm{u_{1} - u_{2}}_{L^{\infty}(0,T;L^{2}(\Omega)) \cap L^{2}(0,T;H^{1}(\Omega))} + \norm{\phi_{1} - \phi_{2}}_{L^{\infty}(0,T;L^{2}(\Gamma)) \cap L^{2}(0,T;H^{1}(\Gamma))} \\
& \quad \leq C \left ( \norm{u_{0,1} - u_{0,2}}_{L^{2}(\Omega)} + \norm{\phi_{0,1} - \phi_{0,2}}_{L^{2}(\Gamma)} + \norm{f_{1} - f_{2}}_{L^{2}(Q)} + \norm{f_{\Gamma,1} - f_{\Gamma,2}}_{L^{2}(\Sigma)} \right ).
\end{align*}
\end{thm}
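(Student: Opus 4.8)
The plan is to run the classical energy estimate for the difference of two solutions, the key point being that the affine linear transmission condition lets me build an admissible test function whose trace exactly absorbs the prefactor $\frac{1}{\alpha}$ in the surface part of \eqref{Kto0:weakform}. Write $u := u_{1} - u_{2}$, $\phi := \phi_{1} - \phi_{2}$, $f := f_{1} - f_{2}$, $f_{\Gamma} := f_{\Gamma,1} - f_{\Gamma,2}$, and let $\xi_{i} \in \beta(u_{i})$, $\xi_{\Gamma,i} \in \beta_{\Gamma}(\phi_{i})$ denote the relevant selections (in the single-valued case these are simply $\beta(u_{i})$, $\beta_{\Gamma}(\phi_{i})$). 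Subtracting the two weak formulations gives, for every admissible $\zeta$ and a.e.\ $t \in (0,T)$,
\begin{equation*}
\begin{aligned}
0 &= \int_{\Omega} \pd_{t} u \, \zeta + \nabla u \cdot \nabla \zeta + (\xi_{1} - \xi_{2}) \, \zeta + (\pi(u_{1}) - \pi(u_{2})) \, \zeta - f \, \zeta \dx \\
&\quad + \int_{\Gamma} \frac{1}{\alpha} \Big( \pd_{t}\phi \, \zeta_{\Gamma} + \surf \phi \cdot \surf \zeta_{\Gamma} + (\xi_{\Gamma,1} - \xi_{\Gamma,2}) \, \zeta_{\Gamma} + (\pi_{\Gamma}(\phi_{1}) - \pi_{\Gamma}(\phi_{2})) \, \zeta_{\Gamma} - f_{\Gamma} \, \zeta_{\Gamma} \Big) \dHaus.
\end{aligned}
\end{equation*}
Because both solutions obey $u_{i} \vert_{\Sigma} = \alpha \phi_{i} + \eta$, the difference satisfies $u \vert_{\Sigma} = \alpha \phi$ (the shift $\eta$ cancels), so $\zeta := u$ is admissible, as $u(t) \in H^{1}(\Omega)$ with trace $\alpha \phi(t) \in H^{1}(\Gamma)$, and its trace $\zeta_{\Gamma} = \alpha \phi$ cancels against the factor $\frac{1}{\alpha}$ in every surface integral.

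With this choice the two time-derivative terms combine into $\tfrac{1}{2}\frac{\dd}{\dt}\big( \norm{u}_{L^{2}(\Omega)}^{2} + \norm{\phi}_{L^{2}(\Gamma)}^{2} \big)$ — justified by the chain rule in Bochner spaces since $u \in H^{1}(0,T;L^{2}(\Omega))$ and $\phi \in H^{1}(0,T;L^{2}(\Gamma))$ — while the Dirichlet terms yield the nonnegative dissipation $\norm{\nabla u}_{L^{2}(\Omega)}^{2} + \norm{\surf \phi}_{L^{2}(\Gamma)}^{2}$. The monotone contributions $\int_{\Omega} (\xi_{1} - \xi_{2}) u \dx$ and $\int_{\Gamma} (\xi_{\Gamma,1} - \xi_{\Gamma,2}) \phi \dHaus$ are nonnegative by monotonicity of $\beta$ and $\beta_{\Gamma}$, hence are discarded. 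The perturbations are controlled by Lipschitz continuity, $\int_{\Omega}(\pi(u_{1})-\pi(u_{2})) u \leq L_{\pi}\norm{u}_{L^{2}(\Omega)}^{2}$ and likewise for $\pi_{\Gamma}$, and the forcing by Young's inequality. This produces the differential inequality
\begin{equation*}
\begin{aligned}
\frac{1}{2} \frac{\dd}{\dt}\Big( \norm{u}_{L^{2}(\Omega)}^{2} + \norm{\phi}_{L^{2}(\Gamma)}^{2} \Big) + \norm{\nabla u}_{L^{2}(\Omega)}^{2} + \norm{\surf \phi}_{L^{2}(\Gamma)}^{2} \\
\leq C \Big( \norm{u}_{L^{2}(\Omega)}^{2} + \norm{\phi}_{L^{2}(\Gamma)}^{2} + \norm{f}_{L^{2}(\Omega)}^{2} + \norm{f_{\Gamma}}_{L^{2}(\Gamma)}^{2} \Big),
\end{aligned}
\end{equation*}
where $C$ depends only on $L_{\pi}$ and $L_{\pi_{\Gamma}}$; crucially, the cancellation removes any dependence on $\alpha$, matching the claim.

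Integrating in time and invoking Gronwall's lemma then bounds $\norm{u(t)}_{L^{2}(\Omega)}^{2} + \norm{\phi(t)}_{L^{2}(\Gamma)}^{2}$ together with the time-integrated dissipation by $C(T, L_{\pi}, L_{\pi_{\Gamma}})$ times the sum of data differences, with initial contribution $\norm{u_{0,1}-u_{0,2}}_{L^{2}(\Omega)}^{2} + \norm{\phi_{0,1}-\phi_{0,2}}_{L^{2}(\Gamma)}^{2}$. Taking the supremum over $t \in [0,T]$ gives the $L^{\infty}(0,T;L^{2})$ norms; the $L^{2}(0,T;H^{1})$ norms follow by adding the controlled dissipation $\int_{0}^{T} \norm{\nabla u}^{2} + \norm{\surf \phi}^{2} \dt$ to $\int_{0}^{T} \norm{u}^{2} + \norm{\phi}^{2} \dt \leq T \sup_{t}(\cdots)$, so no Poincaré inequality is needed. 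The only delicate point — and where affine linearity is essential — is the admissibility of $\zeta = u$ and the exact $\frac{1}{\alpha}$ cancellation on $\Gamma$: since the limit problem has no boundary penalty, the trace of $u$ is rigidly slaved to $\phi$ through $u \vert_{\Sigma} = \alpha \phi$, which is precisely what makes the surface dissipation sign-definite. A genuinely nonlinear $h$ would destroy this cancellation and the argument would not close.
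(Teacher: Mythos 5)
Your proposal is correct and follows essentially the same route as the paper: the paper also subtracts the two weak formulations \eqref{Kto0:weakform}, tests with $\zeta = \hat{u} = u_1 - u_2$ whose trace $\hat{u}\vert_{\Sigma} = h(\phi_1) - h(\phi_2) = \alpha\hat{\phi}$ cancels the $\frac{1}{\alpha}$ prefactor in the surface integrals, drops the monotone terms, bounds the perturbations via Lipschitz continuity, and concludes with Gronwall's inequality. Your added remarks (admissibility of the test function, the chain rule in Bochner spaces, and the independence of the constant from $\alpha$) are accurate elaborations of steps the paper leaves implicit.
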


As a consequence, any weak solution to \eqref{Limit:form:h} is unique.

\subsection{Existence of strong solutions to the limit problem}
For a convex, proper, lower semicontinuous function $f: \R \to [0,\infty]$ and an affine linear function $g: \R \to \R$, $g(s) = \alpha^{-1}(s-\eta)$ where $\alpha \neq 0$ and $\eta \in \R$, it is clear from the definition of the subdifferential that
\begin{align*}
\pd (f \circ g)(z) = \frac{1}{\alpha} \pd f(g(z)) 
\end{align*}
for all $z \in D(f \circ g) := \{ y \in \R : f(g(y)) < \infty \}$.  Our aim is to express \eqref{Intro:ACAC:lim:g} (for affine linear $g$) as an abstract equation, and appeal to the framework of Colli and Fukao \cite{ColliFukaoAC} to prove strong existence, while continuous dependence on data and uniqueness follow readily from Theorem \ref{thm:Limit:ctsdep}.  To achieve this let us introduce the following Hilbert spaces
\begin{align}\label{Prod:Space}
\bm{H} = L^{2}(\Omega) \times L^{2}(\Gamma), \quad \bm{V} = \{(a,b) \in H^{1}(\Omega) \times H^{1}(\Gamma) : a \vert_{\Gamma} = b \},
\end{align}
with the inner product
\begin{align}\label{Prod:Space:innerProd}
(\bm{p}, \bm{q})_{\bm{X}} = (p,q)_{X} + (p_{\Gamma}, q_{\Gamma})_{X_{\Gamma}}
\end{align}
for $\bm{p} = (p,p_{\Gamma})$, $\bm{q} = (q, q_{\Gamma})$, $\bm{X} = X \times X_{\Gamma}$ for $X = L^{2}(\Omega)$ or $H^{1}(\Omega)$, and for $X_{\Gamma} = L^{2}(\Gamma)$ or $H^{1}(\Gamma)$.  We also introduce the proper, lower semicontinuous and convex functional $\varphi: \bm{H} \to [0,\infty]$ by
\begin{align}
\label{CF:varphi}
\varphi(\bm{z}) = \begin{cases}
\displaystyle \int_{\Omega} \frac{1}{2} \abs{\nabla z}^{2} + \hat{\beta}(z) \dx + \int_{\Gamma} \frac{1}{2 \alpha^{2}} \abs{\surf z_{\Gamma}}^{2} + \hat{\beta}_{\Gamma}(g(z_{\Gamma})) \dHaus \\[2ex]
\quad \quad  \text{ if } \bm{z} = (z, z_{\Gamma}) \in \bm{V}, \quad \hat{\beta}(z) \in L^{1}(\Omega), \; \hat{\beta}_{\Gamma}(g(z_{\Gamma})) \in L^{1}(\Gamma), \\
+ \infty \quad \text{ otherwise},
\end{cases}
\end{align}
whose subdifferential $\pd \varphi$ can be characterized formally as
\begin{align*}
& \bm{y} = (y, y_{\Gamma}) \in \pd \varphi(\bm{z}) \text{ is an element of } \bm{H} \text{ if and only if } \\
& \quad (y, y_{\Gamma}) = (-\Lap z + \beta(z), -\alpha^{-2} \LB z_{\Gamma} + \alpha^{-1} \beta_{\Gamma}(g(z_{\Gamma})) + \pdnu z).
\end{align*} 
Then, we can write \eqref{Intro:ACAC:lim:g} into a single abstract equation for $\bm{u} := (u, u_{\Gamma})$:
\begin{equation}\label{Abs:Equ}
\begin{aligned}
\AA^{2} \bm{u}'(t) + \pd \varphi(\bm{u}(t)) + \AA \left ( \bm{\pi}(\bm{u}(t)) - \bm{f}(t) \right ) \ni \bm{0} & \text{ in } \bm{H} \text{ for a.e. } t \in (0,T), \\
\bm{u}(0) = \bm{u}_{0} & \text{ in } \bm{H},
\end{aligned}
\end{equation}
where $\bm{u}_{0} = (u_{0}, u_{0} \vert_{\Gamma})$, $\bm{\pi}(\bm{u}) := (\pi(u), \pi_{\Gamma}(g(u_{\Gamma})))$, $\bm{f} := (f, f_{\Gamma})$, and $\AA$ is the constant matrix
\begin{align}\label{mat}
\AA = \begin{pmatrix}
1 & 0 \\ 0 & \alpha^{-1} \end{pmatrix}.
\end{align}

Our fifth result concerning the strong existence to the limit problem is formulated as follows.

\begin{thm}[Strong existence of the limit problem]\label{thm:Limit:Exist}
For any $T > 0$, $\eta \in \R$ and $\alpha \neq 0$, under assumptions \eqref{ass:domain}-\eqref{ass:f} and
\begin{enumerate}[resume*]
\item \label{Limit:strong:beta} For any $q,r < \infty$, there exists a positive constant $C$ such that
\begin{align*}
\abs{\xi} \leq C \left ( 1 + \abs{u}^{q} \right ), \quad \abs{\xi_{\Gamma}} \leq C \left ( 1 + \abs{u_{\Gamma}}^{r} \right )
\end{align*}
for all $\xi \in \beta(u)$ and $\xi_{\Gamma} \in \beta_{\Gamma}(u_{\Gamma})$. 
\item \label{ass:ini:strong} The initial data satisfy $u_{0} \in H^{2}(\Omega)$ with $\beta^{\circ}(u_{0}) \in L^{2}(\Omega)$ and trace $u_{0} \vert_{\Gamma} \in H^{2}(\Gamma)$ with $\beta_{\Gamma}^{\circ}(\alpha^{-1}(u_{0} \vert_{\Gamma} - \eta)) \in L^{2}(\Gamma)$.
\end{enumerate}
Then, there exists a unique strong solution $(u, u_{\Gamma}, \xi, \xi_{\Gamma})$ satisfying
\begin{align*}
u & \in L^{\infty}(0,T;H^{2}(\Omega)) \cap H^{1}(0,T;H^{1}(\Omega)) \cap W^{1,\infty}(0,T;L^{2}(\Omega)), \\
u_{\Gamma} & \in L^{\infty}(0,T;H^{2}(\Gamma)) \cap H^{1}(0,T;H^{1}(\Gamma)) \cap W^{1,\infty}(0,T;L^{2}(\Gamma)), \\
\xi & \in L^{\infty}(0,T;L^{2}(\Omega)) \text{ with } \xi \in \beta(u) \text{ a.e. in } Q, \\
\xi_{\Gamma} & \in L^{\infty}(0,T;L^{2}(\Gamma)) \text{ with } \xi_{\Gamma} \in \beta_{\Gamma}(\alpha^{-1}(u_{\Gamma} - \eta)) \text{ a.e. on } \Sigma, 
\end{align*}
and
\begin{subequations}\label{ACAC:limit}
\begin{alignat}{3}
\pd_{t} u = \Lap u - \xi - \pi(u) + f & \text{ in } Q, \\
\pd_{t} u_{\Gamma} = \LB u_{\Gamma} - \alpha \left ( \xi_{\Gamma} + \pi_{\Gamma}(\alpha^{-1}(u_{\Gamma} - \eta)) - f_{\Gamma} \right ) - \alpha^{2} \pdnu u & \text{ on } \Sigma, \\
u_{\Gamma} = u \vert_{\Sigma} & \text{ on } \Sigma, \\
u(0) = u_{0} \text{ in } \Omega, \quad u_{\Gamma}(0) = u_{0} \vert_{\Gamma} & \text{ on } \Gamma.
\end{alignat}
\end{subequations}
\end{thm}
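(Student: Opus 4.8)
The plan is to realize the system \eqref{ACAC:limit} as the abstract Cauchy problem \eqref{Abs:Equ} in $\bm{H}$ and then invoke the generation theory of Colli and Fukao \cite{ColliFukaoAC}. First I would set up the functional-analytic framework by verifying the abstract hypotheses. The functional $\varphi$ in \eqref{CF:varphi} is proper (thanks to \eqref{ass:ini:strong}, which places $\bm{u}_{0}$ in its domain), convex and lower semicontinuous on $\bm{H}$, the latter two properties inherited from $\hat{\beta},\hat{\beta}_{\Gamma}$ and the weak lower semicontinuity of the Dirichlet terms; the perturbation $\bm{\pi}(\bm{u}) = (\pi(u),\pi_{\Gamma}(g(u_{\Gamma})))$ is Lipschitz from $\bm{H}$ to $\bm{H}$ since $\pi,\pi_{\Gamma}$ are Lipschitz by \eqref{ass:pi} and $g$ is affine linear; and $\bm{f} = (f,f_{\Gamma}) \in H^{1}(0,T;\bm{H})$ by \eqref{ass:f}. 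Because $\alpha \neq 0$, the matrix $\AA^{2} = \mathrm{diag}(1,\alpha^{-2})$ from \eqref{mat} is symmetric and positive definite, so $(\bm{p},\bm{q}) \mapsto (\AA^{2}\bm{p},\bm{q})_{\bm{H}}$ is an inner product on $\bm{H}$ equivalent to the standard one. Multiplying \eqref{Abs:Equ} by $\AA^{-2}$ recasts it, in this weighted space, as the genuine gradient flow $\bm{u}'(t) + \pd_{*}\varphi(\bm{u}(t)) \ni \AA^{-1}(\bm{f}(t) - \bm{\pi}(\bm{u}(t)))$, where $\pd_{*}\varphi = \AA^{-2}\pd\varphi$ is the subdifferential of $\varphi$ relative to the weighted inner product and the right-hand side is Lipschitz in $\bm{u}$. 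This is precisely the structure to which the Colli--Fukao machinery applies.

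The central technical step is the rigorous characterization of $\pd\varphi$ announced after \eqref{CF:varphi}, namely that $\bm{y} = (y,y_{\Gamma}) \in \pd\varphi(\bm{z}) \cap \bm{H}$ forces $z \in H^{2}(\Omega)$, $z_{\Gamma} \in H^{2}(\Gamma)$ with
\begin{align*}
y = -\Lap z + \beta(z), \qquad y_{\Gamma} = -\alpha^{-2}\LB z_{\Gamma} + \alpha^{-1}\beta_{\Gamma}(g(z_{\Gamma})) + \pdnu z .
\end{align*}
The inclusion $\supseteq$ follows directly from the definition of the subdifferential by integration by parts, using the trace constraint $z\vert_{\Gamma} = z_{\Gamma}$ built into $\bm{V}$ in \eqref{Prod:Space}. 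For the reverse inclusion I would, given $\bm{y} \in \bm{H}$, solve the resolvent problem $\bm{z} + \pd\varphi(\bm{z}) \ni \bm{y}$ via the Yosida approximations $\beta_{\lambda},\beta_{\Gamma,\lambda}$, derive $H^{2}(\Omega)\times H^{2}(\Gamma)$ bounds on the regularized solutions together with $L^{2}$ bounds on the selections, uniform in $\lambda$, and pass to the limit. Here elliptic regularity for the bulk problem and for the Laplace--Beltrami equation on $\Gamma$, coupled through the normal-derivative term, yields the claimed $H^{2}$ regularity, and the growth assumption \eqref{Limit:strong:beta} is what allows the two selections to be controlled and the estimates to close.

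Granting this characterization, the abstract existence theorem of \cite{ColliFukaoAC} produces a unique $\bm{u} = (u,u_{\Gamma})$ with $\bm{u} \in W^{1,\infty}(0,T;\bm{H})$, $\bm{u}(t) \in D(\pd\varphi)$ for a.e.\ $t$, and $\pd\varphi(\bm{u}) \in L^{\infty}(0,T;\bm{H})$. Unravelling the components via the characterization recovers \eqref{ACAC:limit} together with $u \in L^{\infty}(0,T;H^{2}(\Omega))$ and $u_{\Gamma} \in L^{\infty}(0,T;H^{2}(\Gamma))$, while the $H^{1}(0,T;H^{1})$ regularity is obtained by a further energy estimate exploiting $\bm{f} \in H^{1}(0,T;\bm{H})$ (testing the time-differenced equation with $\bm{u}'$). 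The selection bounds then follow from \eqref{Limit:strong:beta}: in the present dimensions the embeddings $H^{2}(\Omega) \hookrightarrow L^{\infty}(\Omega)$ and $H^{2}(\Gamma) \hookrightarrow L^{\infty}(\Gamma)$ give $u \in L^{\infty}(Q)$ and $u_{\Gamma} \in L^{\infty}(\Sigma)$, whence the at-most-polynomial growth of $\beta$ and $\beta_{\Gamma}$ forces $\xi,\xi_{\Gamma} \in L^{\infty}(0,T;L^{2})$ with $\xi \in \beta(u)$ and $\xi_{\Gamma} \in \beta_{\Gamma}(\alpha^{-1}(u_{\Gamma}-\eta))$. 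Uniqueness and continuous dependence are immediate from Theorem \ref{thm:Limit:ctsdep}.

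I expect the main obstacle to be the characterization of $\pd\varphi$ and the attendant elliptic regularity for the coupled bulk--surface system. Unlike in \cite{CalaColli}, no domination (compatibility) condition between $\beta$ and $\beta_{\Gamma}$ is assumed here, so the boundary selection $\xi_{\Gamma}$ cannot be controlled by the bulk data through the usual monotonicity pairing on $\Gamma$; it is precisely to circumvent this that \eqref{Limit:strong:beta} is imposed, letting both selections be estimated a posteriori from the $L^{\infty}$ bounds on $u$ and $u_{\Gamma}$ delivered by the $H^{2}$ regularity rather than from an absorption argument at the boundary.
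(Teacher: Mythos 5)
Your strategy is sound and shares the paper's skeleton -- the abstract formulation \eqref{Abs:Equ} in the product space with the matrix $\AA$, Yosida approximation of $\beta, \beta_{\Gamma}$, the bulk--surface elliptic bootstrap ($\Lap u \in L^{2}$ plus trace data $\Rightarrow H^{\frac{3}{2}}(\Omega) \Rightarrow \pdnu u \in L^{2}(\Gamma) \Rightarrow H^{2}(\Gamma)$ for the surface variable $\Rightarrow H^{2}(\Omega)$), the use of \eqref{Limit:strong:beta} in place of a Calatroni--Colli compatibility condition, and uniqueness via Theorem \ref{thm:Limit:ctsdep} -- but the organization is genuinely different. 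The paper never proves the characterization of $\pd\varphi$: it is stated only formally after \eqref{CF:varphi}, and all the analysis is done instead on a regularized \emph{evolution} problem \eqref{Abst:Approx}, in which $\varphi$ is replaced by $\varphi_{\eps}$ (Yosida nonlinearities plus $\eps$-coercive quadratic terms), solved via the Colli--Visintin theorem \cite{ColliVisintin} and a contraction argument (Lemma \ref{lem:eps:wellpose}), followed by elliptic regularity of the approximate solutions (Lemma \ref{lem:eps:reg}), $\eps$-uniform estimates obtained by testing with $\pd_{t}u_{\eps}$, $\beta_{\eps}(u_{\eps})$ and $\alpha^{-1}\beta_{\Gamma,\eps}(g(u_{\Gamma,\eps}))$, and a passage to the limit (Section \ref{sec:ptlim}); the top regularity $W^{1,\infty}(L^{2}) \cap H^{1}(H^{1}) \cap L^{\infty}(H^{2})$ is then obtained by a time differentiation \eqref{reg:1} that the paper can only justify through an auxiliary Faedo--Galerkin scheme plus uniqueness. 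Your route -- absorb $\AA^{2}$ into a weighted inner product, prove the resolvent characterization of $\pd\varphi$ rigorously, then invoke the classical generation theory (this is Brezis \cite{Brezis}, not literally a theorem of \cite{ColliFukaoAC}, which itself proceeds by approximation) with $\bm{u}_{0} \in D(\pd\varphi)$ guaranteed by \eqref{ass:ini:strong} and a bootstrap through the Lipschitz perturbation -- shifts all the hard elliptic work to the stationary resolvent problem, where you must reproduce essentially the paper's second and third estimates. What it buys is a cleaner endgame: once $\bm{u} \in W^{1,\infty}(0,T;\bm{H})$ and the characterization are available, the $H^{1}(0,T;H^{1})$ estimate can be run rigorously by difference quotients on the limit equation (monotonicity of $\beta$ and of $\beta_{\Gamma} \circ g$, cancellation of the $\pdnu$ terms across the bulk and surface equations), avoiding the paper's Galerkin detour -- but you should say explicitly that ``time-differenced'' means difference quotients, since neither approach permits a literal second time derivative.

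One caution, because it reflects a misreading of how the estimates close. Your final paragraph claims that both selections are ``estimated a posteriori from the $L^{\infty}$ bounds on $u$ and $u_{\Gamma}$ delivered by the $H^{2}$ regularity rather than from an absorption argument at the boundary.'' For the bulk selection this is circular: only the sum $-\Lap u + \xi$ is known to lie in $L^{2}(\Omega)$, so you cannot obtain $u(t) \in H^{2}(\Omega)$ before controlling $\xi(t)$ in $L^{2}(\Omega)$; the two bounds must be extracted \emph{simultaneously} by the monotonicity trick (test with $\beta_{\lambda}(u)$ and absorb $\norm{\beta_{\lambda}(u)}_{L^{2}(\Omega)}^{2}$ into the left-hand side), exactly as in \eqref{CF:eps:3:est1}. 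The hypothesis \eqref{Limit:strong:beta} cannot help in the bulk at all: with arbitrary exponent $q$ and only $H^{1}(\Omega) \hookrightarrow L^{6}(\Omega)$ at the energy level, $\abs{u}^{q}$ is not square-integrable. Its real role is confined to the boundary terms generated by that test: the cross term $\beta_{\Gamma,\lambda}(g(u_{\Gamma}))\,\beta_{\lambda}(u_{\Gamma})$ has no sign precisely because $g(s)$ and $s$ need not have the same sign (Remark \ref{rem:CalaColli}), and it is bounded by $C(1 + \abs{u_{\Gamma}}^{q+r})$, which is integrable on the \emph{two-dimensional} surface thanks to $H^{1}(\Gamma) \hookrightarrow L^{s}(\Gamma)$ for every $s < \infty$ -- an energy-level bound, not an $L^{\infty}$ one. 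Provided your resolvent analysis in the second paragraph is carried out with this mechanism (absorption in the bulk, growth plus the surface Sobolev embedding on $\Gamma$), the proposal goes through; if it were carried out as described in your last paragraph, it would not close.
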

Let us mention that in \eqref{Limit:strong:beta} we allow arbitrary polynomial growth for the maximal monotone graph $\beta$, which is in contrast to the exponent as assumed in \eqref{Kto0:maxmono}.

\subsection{Error estimate}

Due to the existence of strong solutions for the limit problem \eqref{Limit:form:h}, we can also derive a convergence rate.  This is detailed in our sixth result below.  

\begin{thm}[Convergence rate]\label{thm:rates}
Let $\mathbb{X}_{\Omega} := L^{\infty}(0,T;L^{2}(\Omega)) \cap L^{2}(0,T;H^{1}(\Omega))$ and $\mathbb{X}_{\Gamma} := L^{\infty}(0,T;L^{2}(\Gamma)) \cap L^{2}(0,T;H^{1}(\Gamma))$.  Under assumption \eqref{Kto0:h}, \eqref{Kto0:beta} $($or \eqref{Kto0:maxmono}$)$, for $K > 0$, let $(u_{K}, \phi_{K})$ denote the unique strong solution to \eqref{ACAC:gen} with data $(u_{0,K}, \phi_{0,K}, f_{K}, f_{\Gamma,K})$ obtained from Theorems \ref{thm:Exist} and \ref{thm:Ctsdep}, and let $(u,\phi)$ denote the unique strong solution to \eqref{Limit:form:h} with data $(u_{0}, \phi_{0}, f, f_{\Gamma})$ obtained from Theorem \ref{thm:Limit:Exist}, where we set $\phi := \alpha^{-1}(u_{\Gamma} - \eta)$ and $\phi_{0} := \alpha^{-1} (u_{0} \vert_{\Gamma} - \eta)$.  Then, there exists a positive constant $C$ depending only on the Lipschitz constants of $\pi$, $\pi_{\Gamma}$ and on $T$, such that
\begin{equation}\label{Rate:Est:1}
\begin{aligned}
& \norm{u_{K} - u}_{\mathbb{X}_{\Omega}}^{2} + \norm{\phi_{K} - \phi}_{\mathbb{X}_{\Gamma}}^{2} + K^{-1} \norm{\alpha \phi_{K} + \eta - u_{K}}_{L^{2}(\Sigma)}^{2} \\
& \quad \leq C \left ( \norm{f_{K} - f}_{L^{2}(Q)}^{2} + \norm{f_{\Gamma,K} - f_{\Gamma}}_{L^{2}(\Sigma)}^{2} + \norm{u_{0,K} - u_{0}}_{L^{2}(\Omega)}^{2} + \norm{\phi_{0,K} - \phi_{0}}_{L^{2}(\Gamma)}^{2} \right ) \\
& \qquad + C K \norm{\pdnu u}_{L^{2}(0,T;L^{2}(\Gamma))}^{2}.
\end{aligned}
\end{equation}
Furthermore, in addition to the assumption \eqref{Klim:ass} for the data $(u_{0,K}, \phi_{0,K}, f_{K}, f_{\Gamma,K})$, suppose that there exist positive constants $C$, $\theta_{1}$, $\theta_{2}$, $\theta_{3}$ and $\theta_{4}$, not depending on $K$, such that
\begin{align*}
\norm{f_{K} - f}_{L^{2}(Q)} & \leq C K^{\theta_{1}}, \quad \norm{f_{\Gamma,K} - f_{\Gamma}}_{L^{2}(\Sigma)} \leq C K^{\theta_{2}}, \\
\norm{u_{0,K} - u_{0}}_{L^{2}(\Omega)} & \leq C K^{\theta_{3}}, \quad \norm{\phi_{0,K} - \phi_{0}}_{L^{2}(\Gamma)} \leq C K^{\theta_{4}}.
\end{align*}
Then, it holds that
\begin{equation}\label{Rate:Est:2}
\begin{aligned}
& \norm{u_{K} - u}_{\mathbb{X}_{\Omega}} + \norm{\phi_{K} - \phi}_{\mathbb{X}_{\Gamma}} + K^{-\frac{1}{2}} \norm{\alpha \phi_{K} + \eta - u_{K}}_{L^{2}(\Sigma)} \leq C K^{\theta}
\end{aligned}
\end{equation}
for $\theta := \min( \frac{1}{2}, \theta_{1}, \theta_{2}, \theta_{3}, \theta_{4})$.
\end{thm}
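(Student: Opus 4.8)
The plan is to derive a differential inequality for the difference of the two solutions and close it via Gronwall's lemma, with the penalty term $K^{-1}\norm{\alpha\phi_K + \eta - u_K}_{L^2(\Sigma)}^2$ emerging naturally from the interplay between the Robin condition \eqref{ACAC:Robin} and the exact transmission condition of \eqref{Limit:form:h}. Write $\bar u := u_K - u$ and $\bar\phi := \phi_K - \phi$, where by Theorem \ref{thm:Limit:Exist} we set $\phi = \alpha^{-1}(u_\Gamma - \eta)$, so that $u\vert_\Gamma = \alpha\phi + \eta$ holds exactly. Since both $(u_K,\phi_K)$ and $(u,\phi)$ are strong solutions (so that $\pdnu u_K, \pdnu u \in L^2(\Sigma)$ and all integrations by parts are justified), I would subtract \eqref{ACAC:bulk} from the bulk equation of \eqref{Limit:form:h}, and likewise subtract the surface equations, recalling that $h' \equiv \alpha$ in the affine case. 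Then I would test the bulk difference with $\bar u$ and the surface difference with $\bar\phi$, integrating by parts in the bulk.

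The crucial computation is the boundary contribution. Testing the bulk difference with $\bar u$ produces $-\int_\Gamma \pdnu\bar u\,\bar u\vert_\Gamma\dHaus$, while the coupling term $h'(\phi)\pdnu u = \alpha\pdnu u$ in the surface equation, tested with $\bar\phi$, produces $+\alpha\int_\Gamma\pdnu\bar u\,\bar\phi\dHaus$. Their sum equals $\int_\Gamma\pdnu\bar u\,(\alpha\bar\phi - \bar u\vert_\Gamma)\dHaus$, which is exactly why one must \emph{not} rescale the surface test function. I would then invoke the residual identity $\alpha\bar\phi - \bar u\vert_\Gamma = \alpha\phi_K + \eta - u_K\vert_\Gamma$, which by the Robin condition \eqref{ACAC:Robin} equals precisely $K\pdnu u_K$. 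Writing $w := \alpha\phi_K + \eta - u_K = K\pdnu u_K$ and $\pdnu\bar u = K^{-1}w - \pdnu u$, the boundary term becomes $K^{-1}\norm{w}_{L^2(\Gamma)}^2 - \int_\Gamma\pdnu u\,w\dHaus$. A Young inequality absorbs half of the first term into the second, leaving on the left the good term $\tfrac{1}{2}K^{-1}\norm{w}_{L^2(\Gamma)}^2$ and on the right the contribution $\tfrac{K}{2}\norm{\pdnu u}_{L^2(\Gamma)}^2$, which is precisely the source of the $CK\norm{\pdnu u}_{L^2(\Sigma)}^2$ term in \eqref{Rate:Est:1}.

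The remaining terms are standard. The monotone contributions are non-negative (single-valued under \eqref{Kto0:beta}, or via monotonicity of the graphs, using selections $\xi\in\beta(u)$, $\xi_K\in\beta(u_K)$ and analogously for $\beta_\Gamma$, under \eqref{Kto0:maxmono}) and are discarded; the Lipschitz perturbations yield terms bounded by $L_\pi\norm{\bar u}_{L^2(\Omega)}^2 + L_{\pi_\Gamma}\norm{\bar\phi}_{L^2(\Gamma)}^2$; and the forcing terms are controlled by Young's inequality. Collecting everything yields
\begin{align*}
&\frac{\dd}{\dt}\Bigl(\tfrac12\norm{\bar u}_{L^2(\Omega)}^2 + \tfrac12\norm{\bar\phi}_{L^2(\Gamma)}^2\Bigr) + \norm{\nabla\bar u}_{L^2(\Omega)}^2 + \norm{\surf\bar\phi}_{L^2(\Gamma)}^2 + \tfrac{1}{2K}\norm{w}_{L^2(\Gamma)}^2 \\
&\quad \leq C\bigl(\norm{\bar u}_{L^2(\Omega)}^2 + \norm{\bar\phi}_{L^2(\Gamma)}^2\bigr) + \tfrac{K}{2}\norm{\pdnu u}_{L^2(\Gamma)}^2 + \tfrac12\norm{f_K - f}_{L^2(\Omega)}^2 + \tfrac12\norm{f_{\Gamma,K} - f_\Gamma}_{L^2(\Gamma)}^2 ,
\end{align*}
and integrating in time and applying Gronwall's lemma delivers \eqref{Rate:Est:1}, the initial data entering through $\bar u(0) = u_{0,K} - u_0$ and $\bar\phi(0) = \phi_{0,K} - \phi_0$; note $\pdnu u \in L^2(\Sigma)$ since $u \in L^\infty(0,T;H^2(\Omega))$ by Theorem \ref{thm:Limit:Exist}.

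Finally, \eqref{Rate:Est:2} follows by taking square roots in \eqref{Rate:Est:1} and inserting the assumed rates $\norm{f_K - f}_{L^2(Q)} \leq CK^{\theta_1}$, $\norm{f_{\Gamma,K} - f_\Gamma}_{L^2(\Sigma)} \leq CK^{\theta_2}$, and the two initial-data rates, together with $K^{1/2}\norm{\pdnu u}_{L^2(\Sigma)} \leq CK^{1/2}$ (valid since $u$ is the fixed limit solution, independent of $K$), so that the slowest exponent is $\theta = \min(\tfrac12, \theta_1, \theta_2, \theta_3, \theta_4)$. The main obstacle is the boundary-term manipulation in the second step; once the residual identity $\alpha\phi_K + \eta - u_K = K\pdnu u_K$ is exploited to convert the coupled boundary integrals into the penalty quantity $\norm{w}_{L^2(\Gamma)}^2$, the rest is a routine energy-plus-Gronwall argument.
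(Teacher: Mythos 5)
Your proposal is correct and follows essentially the same argument as the paper: subtract the two systems, test the bulk difference with $u_K - u$ and the surface difference with $\phi_K - \phi$ (the paper writes this as testing its $\alpha^{-1}$-rescaled surface identity with $\alpha\hat\phi$, which is the identical computation), combine the boundary terms into $\int_\Gamma \pdnu\hat u\,(\alpha\hat\phi - \hat u)\dHaus$, use $u\vert_\Sigma = \alpha\phi + \eta$ and the Robin condition to rewrite this as $K^{-1}\norm{\alpha\phi_K+\eta-u_K}_{L^2(\Gamma)}^2 - \int_\Gamma \pdnu u\,(\alpha\phi_K+\eta-u_K)\dHaus$, absorb via Young's inequality, and conclude with Gronwall. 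The derivation of \eqref{Rate:Est:2} from \eqref{Rate:Est:1} by inserting the assumed rates also matches the paper.
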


\section{Continuous dependence}\label{sec:ctsdep}
Let $\{(u_{0,i}, \phi_{0,i}, f_{i}, f_{\Gamma,i})\}_{i=1,2}$ denote two sets of data and $\{(u_{i}, \phi_{i})\}_{i=1,2}$ the corresponding solutions to \eqref{ACAC:gen} with differences denoted by $\hat{u}$, $\hat{\phi}$, $\hat{u}_{0}$, $\hat{\phi}_{0}$, $\hat{f}$ and $\hat{f}_{\Gamma}$, respectively.  Then, testing the difference of \eqref{ACAC:bulk} with $\hat{u}$, the difference of \eqref{ACAC:surf} with $\hat{\phi}$ and adding, using the monotonicity of $\beta$ and $\beta_{\Gamma}$, we arrive at
\begin{align*}
& \frac{1}{2} \frac{\dd}{\dt} \left ( \norm{\hat{u}}_{L^{2}(\Omega)}^{2} + \norm{\hat{\phi}}_{L^{2}(\Gamma)}^{2} \right ) + \norm{\nabla \hat{u}}_{L^{2}(\Omega)}^{2} + \norm{\surf \hat{\phi}}_{L^{2}(\Gamma)}^{2} \\
& \quad \leq \norm{\hat{f}}_{L^{2}(\Omega)} \norm{\hat{u}}_{L^{2}(\Omega)} + L_{\pi} \norm{\hat{u}}_{L^{2}(\Omega)}^{2} + \norm{\hat{f}_{\Gamma}}_{L^{2}(\Gamma)} \norm{\hat{\phi}}_{L^{2}(\Gamma)} + L_{\pi_{\Gamma}} \norm{\hat{\phi}}_{L^{2}(\Gamma)}^{2} \\
& \qquad + \int_{\Gamma} \pdnu \hat{u} \, \hat{u} - h'(\phi_{1}) \pdnu \hat{u} \, \hat{\phi} - \pdnu u_{2} (h'(\phi_{1}) - h'(\phi_{2})) \hat{\phi} \dHaus,
\end{align*}
where $L_{\pi}$ and $L_{\pi_{\Gamma}}$ are the Lipschitz constants of $\pi$ and $\pi_{\Gamma}$.  A close inspection of the last term on the right-hand side yields that
\begin{align*}
& \int_{\Gamma} \pdnu \hat{u} \, \hat{u} - h'(\phi_{1}) \pdnu \hat{u} \, \hat{\phi} - \pdnu u_{2} (h'(\phi_{1}) - h'(\phi_{2})) \hat{\phi} \dHaus \\
& \quad = \int_{\Gamma} K^{-1} (h(\phi_{1})-h(\phi_{2}) - \hat{u}) (\hat{u} - h'(\phi_{1})\hat{\phi}) - \pdnu u_{2}(h'(\phi_{1})-h'(\phi_{2})) \hat{\phi} \dHaus \\
 & \quad \leq - K^{-1} \norm{\hat{u}}_{L^{2}(\Gamma)}^{2} + K^{-1} L_{h} \norm{\hat{\phi}}_{L^{2}(\Gamma)} \norm{\hat{u}}_{L^{2}(\Gamma)} + C K^{-1} \left ( L_{h} \norm{\hat{\phi}}_{L^{2}(\Gamma)} + \norm{\hat{u}}_{L^{2}(\Gamma)} \right ) \norm{\hat{\phi}}_{L^{2}(\Gamma)} \\
 & \qquad + \norm{\pdnu u_{2}}_{L^{2}(\Gamma)} L_{h'} \norm{\hat{\phi}}_{L^{4}(\Gamma)}^{2},
\end{align*}
where $L_{h'}$ is the Lipschitz constant of $h'$, and $C$ is a positive constant depending on $\norm{h'}_{L^{\infty}(\R)}$.  Since $\pdnu u_{2} \in H^{1}(0,T;L^{2}(\Gamma)) \subset L^{\infty}(0,T;L^{2}(\Gamma))$, by the interpolation estimate
\begin{align*}
\norm{\hat{\phi}}_{L^{4}(\Gamma)}^{2} \leq \eps \norm{\surf \hat{\phi}}_{L^{2}(\Gamma)}^{2} + C_{\eps} \norm{\hat{\phi}}_{L^{2}(\Gamma)}^{2},
\end{align*}
and Gronwall's inequality we obtain
\begin{align*}
& \norm{\hat{u}}_{L^{\infty}(0,T;L^{2}(\Omega)) \cap L^{2}(0,T;H^{1}(\Omega))} + \norm{\hat{\phi}}_{L^{\infty}(0,T;L^{2}(\Gamma)) \cap L^{2}(0,T;H^{1}(\Gamma))} \\
& \quad \leq C \left ( \norm{\hat{u}_{0}}_{L^{2}(\Omega)} + \norm{\hat{\phi}_{0}}_{L^{2}(\Gamma)} + \norm{\hat{f}}_{L^{2}(0,T;L^{2}(\Omega))} + \norm{\hat{f}_{\Gamma}}_{L^{2}(0,T;L^{2}(\Gamma))} \right ).
\end{align*}

\section{Existence}\label{sec:exist}
In this section, we introduce a two-level approximation and provide a number of a priori estimates.

\subsection{Approximation scheme}
For $\eps \in (0,1)$, we recall that the Yosida approximation $\hat{\beta}_{\eps}$ of the proper, lower semicontinuous function $\hat{\beta} : \R \to [0,\infty]$ is defined as
\begin{align*}
\hat{\beta}_{\eps}(s) := \inf_{y \in \R} \left ( \hat{\beta}(y) + \frac{1}{2 \eps} \abs{s - y}^{2} \right ),
\end{align*}
which satisfies
\begin{align}\label{Yosida}
0 \leq \hat{\beta}_{\eps}(s) \leq \hat{\beta}(s) \quad \forall s \in \R \quad \text{ and } \quad \hat{\beta}_{\eps}(s) \nearrow \hat{\beta}(s) \quad \text{ as } \eps \searrow 0.
\end{align}
For a short introduction to maximal monotone operators and the Yosida approximation we refer the reader to \cite{Barbu,Brezis,GL} and the references cited therein.  We replace $\beta(u)$ and $\beta_{\Gamma}(\phi)$ in \eqref{ACAC:gen} with $\beta_{\eps}(u)$ and $\beta_{\Gamma,\eps}(\phi)$, respectively, and use a Galerkin procedure to establish the existence of solutions.  Let us point out that by \eqref{ass:ini}, \eqref{Yosida} and Remark \ref{rem:initialdata}, there exists a positive constant $C$, independent of $\eps$ such that
\begin{align}\label{Yosida:ini}
\norm{\hat{\beta}_{\eps}(u_{0})}_{L^{1}(\Omega)} \leq \norm{\hat{\beta}(u_{0})}_{L^{1}(\Omega)} \leq C, \quad \norm{\hat{\beta}_{\Gamma, \eps}(\phi_{0})}_{L^{1}(\Gamma)} \leq \norm{\hat{\beta}_{\Gamma}(\phi_{0})}_{L^{1}(\Gamma)} \leq C.
\end{align}
Let $\{y_{j}\}_{j \in \N}$ be a basis that is orthonormal in $L^{2}(\Gamma)$ and orthogonal in $H^{1}(\Gamma)$.  For example, we take $\{y_{j}\}_{j \in \N}$ as the set of eigenfunctions to the Laplace--Beltrami operator on $\Gamma$, i.e., 
\begin{align*}
-\LB y_{j} = \mu_{j} y_{j} & \text{ on } \Gamma
\end{align*}
with associated eigenvalue $\mu_{j}$.  Then, we define $Y_{n} := \mathrm{span}\{y_{1}, \dots, y_{n}\}$ as the finite-dimensional subspaces spanned by the first $n$ basis functions, with corresponding projection operator $\Pi_{Y_{n}}$.  For the bulk variable, consider the Hilbert spaces
\begin{align*}
\mathcal{H} := L^{2}(\Omega) \times L^{2}(\Gamma), \quad \mathcal{V} := \{(a,b) \in H^{1}(\Omega) \times H^{\frac{1}{2}}(\Gamma) \; : \; a \vert_{\Gamma} = b \},
\end{align*}
equipped with the inner products
\begin{align*}
((r_{1},r_{2}),(s_{1},s_{2}))_{\mathcal{H}} & = (r_{1},s_{1})_{L^{2}(\Omega)} + K^{-1} (r_{2},s_{2})_{L^{2}(\Gamma)}, \\
(\bm{p}, \bm{q})_{\mathcal{V}} & = (p,q)_{H^{1}(\Omega)} \text{ for } \bm{p} = (p, p\vert_{\Gamma}), \bm{q} = (q, q\vert_{\Gamma}) \in \mathcal{V}.
\end{align*}
By the continuity of the trace operator $\mathrm{tr}: H^{1}(\Omega) \to H^{\frac{1}{2}}(\Gamma)$, and the compact embedding $H^{\frac{1}{2}}(\Gamma) \subset \subset L^{2}(\Gamma)$, we easily infer that $\mathcal{V}$ compactly embeds into $\mathcal{H}$.  Let us define the abstract operator $\mathcal{A}: \mathcal{V} \to \mathcal{V}^{*}$ by
\begin{align*}
\inner{\mathcal{A}\bm{p}}{\bm{q}} := \int_{\Omega} \nabla p \cdot \nabla q \dx + \int_{\Gamma} K^{-1} p \vert_{\Gamma} \; q \vert_{\Gamma} \dHaus \quad \text{ for } \bm{p} = (p, p\vert_{\Gamma}), \bm{q} = (q, q\vert_{\Gamma}) \in \mathcal{V}.
\end{align*}
Then, $\mathcal{A}$ is non-negative, self-adjoint and, by the generalized Poincar\'{e} inequality, $\mathcal{A}$ is also coercive on $\mathcal{V}$.  Furthermore, for $\bm{h} = (h_{1},h_{2}) \in \mathcal{H}$, the abstract equation $\mathcal{A}\bm{p} = \bm{h}$ is equivalent to solving the Poisson problem
\begin{align*}
-\Lap p = h_{1} \text{ in } \Omega, \quad \pdnu p + K^{-1}p = K^{-1} h_{2} \text{ on } \Gamma.
\end{align*}
The Lax--Milgram theorem yields that $\mathcal{A}^{-1} : \mathcal{H} \to \mathcal{V} \subset \subset \mathcal{H}$ is a compact operator.  For arbitrary $\bm{h} = (h_{1},h_{2})$, $\bm{g} = (g_{1},g_{2}) \in \mathcal{H}$, denote by $\bm{p} = \mathcal{A}^{-1}\bm{h}$, $\bm{q} = \mathcal{A}^{-1}\bm{g} \in \mathcal{V}$.  Then, self-adjointness of $\mathcal{A}^{-1}$ can be easily seen from the chain of equalities
\begin{align*}
(\mathcal{A}^{-1}\bm{h}, \bm{g})_{\mathcal{H}} & = \int_{\Omega} p g_{1} \dx + \int_{\Gamma} K^{-1} p \vert_{\Gamma} \; g_{2} \dHaus = \int_{\Omega} \nabla p \cdot \nabla q \dx +  \int_{\Gamma} K^{-1} p \vert_{\Gamma} \; q \vert_{\Gamma} \dHaus \\
& = \int_{\Omega} h_{1} q \dx + \int_{\Gamma} K^{-1} h_{2} \; q \vert_{\Gamma} \dHaus = (\bm{h}, \mathcal{A}^{-1}\bm{g})_{\mathcal{H}}.
\end{align*}
Hence, by the theory of compact operators, we obtain a countable set of eigenvalues $\{\lambda_{i}\}_{i \in \N}$ with corresponding eigenfunctions $\{w_{i}\}_{i \in \N} \subset \mathcal{V}$ to the eigenvalue problem
\begin{align*}
\inner{\mathcal{A}\bm{p}}{\bm{q}} = \lambda (\bm{p},\bm{q})_{\mathcal{H}} \quad \forall \bm{q} \in \mathcal{V},
\end{align*}
where $\{w_{i}\}_{i \in \N}$ forms an orthonormal basis of $\mathcal{H}$ and an orthogonal basis of $\mathcal{V}$.  In particular, the above eigenvalue problem translates to the following strong form
\begin{align*}
-\Lap w_{i} = \lambda_{i} w_{i} \text{ in } \Omega, \quad \pdnu w_{i} + K^{-1}w_{i} = \lambda_{i} K^{-1} w_{i} \text{ on } \Gamma.
\end{align*}
Together with the orthonormality in $\mathcal{H}$, i.e., $((w_{i}, w_{i} \vert_{\Gamma}), (w_{j}, w_{j} \vert_{\Gamma}))_{\mathcal{H}} = \delta_{ij}$ where $\delta_{ij}$ is the Kronecker delta, we obtain
\begin{align*}
\int_{\Omega} \nabla w_{i} \cdot \nabla w_{j} \dx + \int_{\Gamma} K^{-1} w_{i} w_{j} \dHaus = \lambda_{i} \delta_{ij}.
\end{align*}
Consider now the finite-dimensional subspace $W_{n} = \mathrm{span}\{w_{1}, \dots, w_{n}\}$ of $H^{1}(\Omega)$ spanned by the first $n$ eigenfunctions, and denote the associated projection operator as $\Pi_{W_{n}}$.  For our solution we seek functions
\begin{align*}
u_{n}^{\eps} := \sum_{i=1}^{n} a_{i}^{n}(t) w_{i}(x) \in W_{n}, \quad \phi_{n}^{\eps} := \sum_{j=1}^{n} b_{j}^{n}(t) y_{j}(x) \in Y_{n}
\end{align*}
to the following system
\begin{align*}
0 & = \int_{\Omega} \pd_{t}u_{n}^{\eps} w_{k} + \nabla u_{n}^{\eps} \cdot \nabla w_{k} + (\beta_{\eps}(u_{n}^{\eps}) + \pi_{n}(u_{n}^{\eps}) - f) w_{k} \dx + \int_{\Gamma} K^{-1}(u_{n}^{\eps} - h(\phi_{n}^{\eps})) w_{k} \dHaus, \\
0 & = \int_{\Gamma} \pd_{t}\phi_{n}^{\eps} y_{k} + \surf \phi_{n}^{\eps} \cdot \surf y_{k} + (\beta_{\Gamma,\eps}(\phi_{n}^{\eps}) + \pi_{\Gamma}(\phi_{n}^{\eps}) - f_{\Gamma} + h'(\phi_{n}^{\eps}) K^{-1}(h(\phi_{n}^{\eps}) - u_{n}^{\eps})) y_{k} \dHaus,
\end{align*}
for $k = 1, \dots, n$, with $u_{n}^{\eps}(0) := \Pi_{W_{n}}(u_{0})$ and $\phi_{n}^{\eps}(0) := \Pi_{Y_{n}}(\phi_{0})$.  This is equivalent to the system
\begin{equation}\label{ODE}
\begin{aligned}
0 & = \underline{M} (\bm{a}^{n})'(t) + \underline{D} \bm{a}^{n}(t) + \bm{L}_{\Omega}(\bm{a}^{n}(t),\bm{b}^{n}(t)), \quad \underline{M}_{ij} = \int_{\Omega} w_{i} w_{j} \dx, \quad \underline{D}_{ij} = \lambda_{i} \delta_{ij}, \\
0 & = (\bm{b}^{n})'(t) + \underline{S} \bm{b}^{n}(t) + \bm{L}_{\Gamma}(\bm{a}^{n}(t), \bm{b}^{n}(t)), \quad \underline{S}_{ij} = \int_{\Gamma} \surf y_{i} \cdot \surf y_{j} \dHaus,
\end{aligned}
\end{equation}
for vectors $\bm{a}^{n} = (a_{1}^{n}, \dots, a_{n}^{n})$ and $\bm{b}^{n} = (b_{1}^{n}, \dots, b_{n}^{n})$, and
\begin{align*}
(\bm{L}_{\Omega})_{k} & = \int_{\Omega} (\beta_{\eps}(u_{n}^{\eps}) + \pi_{n}(u_{n}^{\eps}) - f) w_{k} \dx - \int_{\Gamma} K^{-1} h(\phi_{n}^{\eps}) w_{k} \dHaus, \\
(\bm{L}_{\Gamma})_{k} & = \int_{\Gamma} (\beta_{\Gamma,\eps}(\phi_{n}^{\eps}) + \pi_{\Gamma}(\phi_{n}^{\eps}) - f_{\Gamma} + h'(\phi_{n}^{\eps}) K^{-1}(h(\phi_{n}^{\eps}) - u_{n}^{\eps})) y_{k} \dHaus.
\end{align*}
The matrix $\underline{M}$ is positive definite, since for any vector $\bm{z} = (z_{1}, \dots, z_{n})$ corresponding to $\zeta := \sum_{i=1}^{n} z_{i} w_{i} \in W_{n}$, it holds that
\begin{align*}
\underline{M} \bm{z} \cdot \bm{z} = \int_{\Omega} \sum_{i=1}^{n} z_{i} w_{i} \; \sum_{j=1}^{n} z_{j} w_{j} \dx = \norm{\zeta}_{L^{2}(\Omega)}^{2} \geq 0,
\end{align*}
and $\underline{M} \bm{z} \cdot \bm{z} = 0$ if and only if $\zeta = 0$ if and only if $\bm{z} = \bm{0}$.

By virtue of the Lipschitz continuity of $\beta_{\eps}$, $\pi$, $\beta_{\Gamma,\eps}$, $\pi_{\Gamma}$, $h$, $h'$ and the assumption that $f \in C^{0}(0,T;L^{2}(\Omega))$, $f_{\Gamma} \in C^{0}(0,T;L^{2}(\Gamma))$, the system of ordinary differential equations \eqref{ODE} in $2n$ unknowns has a right-hand side that is continuous in $\bm{a}^{n} = (a_{1}^{n}, \dots, a_{n}^{n})$, $\bm{b}^{n} = (b_{1}^{n}, \dots, b_{n}^{n})$ and in time.  Therefore, by the Cauchy--Peano theorem, for each $n \in \N$, there exists a $t_{n} \in (0,\infty]$ such that on $[0,t_{n})$ a local solution $(\bm{a}^{n}, \bm{b}^{n}) \in C^{1}([0,t_{n}); \R^{2n})$ exists.  For our computations below, it is more convenient to express \eqref{ODE} in the following form
\begin{align}
\pd_{t} u^{\eps}_{n} - \Lap u^{\eps}_{n} + \Pi_{W_{n}} \left ( \beta_{\eps}(u^{\eps}_{n}) + \pi(u^{\eps}_{n}) - f \right ) = 0 & \text{ in } \Omega, \label{Gal:bulk} \\
\pd_{t} \phi^{\eps}_{n} - \LB \phi^{\eps}_{n} + \Pi_{Y_{n}} \left ( \beta_{\Gamma,\eps}(\phi^{\eps}_{n}) + \pi_{\Gamma}(\phi^{\eps}_{n}) - f_{\Gamma} + h'(\phi^{\eps}_{n}) K^{-1}(h(\phi^{\eps}_{n}) - u^{\eps}_{n}) \right ) = 0 & \text{ on } \Gamma, \label{Gal:surf} \\
K \pdnu u^{\eps}_{n} + u^{\eps}_{n} - P_{n}( h(\phi^{\eps}_{n})) = 0 & \text{ on } \Gamma \label{Gal:Robin}, \\
u^{\eps}_{n}(0) = \Pi_{W_{n}}(u_{0}) \text{ in } \Omega, \quad \phi^{\eps}_{n}(0) = \Pi_{Y_{n}}(\phi_{0}) & \text{ on } \Gamma,
\end{align}
where $P_{n}$ is the second component of the projection operator from $\mathcal{H}$ to the subspace spanned by the vectors $(w_{i}, w_{i} \vert_{\Gamma})$, $i = 1, \dots, n$, i.e., 
\begin{align*}
\int_{\Gamma} P_{n}(h(\phi_{n}^{\eps})) w_{k} \dHaus = \int_{\Gamma} h(\phi_{n}^{\eps}) w_{k} \dHaus \quad \text{ for all } 1 \leq k \leq n.
\end{align*}

\subsection{Uniform estimates}\label{sec:UniEst}

In the following, the symbol $C$ will denote positive constants that are independent of $\eps$ and $n$, but can depend on $K$ and $T$.  Furthermore, we will drop the superscript $\eps$ on $u^{\eps}_{n}$ and $\phi^{\eps}_{n}$ for convenience, as it turns out that our a priori estimates are independent of $\eps$.

\paragraph{First estimate.}  Testing \eqref{Gal:bulk} with $\pd_{t}u_{n}$, \eqref{Gal:surf} with $\pd_{t} \phi_{n}$ and upon summing leads to
\begin{equation}\label{EnergyEst}
\begin{aligned}
& \frac{\dd}{\dt} E_n(t) + \int_{\Omega} \abs{\pd_{t} u_{n}}^{2} \dx + \int_{\Gamma} \abs{\pd_{t}\phi_{n}}^{2} \dHaus \\
& \quad = \int_{\Omega} f \pd_{t} u_{n} \dx + \int_{\Gamma} f_{\Gamma} \pd_{t}\phi_{n} \dHaus \\
& \quad \leq \frac{1}{2} \Big ( \norm{f}_{L^2(\Omega)}^2 + \norm{f_{\Gamma}}_{L^2(\Gamma)}^2 + \norm{\pd_{t} u_n}_{L^2(\Omega)}^2 + \norm{\pd_t \phi_n}_{L^2(\Gamma)}^2 \Big),
\end{aligned}
\end{equation}
where
\begin{align*}
E_n(t) & : = \int_{\Omega} \frac{1}{2} \abs{\nabla u_{n}(t)}^{2} + W_{\eps}(u_{n}(t)) \dx + \int_{\Gamma} \frac{1}{2} \abs{\surf \phi_{n}(t)}^{2} + W_{\Gamma,\eps}(\phi_{n}(t)) \dHaus \\
& \quad + \int_{\Gamma} \frac{1}{2K} \abs{u_{n}(t) - h(\phi_{n}(t))}^{2} \dHaus, \\
W_{\eps}(u_{n}) & := \hat{\beta}_{\eps}(u_{n}) + \hat{\pi}(u_{n}), \quad W_{\Gamma,\eps}(\phi_{n}) := \hat{\beta}_{\Gamma,\eps}(\phi_{n}) + \hat{\pi}_{\Gamma}(\phi_{n}).
\end{align*}
Thanks to \eqref{Yosida:ini} and the fact that $\norm{u_{n}(0)}_{H^{1}(\Omega)} \leq C \norm{u_{0}}_{H^{1}(\Omega)}$, $\norm{\phi_{n}(0)}_{H^{1}(\Gamma)} \leq C \norm{\phi_{0}}_{H^{1}(\Gamma)}$, we infer the uniform boundedness of the initial energy $E_n(0)$.  Then, using \eqref{ass:f} we have for any $T \in (0,\infty)$,
\begin{equation}\label{Apri:1a}
\begin{aligned}
& \norm{\nabla u_{n}}_{L^{\infty}(0,T ;L^{2}(\Omega))}^2 + \norm{\pd_{t}u_{n}}_{L^{2}(0,T ;L^{2}(\Omega))}^2 + \norm{W_{\eps}(u_{n})}_{L^{\infty}(0,T ;L^{1}(\Omega))} \\
& \quad +  \norm{\surf \phi_{n}}_{L^{\infty}(0,T ;L^{2}(\Gamma))}^2 + \norm{\pd_{t}\phi_{n}}_{L^{2}(0,T ;L^{2}(\Gamma))}^2 + \norm{W_{\Gamma,\eps}(\phi_{n})}_{L^{\infty}(0,T ;L^{1}(\Gamma))} \\
& \quad + K^{-1} \norm{u_{n} - h(\phi_{n})}_{L^{\infty}(0,T ;L^{2}(\Gamma))}^2 \leq C \Big (1 + \norm{f}_{L^2(Q)}^2 + \norm{f_{\Gamma}}_{L^2(\Sigma)}^2 \Big ) \leq C.
\end{aligned}
\end{equation}
Furthermore, for any $s > 0$ it holds that
\begin{equation}\label{Linfty:L2}
\begin{aligned}
\frac{1}{2} \norm{u_{n}(s)}_{L^{2}(\Omega)}^{2} & = \int_{0}^{s} \! \!\int_{\Omega} u_{n} \, \pd_{t} u_{n} \dx \dt + \frac{1}{2} \norm{u_{n}(0)}_{L^{2}(\Omega)}^{2} \\
& \leq \frac{1}{2} \norm{\pd_{t}u_{n}}_{L^{2}(0,s;L^{2}(\Omega))}^{2} + \frac{1}{2} \norm{u_{n}}_{L^{2}(0,s;L^{2}(\Omega))}^{2} + \frac{1}{2} \norm{u_{n}(0)}_{L^{2}(\Omega)}^{2},
\end{aligned}
\end{equation}
and by a Gronwall argument one obtains for any $T \in (0,\infty)$,
\begin{align}\label{Apri:1b}
\norm{u_{n}}_{L^{\infty}(0,T;L^{2}(\Omega))} + \norm{\phi_{n}}_{L^{\infty}(0,T;L^{2}(\Gamma))} \leq C.
\end{align}
\begin{remark}\label{rmk:Unif:K:est}
If the initial conditions $u_{0}$ and $\phi_{0}$ fulfil the assumption
\begin{align*}
\norm{u_{0} - h(\phi_{0})}_{L^{2}(\Gamma)}^{2} \leq CK,
\end{align*}
for a positive constant $C$ not depending on $K$, then it is clear that
\begin{align*}
\norm{u_{n}(0) - h(\phi_{n}(0))}_{L^{2}(\Gamma)}^{2} \leq CK,
\end{align*}
and thus the initial energy $E(u_{n}(0), \phi_{n}(0))$ is also bounded uniformly in $K$.  As a consequence, the a priori estimates \eqref{Apri:1a} and \eqref{Apri:1b} are also uniform in $K$.  This will be relevant in Section \ref{sec:limit} when we pass to the limit as $K \to 0$.
\end{remark}

\paragraph{Second estimate.} Taking the time derivative of \eqref{Gal:bulk} and testing with $\pd_{t}u_{n}$ leads to
\begin{equation}\label{Est:2a}
\begin{aligned}
& \frac{\dd}{\dt} \int_{\Omega} \frac{1}{2} \abs{\pd_{t} u_{n}}^{2} \dx + \int_{\Omega} \abs{\nabla \pd_{t} u_{n}}^{2} + \beta_{\eps}'(u_{n}) \abs{\pd_{t}u_{n}}^{2} \dx + \frac{1}{2}K^{-1} \int_{\Gamma} \abs{\pd_{t} u_{n}}^{2} \dHaus \\
& \quad \leq \norm{\pi'}_{L^{\infty}(\R)} \norm{\pd_{t} u_{n}}_{L^{2}(\Omega)}^{2} + \norm{\pd_{t}f}_{L^{2}(\Omega)} \norm{\pd_{t} u_{n}}_{L^{2}(\Omega)} \\
& \qquad + \frac{1}{2}K^{-1} \norm{h'}_{L^{\infty}(\R)}^{2} \norm{\pd_{t} \phi_{n}}_{L^{2}(\Gamma)}^{2}.
\end{aligned}
\end{equation}
Here we used the fact that $\pd_{t} u_{n} \in W_{n}$ so that $\Pi_{W_{n}}(\pd_{t} u_{n}) = \pd_{t} u_{n}$ and
\begin{align*}
\int_{\Omega} \Pi_{W_{n}} \left ( \beta_{\eps}'(u_{n}) \pd_{t} u_{n} \right ) \pd_{t} u_{n} \dx = \int_{\Omega}  \beta_{\eps}'(u_{n}) \pd_{t} u_{n} \Pi_{W_{n}} \left ( \pd_{t} u_{n} \right ) \dx = \int_{\Omega} \beta_{\eps}'(u_{n}) \abs{\pd_{t} u_{n}}^{2} \dx .
\end{align*}
Due to the convexity of $\hat{\beta}_{\eps}$ , its second derivative $\beta_{\eps}'$ is non-negative, and in addition, as \eqref{ass:f} and \eqref{Apri:1a} imply that the right-hand side of \eqref{Est:2a} is bounded in $L^{1}(0,T)$, we obtain that
\begin{align}\label{Apri:2a}
\norm{\pd_{t} u_{n}}_{L^{\infty}(0,T; L^{2}(\Omega)) \cap L^{2}(0,T; H^{1}(\Omega))} \leq C.
\end{align}
\begin{remark}\label{rem:approx:ini}
In the above, we set
\begin{align*}
\pd_{t} u_{n}(0) := \Lap u_{n}(0) - \Pi_{W_{n}} \left (\beta_{\eps}(u_{0}) + \pi(u_{0}) - f(0) \right ).
\end{align*}
By the orthonormality of the basis functions $\{w_{i}\}_{i \in \N}$ in $L^{2}(\Omega)$ and the compatibility condition in \eqref{ass:ini} we have 
\begin{align*}
\norm{\pd_{t} u_{n}(0)}_{L^{2}(\Omega)} & \leq \norm{\Lap u_{0}}_{L^{2}(\Omega)} + \norm{\beta_{\eps}(u_{0})}_{L^{2}(\Omega)} + \norm{\pi(u_{0})}_{L^{2}(\Omega)} + \norm{f(0)}_{L^{2}(\Omega)} \\
& \leq C \left ( 1 + \norm{f(0)}_{L^{2}(\Omega)} + \norm{u_{0}}_{H^{2}(\Omega)} \right ),
\end{align*} 
where in the above we used the fact that the Yosida approximation $\beta_{\eps}$ satisfies the property $\beta_{\eps}(u_{0}) \rightharpoonup \beta^{\circ}(u_{0})$ in $L^{2}(\Omega)$ as $\eps \to 0$, and so $\norm{\beta_\eps(u_0)}_{L^2(\Omega)} \leq C$ for all $\eps \in (0,1)$.  Hence $\norm{\pd_{t}u_{n}(0)}_{L^{2}(\Omega)}$ is uniformly bounded in $n$ and~$\eps$.  
\end{remark}
In a similar fashion, taking the time derivative of \eqref{Gal:surf} and testing with $\pd_{t} \phi_{n}$ leads to
\begin{equation}\label{Est:2b}
\begin{aligned}
& \frac{\dd}{\dt} \int_{\Gamma} \frac{1}{2} \abs{\pd_{t} \phi_{n}}^{2} \dHaus + \int_{\Gamma} \abs{\surf \pd_{t} \phi_{n}}^{2} + \beta_{\Gamma,\eps}'(\phi_{n}) \abs{\pd_{t} \phi_{n}}^{2} + K^{-1} \abs{\pd_{t} h(\phi_{n})}^{2} \dHaus \\
& \quad \leq \norm{\pi_{\Gamma}'}_{L^{\infty}(\R)} \norm{\pd_{t} \phi_{n}}_{L^{2}(\Gamma)}^{2} + \norm{\pd_{t} f_{\Gamma}}_{L^{2}(\Gamma)} \norm{\pd_{t} \phi_{n}}_{L^{2}(\Gamma)} \\
& \qquad + \norm{h'}_{L^{\infty}(\R)} K^{-1} \norm{\pd_{t} u_{n}}_{L^{2}(\Gamma)} \norm{\pd_{t} \phi_{n}}_{L^{2}(\Gamma)} \\
& \qquad + \norm{h''}_{L^{\infty}(\R)} K^{-1} \norm{h(\phi_{n}) - u_{n}}_{L^{2}(\Gamma)} \norm{\pd_{t} \phi_{n}}_{L^{4}(\Gamma)}^{2}.
\end{aligned}
\end{equation}
In two dimensions we have the Gagliardo--Nirenberg inequality 
\begin{align}\label{GNineq}
\norm{\theta}_{L^{4}(\Gamma)}^{2} \leq C \norm{\theta}_{L^{2}(\Gamma)} \norm{\surf \theta}_{L^{2}(\Gamma)} + C \norm{\theta}_{L^{2}(\Gamma)}^{2},
\end{align}
and hence, on account of the boundedness of $h(\phi_{n}) - u_{n}$ in $L^{\infty}(0,T;L^{2}(\Gamma))$, the last term on the right-hand side of \eqref{Est:2b} can be estimated as
\begin{align*}
\norm{h(\phi_{n}) - u_{n}}_{L^{2}(\Gamma)} \norm{\pd_{t} \phi_{n}}_{L^{4}(\Gamma)}^{2} \leq C \norm{\pd_{t}\phi_{n}}_{L^{2}(\Gamma)}^{2} + \frac{1}{2} \norm{\surf \pd_{t} \phi_{n}}_{L^{2}(\Gamma)}^{2}.
\end{align*}
Then, upon integrating in time and using that $\pd_{t} f_{\Gamma}, \pd_{t} \phi_{n}, \pd_{t} u_{n}$ are bounded in $L^{2}(0,T;L^{2}(\Gamma))$ we obtain
\begin{align}
\label{Apri:2b}
\norm{\pd_{t} \phi_{n}}_{L^{\infty}(0,T;L^{2}(\Gamma)) \cap L^{2}(0,T;H^{1}(\Gamma))} + \norm{\pd_{t} h(\phi_{n})}_{L^{2}(0,T;L^{2}(\Gamma))} \leq C,
\end{align}
where we set
\begin{align*}
\pd_{t} \phi_{n}(0) = \LB \phi_{n}(0)- \Pi_{Y_{n}} \left ( \beta_{\Gamma,\eps}(\phi_{0}) + \pi_{\Gamma}(\phi_{0}) - f_{\Gamma}(0) + h'(\phi_{0}) K^{-1}(h(\phi_{0}) - u_{0}) \right ),
\end{align*}
and use the orthonormality of the basis functions $\{y_{j}\}_{j \in \N}$ in $L^{2}(\Gamma)$ and a similar argument to Remark \ref{rem:approx:ini} to show that $\norm{\pd_{t} \phi_{n}(0)}_{L^{2}(\Gamma)} \leq C$.  In light of \eqref{Apri:2a}, \eqref{Apri:2b} and the trace theorem, by the relation $\pdnu u_{n} = K^{-1}(h(\phi_{n}) - u_{n})$ we find that
\begin{align}\label{pdnu:un:est}
\norm{\pdnu u_{n}}_{H^{1}(0,T;L^{2}(\Gamma))} \leq C.
\end{align}
\subsection{Passing to the limit $n \to \infty$}
We now pass to the limit $n \to \infty$.  Thanks to the uniform estimates \eqref{Apri:1a}, \eqref{Apri:1b}, \eqref{Apri:2a}, \eqref{Apri:2b}, and \eqref{pdnu:un:est}, we obtain a pair of limit functions $(u^{\eps},\phi^{\eps})$ satisfying
\begin{equation*}
\begin{alignedat}{5}
u_{n} & \to u^{\eps} && \text{ weakly-* in } && L^{\infty}(0,T;H^{1}(\Omega)), \\
\pd_{t}u_{n} & \to \pd_{t}u^{\eps} && \text{ weakly-* in } && L^{\infty}(0,T;L^{2}(\Omega)) \cap L^{2}(0,T;H^{1}(\Omega)), \\
u_{n} & \to u^{\eps} && \text{ strongly in } && C^{0}([0,T];L^{q}(\Omega)) \text{ for } q < 6, \text{ and a.e. in } Q, \\
\phi_{n} & \to \phi^{\eps} && \text{ weakly-* in } && L^{\infty}(0,T;H^{1}(\Gamma)), \\
\pd_{t}\phi_{n} & \to \pd_{t}\phi^{\eps} && \text{ weakly-* in } && L^{\infty}(0,T;L^{2}(\Gamma)) \cap L^{2}(0,T;H^{1}(\Gamma)), \\
\phi_{n} & \to \phi^{\eps} && \text{ strongly in } && C^{0}([0,T];L^{r}(\Omega)) \text{ for } r < \infty, \text{ and a.e. in } \Sigma,
\end{alignedat}
\end{equation*}
with $u^{\eps}(0) = u_{0}$ in $L^{2}(\Omega)$, $\phi^{\eps}(0) = \phi_{0}$ in $L^{2}(\Gamma)$, and
\begin{subequations}
\begin{alignat}{3}
0 & = \int_{\Omega} (\pd_{t}u^{\eps} + \beta_{\eps}(u^{\eps}) + \pi(u^{\eps}) - f) \zeta + \nabla u^{\eps} \cdot \nabla \zeta \dx + \int_{\Gamma} K^{-1} (u^{\eps} - h(\phi^{\eps})) \zeta \dHaus, \label{Gal:u:eps} \\
0 & = \int_{\Gamma} (\pd_{t} \phi^{\eps} + \beta_{\Gamma,\eps}(\phi^{\eps}) + \pi_{\Gamma}(\phi^{\eps}) - f_{\Gamma} + h'(\phi^{\eps}) K^{-1}(h(\phi^{\eps}) - u^{\eps})) \mu + \surf \phi^{\eps} \cdot \surf \mu \dHaus, \label{Gal:phi:eps}
\end{alignat}
\end{subequations}
for a.e. $t \in (0,T)$, and for all $\zeta \in H^{1}(\Omega)$ and $\mu \in H^{1}(\Gamma)$.  Furthermore, by weak/weak-* lower semicontinuity of the Bochner norms, it holds that there exists a positive constant $C$, independent of $\eps$ and $n$, such that
\begin{equation}\label{Apri:unif:n}
\begin{aligned}
& \norm{u^{\eps}}_{L^{\infty}(0,T;H^{1}(\Omega))} + \norm{\pd_{t}u^{\eps}}_{L^{\infty}(0,T;L^{2}(\Omega)) \cap L^{2}(0,T;H^{1}(\Omega))} \\
& \quad + \norm{\phi^{\eps}}_{L^{\infty}(0,T;H^{1}(\Gamma))} + \norm{\pd_{t} \phi^{\eps}}_{L^{\infty}(0,T;L^{2}(\Gamma)) \cap L^{2}(0,T;H^{1}(\Gamma))} \\
& \quad + K^{-\frac{1}{2}} \norm{u^{\eps} - h(\phi^{\eps})}_{L^{\infty}(0,T;L^{2}(\Gamma))} + \norm{\pdnu u^{\eps}}_{H^{1}(0,T;L^{2}(\Gamma))} \leq C.
\end{aligned}
\end{equation}

\subsection{Additional uniform estimates}
Aside from \eqref{Apri:unif:n}, we derive additional uniform estimates for the pair $(u^{\eps},\phi^{\eps})$.  Since $\beta_{\Gamma,\eps}$ is Lipschitz continuous with Lipschitz constant $\eps^{-1}$, we can consider $\mu = \beta_{\Gamma,\eps}(\phi^{\eps})$ in \eqref{Gal:phi:eps} and obtain
\begin{equation}\label{Est:3a}
\begin{aligned}
& \int_{\Gamma} \beta_{\Gamma,\eps}'(\phi^{\eps}) \abs{\surf \phi^{\eps}}^{2} + \frac{1}{2} \abs{\beta_{\Gamma,\eps}(\phi^{\eps})}^{2} \dHaus \\
& \quad \leq C \left (\norm{\pd_{t} \phi^{\eps}}_{L^{2}(\Gamma)}^{2} + \norm{\pi_{\Gamma}(\phi^{\eps})}_{L^{2}(\Gamma)}^{2} + \norm{h'}_{L^{\infty}(\R)}^{2} \norm{h(\phi^{\eps}) - u^{\eps}}_{L^{2}(\Gamma)}^{2} + \norm{f_{\Gamma}}_{L^{2}(\Gamma)}^{2} \right ).
\end{aligned}
\end{equation}
Using the embedding $H^{1}(0,T) \subset L^{\infty}(0,T)$ for $\norm{f_{\Gamma}}_{L^{2}(\Gamma)}$, Lipschitz continuity of $\pi_{\Gamma}(\cdot)$ and \eqref{Apri:unif:n}, the right-hand side of \eqref{Est:3a} is bounded in $L^{\infty}(0,T)$ for any $T \in (0,\infty)$.  This shows that $\beta_{\Gamma,\eps}(\phi^{\eps})$ is bounded in $L^{\infty}(0,T;L^{2}(\Gamma))$.  Then, viewing \eqref{Gal:phi:eps} as the variational formulation for the following elliptic equation:
\begin{align*}
-\LB \phi^{\eps} = -\pd_{t}\phi^{\eps} -  \beta_{\Gamma,\eps}(\phi^{\eps}) - \pi_{\Gamma}(\phi^{\eps}) + f_{\Gamma} - h'(\phi^{\eps}) K^{-1}(h(\phi^{\eps}) - u^{\eps}) \text{ on } \Gamma,
\end{align*}
with a right-hand side belonging to $L^{\infty}(0,T;L^{2}(\Gamma))$ for any $T \in (0,\infty)$, we obtain altogether
\begin{align}\label{Apri:3a}
\norm{\phi^{\eps}}_{L^{\infty}(0,T;H^{2}(\Gamma))} + \norm{\beta_{\Gamma,\eps}(\phi^{\eps})}_{L^{\infty}(0,T;L^{2}(\Gamma))} \leq C
\end{align}
for any $T \in (0,\infty)$.  Similarly, considering $\zeta = \beta_{\eps}(u^{\eps})$ in \eqref{Gal:u:eps} leads to
\begin{equation}\label{Est:3b}
\begin{aligned}
& \int_{\Omega} \beta_{\eps}'(u^{\eps}) \abs{\nabla u^{\eps}}^{2} + \frac{1}{2} \abs{\beta_{\eps}(u^{\eps})}^{2} \dx + \int_{\Gamma} K^{-1} u^{\eps} \beta_{\eps}(u^{\eps}) \dHaus \\
& \quad \leq C \left ( \norm{\pd_{t} u^{\eps}}_{L^{2}(\Omega)}^{2} + \norm{\pi(u^{\eps})}_{L^{2}(\Omega)}^{2} + \norm{f}_{L^{2}(\Omega)}^{2} \right ) \\
& \qquad + C K^{-1} \norm{h(\phi^{\eps})}_{L^{\infty}(\Gamma)} \norm{\beta_{\eps}(u^{\eps})}_{L^{1}(\Gamma)}.
\end{aligned}
\end{equation}
Let us now recall the resolvent $\mathcal{J}_{\eps}$ which is a Lipschitz operator (with Lipschitz constant 1) defined as $\mathcal{J}_{\eps} := (\mathrm{I} + \eps \beta)^{-1}$ so that for all $\eps > 0$,
\begin{align}\label{resolvent}
g = \mathcal{J}_{\eps}(g) + \eps \beta_{\eps}(g) \quad \text{ and } \quad \beta_{\eps}(g) \in  \beta \left ( \mathcal{J}_{\eps}(g) \right ).
\end{align}
Taking $\xi = \beta_{\eps}(u^{\eps})$ and $u = \mathcal{J}_{\eps}(u^{\eps})$ in \eqref{ass:beta}, for any $\delta > 0$ there exists a positive constant $C_{\delta} > 0$ such that
\begin{equation}\label{ass:beta:Yosi}
\begin{aligned}
\abs{\beta_{\eps}(u^{\eps})} & \leq \delta \, \mathcal{J}_{\eps}(u^{\eps}) \, \beta_{\eps}(u^{\eps}) + C_{\delta} \\
& \leq \delta \abs{u^{\eps}} \abs{\beta_{\eps}(u^{\eps})} + C_{\delta} = \delta \, u^{\eps} \, \beta_{\eps}(u^{\eps}) + C_{\delta}
\end{aligned}
\end{equation}
where the second inequality follows from the Lipschitz property of $\mathcal{J}_{\eps}$ and $\mathcal{J}_{\eps}(0) = 0$, and the subsequent equality follows from the fact that $u^{\eps}$ and $\beta_{\eps}(u^{\eps})$ have the same sign due to the monotonicity of $\beta_{\eps}$.  Hence, choosing $\delta$ sufficiently small, from \eqref{Est:3b} we obtain
\begin{equation}\label{Est:3b:Sim}
\begin{aligned}
& \norm{\beta_{\eps}(u^{\eps})}_{L^{2}}^{2} + \int_{\Gamma} u^{\eps} \, \beta_{\eps}(u^{\eps}) \dHaus \\
& \quad \leq C \left ( \norm{h(\phi^{\eps})}_{L^{\infty}(\Gamma)} + \norm{\pd_{t} u^{\eps}}_{L^{2}(\Omega)}^{2} + \norm{\pi(u^{\eps})}_{L^{2}(\Omega)}^{2} + \norm{f}_{L^{2}(\Omega)}^{2} \right ).
\end{aligned}
\end{equation}
Using that $\phi^{\eps}$ is bounded in $L^{\infty}(0,T;H^{2}(\Gamma))$ and hence also in $L^{\infty}(0,T;L^{\infty}(\Gamma))$, as well as $f \in L^{\infty}(0,T;L^{2}(\Omega))$ and the estimate \eqref{Apri:unif:n}, we infer that the right-hand side of \eqref{Est:3b:Sim} is bounded in $L^{\infty}(0,T)$ for any $T \in (0,\infty)$.  As $u^{\eps} \, \beta_{\eps}(u^{\eps})$ is non-negative (due to the monotonicity of $\beta_{\eps}$), this implies that $\beta_{\eps}(u^{\eps})$ is bounded in $L^{\infty}(0,T;L^{2}(\Omega))$ for any $T \in (0,\infty)$.  Then, viewing \eqref{Gal:u:eps} as the variational formulation of an elliptic equation for $u^{\eps}$ with normal derivative $\pdnu u^{\eps} = K^{-1}(h(\phi^{\eps}) - u^{\eps})$ belonging to $L^{\infty}(0,T;H^{\frac{1}{2}}(\Gamma))$ and a right-hand side belonging to $L^{\infty}(0,T;L^{2}(\Omega))$ for any $T \in(0,\infty)$, we infer from \cite[Theorem 3.2, p.~1.79]{Brezzi} (see also Theorem \ref{Brezzi:reg}) that $u^{\eps}$ is bounded in $L^{\infty}(0,T;H^2(\Omega))$ together with the uniform estimate
\begin{align}
\label{Apri:3b}
\norm{u^{\eps}}_{L^{\infty}(0,T;H^{2}(\Omega))} + \norm{\beta_{\eps}(u^{\eps})}_{L^{\infty}(0,T;L^{2}(\Omega))} \leq C.
\end{align}

\subsection{Passing to the limit $\eps \to 0$} 
The uniform estimates \eqref{Apri:unif:n}, \eqref{Apri:3a} and \eqref{Apri:3b} are sufficient to allow us to pass to the limit $\eps \to 0$ and obtain a pair of limit functions $(u, \phi)$ that inherits the regularities stated in Theorem \ref{thm:Exist} and a pair of functions $(\xi, \xi_{\Gamma})$ such that
\begin{equation*}
\begin{alignedat}{5}
\beta_{\eps}(u^{\eps}) & \to \xi  && \text{ weakly-* in } L^{\infty}(0,T;L^{2}(\Omega)), \\
\beta_{\Gamma,\eps}(\phi^{\eps})  & \to \xi_{\Gamma} && \text{ weakly-* in } L^{\infty}(0,T;L^{2}(\Gamma)),
\end{alignedat}
\end{equation*}
as $\eps \to 0$.  Furthermore, passing to the limit in \eqref{Gal:u:eps} and \eqref{Gal:phi:eps} shows that the quadruple $(u,\phi,\xi,\xi_{\Gamma})$ satisfies \eqref{ACAC:gen}.  It remains to show that $\xi \in \beta(u)$ a.e.~in $Q$ and $\xi_{\Gamma} \in \beta_{\Gamma}(\phi)$ a.e.~on $\Sigma$, and for this we refer the reader to the argument presented near the end of \cite[Section 5.2]{GL}.  The basic idea is to use the strong convergence of $u^{\eps}$ to $u$ in $L^{2}(0,T;L^{2}(\Omega))$, the identity $u^{\eps} = \mathcal{J}_{\eps} (u^{\eps}) + \eps \beta_{\eps}(u^{\eps})$ from \eqref{resolvent} and the boundedness of $\beta_{\eps}(u^{\eps})$ in $L^{2}(0,T;L^{2}(\Omega))$ to deduce that $\mathcal{J}_{\eps} (u^{\eps}) \to u$ strongly in $L^{2}(0,T;L^{2}(\Omega))$.  Then, as $\beta_{\eps}(u^{\eps}) \in \beta (\mathcal{J}_{\eps}(u_{\eps}))$, monotonicity of $\beta$ shows that for arbitrary $y \in D(\beta)$ and $z \in \beta(y)$,
\begin{align*}
\int_{Q} (z - \beta_{\eps}(u^{\eps})) (y - \mathcal{J}_{\eps}(u^{\eps})) \dx \dt \geq 0 \quad \underset{\eps \to 0}{\Longrightarrow} \quad \int_{Q} (z - \xi)(y - u) \dx \dt \geq 0.
\end{align*} 
By definition of the maximal monotonicity, this yields $\xi \in \beta(u)$ a.e.~in $Q$.  For further details, we refer the reader to the proof of \cite[Lemma 1.3(e)]{BCP}.

\section{Omega-limit set}\label{sec:longtime}

\subsection{Non-emptiness of the omega-limit set} 

Testing \eqref{ACAC:bulk} with $\pd_{t} u$ and \eqref{ACAC:surf} with $\pd_{t} \phi$, integrating in time leads to an analogous identity to \eqref{EnergyEst}.  Then applying Young's inequality leads to 
\begin{align*}
& \frac{d}{dt} \left ( \int_{\Omega} \frac{1}{2} \abs{\nabla u}^{2} + W(u) \dx + \int_{\Gamma} \frac{1}{2} \abs{\surf \phi}^{2} + W_{\Gamma}(\phi) + \frac{1}{2K} \abs{u - h(\phi)}^{2} \dHaus \right ) \\
& \quad + \int_{\Omega} \frac{1}{2} \abs{\pd_{t} u}^{2} \dx + \int_{\Gamma} \frac{1}{2} \abs{\pd_{t} \phi}^{2} \dHaus \leq \int_{\Omega} \frac{1}{2} \abs{f}^{2} \dx + \int_{\Gamma} \frac{1}{2} \abs{f_{\Gamma}}^{2} \dHaus.
\end{align*}
On account of \eqref{ass:long:W} and \eqref{ass:long:f}, we immediately infer from integrating the above inequality the following uniform-in-time estimates:
\begin{equation}\label{Lt:1}
\begin{aligned}
& \norm{u}_{L^{\infty}(0,\infty;H^{1}(\Omega))} + \norm{\phi}_{L^{\infty}(0,\infty;H^{1}(\Gamma))} + K^{-\frac{1}{2}} \norm{u - h(\phi)}_{L^{\infty}(0,\infty;L^{2}(\Gamma))} \\
& \quad + \norm{\pd_{t}u}_{L^{2}(0,\infty;L^{2}(\Omega))} + \norm{\pd_{t}\phi}_{L^{2}(0,\infty;L^{2}(\Gamma))} \leq C.
\end{aligned}
\end{equation}
From this estimate the omega-limit set
\begin{equation}\label{omegalimit}
\begin{aligned}
\omega := \Big{\{} (u_{\infty}, \phi_{\infty}) & \, : \, \exists \{t_{k}\}_{k \in \N}, \, t_k > 0, \, t_{k} \nearrow \infty \\
& \text{ and } (u(t_{k}), \phi(t_{k})) \to (u_{\infty}, \phi_{\infty}) \text{ weakly in } H^{1}(\Omega) \times H^{1}(\Gamma) \Big{\}}
\end{aligned}
\end{equation}
is non-empty.  The aim of this section is to show that if $(u_{\infty}, \phi_{\infty}) \in \omega$ then $(u_{\infty}, \phi_{\infty})$ is a solution to the stationary problem \eqref{stationary}.

\subsection{Additional uniform-in-time estimates}

We require additional uniform estimates on the time interval $(0,\infty)$ before proceeding with the proof of Theorem \ref{thm:Lt}.  In light of the boundedness of $\pd_{t}u$ and $\pd_{t} \phi$ in $L^{2}(0,\infty;X)$ for $X = L^{2}(\Omega)$ and $X = L^{2}(\Gamma)$, respectively, it turns out that there are analogues of the estimates \eqref{Apri:2a}, \eqref{Apri:2b} and \eqref{pdnu:un:est} that hold on the time interval $(0,\infty)$.  This is summarized in the following lemma.

\begin{lemma}\label{lem:unif:time}
Under the hypothesis of Theorem \ref{thm:Lt}, the unique strong solution $(u,\phi)$ to \eqref{ACAC:gen} satisfies in addition to \eqref{Lt:1},
\begin{align*}
\pd_{t}u & \in L^{\infty}(0,\infty; L^{2}(\Omega)) \cap L^{2}(0,\infty;H^{1}(\Omega)), \quad \pdnu u \in H^{1}(0,\infty;L^{2}(\Gamma)), \\
\pd_{t}\phi & \in L^{\infty}(0,\infty;L^{2}(\Gamma)) \cap L^{2}(0,\infty;H^{1}(\Gamma)).
\end{align*}
\end{lemma}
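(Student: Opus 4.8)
The plan is to reproduce the ``second estimate'' computations \eqref{Est:2a}--\eqref{Est:2b} from Section \ref{sec:exist}, but now to integrate over the whole half-line $(0,\infty)$ rather than over a finite interval $(0,T)$, exploiting the fact that \eqref{Lt:1} furnishes genuine $L^{2}(0,\infty;L^{2})$ bounds on $\pd_{t}u$ and $\pd_{t}\phi$. Since a unique strong solution exists on every finite interval by Theorems \ref{thm:Exist} and \ref{thm:Ctsdep}, these computations may be carried out at the level of the Galerkin approximations and the resulting constants checked to be independent of the time horizon.

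First I would differentiate \eqref{ACAC:bulk} and \eqref{ACAC:surf} in time, test with $\pd_{t}u$ and $\pd_{t}\phi$ respectively, and add, arriving at the analogues of \eqref{Est:2a} and \eqref{Est:2b}. Upon summing, the crucial observation is that the bulk--surface coupling term $\norm{h'}_{L^{\infty}(\R)} K^{-1} \norm{\pd_{t} u}_{L^{2}(\Gamma)} \norm{\pd_{t}\phi}_{L^{2}(\Gamma)}$ on the right of \eqref{Est:2b} can be split by Young's inequality into a sufficiently small multiple of $K^{-1}\norm{\pd_{t}u}_{L^{2}(\Gamma)}^{2}$, which is absorbed by the boundary dissipation $\tfrac{1}{2}K^{-1}\int_{\Gamma}\abs{\pd_{t}u}^{2}\dHaus$ appearing on the left of \eqref{Est:2a}, plus a multiple of $K^{-1}\norm{\pd_{t}\phi}_{L^{2}(\Gamma)}^{2}$.

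Next I would verify that every remaining term on the right is integrable on $(0,\infty)$: the quadratic terms $\norm{\pd_{t}u}_{L^{2}(\Omega)}^{2}$ and $\norm{\pd_{t}\phi}_{L^{2}(\Gamma)}^{2}$ lie in $L^{1}(0,\infty)$ by \eqref{Lt:1}; the forcing products $\norm{\pd_{t}f}_{L^{2}(\Omega)}\norm{\pd_{t}u}_{L^{2}(\Omega)}$ and $\norm{\pd_{t}f_{\Gamma}}_{L^{2}(\Gamma)}\norm{\pd_{t}\phi}_{L^{2}(\Gamma)}$ lie in $L^{1}(0,\infty)$ by Cauchy--Schwarz together with \eqref{ass:long:f} and \eqref{Lt:1}; and the $h''$-term is controlled, via the Gagliardo--Nirenberg inequality \eqref{GNineq} and the uniform bound on $\norm{h(\phi)-u}_{L^{\infty}(0,\infty;L^{2}(\Gamma))}$ from \eqref{Lt:1}, by $C\norm{\pd_{t}\phi}_{L^{2}(\Gamma)}^{2}+\tfrac{1}{2}\norm{\surf\pd_{t}\phi}_{L^{2}(\Gamma)}^{2}$, whose second summand is absorbed into the surface dissipation. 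Integrating the resulting differential inequality from $0$ to any $t<\infty$ and using the uniform bounds on the initial data $\pd_{t}u(0)$, $\pd_{t}\phi(0)$ (cf.\ Remark \ref{rem:approx:ini} and the analogous estimate for $\pd_{t}\phi(0)$ recorded after \eqref{Apri:2b}), I would obtain, after passing to the limits $n\to\infty$ and $\eps\to0$, the claimed bounds $\pd_{t}u\in L^{\infty}(0,\infty;L^{2}(\Omega))\cap L^{2}(0,\infty;H^{1}(\Omega))$ and $\pd_{t}\phi\in L^{\infty}(0,\infty;L^{2}(\Gamma))\cap L^{2}(0,\infty;H^{1}(\Gamma))$.

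Finally, the bound on $\pdnu u$ follows from the Robin condition \eqref{ACAC:Robin}: since $K\pdnu u=h(\phi)-u$, we obtain $\pdnu u\in L^{\infty}(0,\infty;L^{2}(\Gamma))$ directly from \eqref{Lt:1}, while differentiating in time gives $K\pd_{t}\pdnu u=h'(\phi)\pd_{t}\phi-\pd_{t}u\vert_{\Gamma}$, which belongs to $L^{2}(0,\infty;L^{2}(\Gamma))$ because $\pd_{t}\phi\in L^{2}(0,\infty;L^{2}(\Gamma))$ and the trace of $\pd_{t}u\in L^{2}(0,\infty;H^{1}(\Omega))$ lies in $L^{2}(0,\infty;L^{2}(\Gamma))$; hence $\pdnu u\in H^{1}(0,\infty;L^{2}(\Gamma))$, analogously to \eqref{pdnu:un:est}. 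The main obstacle is precisely the bulk--surface coupling through the trace $\pd_{t}u\vert_{\Gamma}$: the surface equation cannot be estimated in isolation, and the whole argument hinges on summing \eqref{Est:2a} and \eqref{Est:2b} so that the boundary term produced in the surface estimate is reabsorbed by the boundary dissipation of the bulk estimate, together with the time-uniform integrability supplied by \eqref{Lt:1}.
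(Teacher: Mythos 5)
Your proposal is correct and follows the paper's own strategy in all essentials: work at the Galerkin/Yosida level (where time differentiation is legitimate), reuse the differentiated estimates \eqref{Est:2a}--\eqref{Est:2b}, note that \eqref{ass:long:W} and \eqref{ass:long:f} make the first estimates \eqref{Apri:1a}--\eqref{Apri:1b} hold with $T=\infty$ so that all right-hand sides are integrable on $(0,\infty)$, control the initial values $\pd_{t}u_{n}(0)$, $\pd_{t}\phi_{n}(0)$ as in Remark \ref{rem:approx:ini}, pass to the limits $n \to \infty$ and $\eps \to 0$, and recover $\pdnu u \in H^{1}(0,\infty;L^{2}(\Gamma))$ from the Robin relation $K \pdnu u_{n} = h(\phi_{n}) - u_{n}$. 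The one point where you diverge is the handling of the bulk--surface coupling: you sum \eqref{Est:2a} and \eqref{Est:2b} and absorb the Young-split piece of $K^{-1}\norm{\pd_{t}u_{n}}_{L^{2}(\Gamma)}\norm{\pd_{t}\phi_{n}}_{L^{2}(\Gamma)}$ into the boundary dissipation of \eqref{Est:2a}. That is valid, but your closing claim that the surface equation ``cannot be estimated in isolation'' and that ``the whole argument hinges on summing'' overstates the difficulty: the coupling at this level is triangular, not genuinely two-way. The right-hand side of \eqref{Est:2a} sees the surface variable only through $\tfrac{1}{2}K^{-1}\norm{h'}_{L^{\infty}(\R)}^{2}\norm{\pd_{t}\phi_{n}}_{L^{2}(\Gamma)}^{2}$, which is already in $L^{1}(0,\infty)$ by the first estimate; hence the bulk bound \eqref{Apri:2a} (with $T=\infty$) closes on its own, and it then supplies exactly the control of $\pd_{t}u_{n}$ on $\Gamma$ (via the boundary dissipation in \eqref{Est:2a}, or via the trace of the $L^{2}(0,\infty;H^{1}(\Omega))$ bound) that \eqref{Est:2b} requires. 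This sequential treatment is what the paper does, with no absorption between the two estimates; your summed version is a harmless and slightly more robust variant, whereas the paper's ordering is sharper in that it exposes the one-directional structure of the coupling.
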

\begin{proof}
We return to the two-level Galerkin approximation in Section \ref{sec:exist}, whereby thanks to \eqref{ass:long:W} and \eqref{ass:long:f} the Galerkin pair of solutions $(u_{n},\phi_{n})$ satisfies the uniform estimates \eqref{Apri:1a}, \eqref{Apri:1b} with $T = \infty$.  Denoting by the symbol $C$ positive constants not depending on $n$ and $\eps$ and using that $\pd_{t}f, \pd_{t} u_{n} \in L^{2}(0,\infty;L^{2}(\Omega))$, $\pd_{t}\phi_{n} \in L^{2}(0,\infty;L^{2}(\Gamma))$, the right-hand side of  \eqref{Est:2a} is bounded in $L^{1}(0,\infty)$, which yields
\begin{align*}
\norm{\pd_{t}u_{n}}_{L^{\infty}(0,\infty;L^{2}(\Omega)) \cap L^{2}(0,\infty;H^{1}(\Omega))} \leq C.
\end{align*}
In addition, using that $\pd_{t}f_{\Gamma}, \pd_{t}\phi_{n}, \pd_{t}u_{n} \in L^{2}(0,\infty;L^{2}(\Gamma))$ and $h(\phi_{n}) - u_{n} \in L^{\infty}(0,\infty;L^{2}(\Gamma))$, the right-hand side of \eqref{Est:2b} is bounded in $L^{1}(0,\infty)$.  This gives 
\begin{align*}
\norm{\pd_{t}\phi_{n}}_{L^{\infty}(0,\infty;L^{2}(\Gamma)) \cap L^{2}(0,\infty;H^{1}(\Gamma))} + \norm{\pd_{t}h(\phi_{n})}_{L^{2}(0,\infty;L^{2}(\Gamma))} \leq C.
\end{align*}
Lastly, using the trace theorem and the relation $K \pdnu u_{n} = h(\phi_{n}) - u_{n}$ we find that
\begin{align*}
\norm{\pdnu u_{n}}_{H^{1}(0,\infty;L^{2}(\Gamma))} \leq C.
\end{align*}
Passing to the limit $n \to \infty$ and $\eps \to 0$ leads to the desired assertion.
\end{proof}

\subsection{Uniform translation estimates}
Fix $T > 0$ and consider the problem \eqref{ACAC:gen} in the time interval $(t_{k}, T+ t_{k} )$, where $\{t_{k}\}_{k \in \N}$ is the sequence in \eqref{omegalimit}, and we introduce for $t \in [0,T]$ the functions
\begin{align*}
& u_{k}(t) := u(t_{k} + t), \quad \phi_{k}(t) : = \phi(t_{k} + t), \\
&\xi_{k}(t) := \xi(t_{k} + t), \quad \xi_{\Gamma,k}(t) := \xi_{\Gamma}(t_{k} + t) \quad \text{ a.e. in } [0,T], \\
&f_{k}(t) := f(t_{k} + t), \quad f_{\Gamma,k}(t) := f_{\Gamma}(t_{k} + t),
\end{align*}
which satisfy
\begin{subequations}\label{Lt:system}
\begin{alignat}{3}
\pd_{t} u_{k} = \Lap u_{k} - \xi_{k} - \pi(u_{k}) + f_{k} & \text{ in } Q, \label{Lt:bulk} \\
\pd_{t} \phi_{k} = \LB \phi_{k} - \xi_{\Gamma,k} - \pi_{\Gamma}(\phi_{k}) + f_{\Gamma,k} - h'(\phi_{k}) \pdnu u_{k} & \text{ on } \Sigma, \label{Lt:surf} \\
K \pdnu u_{k} + u_{k} = h(\phi_{k}) & \text{ on } \Sigma, \label{Lt:Robin} \\
\xi_{k} \in \beta(u_{k}) \text{ a.e. in } Q, \quad \xi_{\Gamma,k} \in \beta_{\Gamma}(\phi_{k}) \text{ a.e.} & \text{ on } \Sigma, \\
u_{k}(0) = u(t_{k}) \text{ in } \Omega, \quad \phi_{k}(0) = \phi(t_{k}) & \text{ on } \Gamma. \label{Lt:ini}
\end{alignat}
\end{subequations}
The aim is to derive uniform (in $k$) estimates and then pass to the limit $k \to \infty$. It turns out to be more convenient to work with the continuous solutions $(u^{\eps},\phi^{\eps})$ to the system \eqref{Lt:system} with $\beta$ and $\beta_{\Gamma}$ replaced by their corresponding Yosida approximations $\beta_{\eps}$ and $\beta_{\Gamma,\eps}$.  Therefore, instead of \eqref{Lt:system} we consider deriving uniform estimates in $k$ and $\eps$ for solutions $(u^{\eps}_k(t), \phi^{\eps}_k(t)) = (u^{\eps}(t_k + t), \phi^{\eps}(t_k + t))$ to
\begin{subequations}\label{Lt:system:eps}
\begin{alignat}{3}
\pd_t u^{\eps}_k = \Lap u^{\eps}_k - \beta_{\eps}(u^{\eps}_k) - \pi(u^{\eps}_k) + f_k & \text{ in } Q, \label{Lt:bulk:eps}\\
\pd_t \phi^{\eps}_k = \LB \phi^{\eps}_k - \beta_{\Gamma,\eps}(\phi^{\eps}_k) - \pi_{\Gamma}(\phi^{\eps}_k) + f_{\Gamma,k} - h'(\phi^{\eps}_k) \pdnu u^{\eps}_k & \text{ on } \Sigma, \label{Lt:surf:eps}  \\
K \pdnu u^{\eps}_k + u^{\eps}_k = h(\phi^{\eps}_k) & \text{ on } \Sigma, \label{Lt:Robin:eps} \\
u^{\eps}_k(0) = u^{\eps}(t_k) \text{ in } \Omega, \quad \phi^{\eps}_k(0) = \phi^{\eps}(t_k) & \text{ on } \Gamma,
\end{alignat}
\end{subequations}
and passing to the limit first $\eps \to 0$ and then $k \to \infty$.  Below the symbol $C$ denotes positive constants not depending on $k$ and $\eps$.

\paragraph{First estimate.} 
Recalling that $u^{\eps}(0) = u_0$ and $\phi^{\eps}(0) = \phi_0$, then via an analogous derivation to \eqref{Lt:1}, we deduce that the same estimates also hold for $(u^{\eps}, \phi^{\eps})$ with a right-hand side $C$ depending only on $(u_0, \phi_0, f, f_{\Gamma})$, i.e.,
\begin{equation}\label{Lt:1:eps}
\begin{aligned}
& \norm{u^{\eps}}_{L^{\infty}(0,\infty;H^{1}(\Omega))} + \norm{\phi^{\eps}}_{L^{\infty}(0,\infty;H^{1}(\Gamma))} + K^{-\frac{1}{2}} \norm{u^{\eps} - h(\phi^{\eps})}_{L^{\infty}(0,\infty;L^{2}(\Gamma))} \\
& \quad + \norm{\pd_{t}u^{\eps}}_{L^{2}(0,\infty;L^{2}(\Omega))} + \norm{\pd_{t}\phi^{\eps}}_{L^{2}(0,\infty;L^{2}(\Gamma))} \leq C.
\end{aligned}
\end{equation}
This implies that the sequence of initial data $\{(u^{\eps}_k(0), \phi^{\eps}_k(0))\}_{k \in \N} = \{(u^{\eps}(t_k), \phi^{\eps}(t_k))\}_{k \in \N}$ is uniformly bounded in $H^1(\Omega) \times H^1(\Gamma)$.  Then, by a similar procedure applied to \eqref{Lt:system:eps} we obtain for $(u^{\eps}_k, \phi^{\eps}_k)$:
\begin{equation}
\begin{aligned}\label{Lt:2}
& \norm{u^{\eps}_{k}}_{L^{\infty}(0,T;H^{1}(\Omega))} + \norm{\phi^{\eps}_{k}}_{L^{\infty}(0,T;H^{1}(\Gamma))} + K^{-\frac{1}{2}} \norm{u^{\eps}_{k} - h(\phi^{\eps}_{k})}_{L^{\infty}(0,T;L^{2}(\Gamma))} \\
& \quad + \norm{\pd_{t} u^{\eps}_{k}}_{L^{2}(0,T;L^{2}(\Omega))} + \norm{\pd_{t} \phi^{\eps}_{k}}_{L^{2}(0,T;L^{2}(\Gamma))} \leq C.
\end{aligned}
\end{equation}
Applying Lebesgue's dominated convergence theorem, and using the fact that $\pd_{t}u^{\eps} \in L^{2}(0,\infty;L^{2}(\Omega))$, $\pd_{t} \phi^{\eps} \in L^{2}(0,\infty ; L^{2}(\Gamma))$ from \eqref{Lt:1:eps} shows that
\begin{align}\label{Lt:2:zero}
\pd_{t} u^{\eps}_{k} \to 0 \text{ strongly in } L^{2}(0,T;L^{2}(\Omega)), \quad \pd_{t} \phi^{\eps}_{k} \to 0 \text{ strongly in } L^{2}(0,T;L^{2}(\Gamma)).
\end{align}
Indeed, we have by definition of $u^{\eps}_{k}$:
\begin{align*}
\norm{\pd_{t} u^{\eps}_{k}}_{L^{2}(0,T;L^{2}(\Omega))}^{2} = \int_{\R} \norm{ \pd_{t} u^{\eps}}_{L^{2}(\Omega)}^{2} \chi_{[t_{k}, t_{k}+T]}(t) \dt \to 0 \text{ as } \eps \to 0, \, k \to \infty.
\end{align*}

\paragraph{Second estimate.} 
In the proof of Lemma \ref{lem:unif:time}, by passing to the limit $n \to \infty$ for $\eps$ fixed and employing weak/weak-* lower semicontinuity of the Bochner norms we can deduce that 
\begin{align*}
\norm{\pd_t u^{\eps}}_{L^{\infty}(0,\infty;L^2(\Omega)) \cap L^2(0,\infty;H^1(\Omega))} + \norm{\pd_t \phi^{\eps}}_{L^{\infty}(0,\infty;L^2(\Gamma)) \cap L^2(0,\infty;H^1(\Gamma))} \leq C.
\end{align*}
Then, the fact that $t_k > 0$ immediately implies
\begin{align*}
\norm{\pd_t u^{\eps}_k}_{L^{\infty}(0,T;L^2(\Omega))} = \norm{\pd_t u^{\eps}}_{L^{\infty}(t_k, T + t_k ; L^2(\Omega))} \leq \norm{\pd_t u^{\eps}}_{L^{\infty}(0,\infty;L^2(\Omega))} \leq C, \\
\norm{\pd_t u^{\eps}_k}_{L^2(0,T;H^1(\Omega))} = \norm{\pd_t u^{\eps}}_{L^{2}(t_k, T+t_k; H^1(\Omega))} \leq \norm{\pd_t u^{\eps}}_{L^2(0,\infty;H^1(\Omega))} \leq C,
\end{align*}
and so we obtain the uniform estimates
\begin{align}
\norm{\pd_{t} u^{\eps}_{k}}_{L^{\infty}(0,T;L^{2}(\Omega)) \cap L^{2}(0,T;H^{1}(\Omega))} & \leq C, \label{Lt:3} \\
\norm{\pd_{t}\phi^{\eps}_{k}}_{L^{\infty}(0,T;L^{2}(\Gamma)) \cap L^{2}(0,T;H^{1}(\Gamma))}&  \leq C. \label{Lt:4}
\end{align}

\paragraph{Third estimate.} 
We can write \eqref{Lt:surf:eps} as an elliptic equation for $\phi^{\eps}_{k}$:
\begin{align}\label{Lt:ellip}
- \LB \phi^{\eps}_{k} + \beta_{\Gamma,\eps}(\phi^{\eps}_{k}) = - \pi_{\Gamma}(\phi_{k}) + f_{\Gamma,k} - \pd_{t} \phi^{\eps}_{k} - h'(\phi^{\eps}_{k}) K^{-1} (h(\phi^{\eps}_{k}) - u^{\eps}_{k}) \text{ on } \Gamma,
\end{align}
where the right-hand side is bounded in $L^{\infty}(0,T;L^{2}(\Gamma))$.  Indeed, by \eqref{Lt:4} the term $\pd_{t} \phi^{\eps}_{k}$ is bounded in $L^{\infty}(0,T;L^{2}(\Gamma))$, while $f_{\Gamma,k}$ is bounded in $L^{\infty}(0,T;L^{2}(\Gamma))$ due to \eqref{ass:long:f} and Morrey's inequality $H^{1}(0,\infty) \subset C^{0,\frac{1}{2}}(0,\infty) \subset L^{\infty}(0,\infty)$.  Furthermore, by the Lipschitz continuity of $\pi_{\Gamma}$, the boundedness of $h'$ and \eqref{Lt:2}, we see that
\begin{align*}
& \norm{\pi_{\Gamma}(\phi^{\eps}_{k})}_{L^{\infty}(0,T;L^{2}(\Gamma))} \leq L_{\pi_{\Gamma}} \norm{\phi^{\eps}_{k}}_{L^{\infty}(0,T;L^{2}(\Gamma))} + T \abs{\Gamma} \abs{\pi_{\Gamma}(0)} \leq C, \\
& \norm{h'(\phi^{\eps}_{k}) (h(\phi^{\eps}_{k}) - u^{\eps}_{k})}_{L^{\infty}(0,T;L^{2}(\Gamma))} \leq \norm{h(\phi^{\eps}_{k}) - u^{\eps}_{k}}_{L^{\infty}(0,T;L^{2}(\Gamma))} \norm{h'}_{L^{\infty}(\R)} \leq C.
\end{align*}
Then, testing \eqref{Lt:ellip} with $\beta_{\Gamma,\eps}(\phi^{\eps}_{k})$ and exploiting the non-negativity of $\beta_{\Gamma,\eps}'$ after integrating by parts, we infer that
\begin{align*}
\beta_{\Gamma,\eps}(\phi^{\eps}_{k}) \in L^{\infty}(0,T;L^{2}(\Gamma)),
\end{align*}
and by virtue of elliptic regularity we obtain altogether
\begin{align}\label{Lt:5}
\norm{\phi^{\eps}_{k}}_{L^{\infty}(0,T;H^{2}(\Gamma))} + \norm{\beta_{\Gamma,\eps}(\phi^{\eps}_k)}_{L^{\infty}(0,T;L^{2}(\Gamma))} \leq C.
\end{align}
The above estimate implies that $h(\phi^{\eps}_{k}) \in L^{\infty}(0,T;H^{\frac{1}{2}}(\Gamma) \cap L^{\infty}(\Gamma))$, and testing \eqref{Lt:bulk:eps} by $\beta_{\eps}(u^{\eps}_{k})$ yields
\begin{align*}
& \int_{\Omega} \beta_{\eps}'(u^{\eps}_{k}) \abs{\nabla u^{\eps}_{k}}^{2} + \abs{\beta_{\eps}(u^{\eps}_{k})}^{2} \dx + \int_{\Gamma} K^{-1} u^{\eps}_{k} \, \beta_{\eps}(u^{\eps}_{k}) \dHaus \\
& \quad \leq \left ( \norm{\pi(u^{\eps}_{k})}_{L^{2}(\Omega)} + \norm{f_{k}}_{L^{2}(\Omega)} \right ) \norm{\beta_{\eps}(u^{\eps}_{k})}_{L^{2}(\Omega)} \\
& \qquad + \norm{h(\phi^{\eps}_{k})}_{L^{\infty}(0,T;L^{\infty}(\Gamma))} \int_{\Gamma} K^{-1} \abs{\beta_{\eps}(u^{\eps}_{k})} \dHaus.
\end{align*}
Applying Young's inequality and \eqref{ass:beta:Yosi} we obtain that $\beta_{\eps}(u^{\eps}_{k}) \in L^{\infty}(0,T;L^{2}(\Omega))$.  Then, viewing \eqref{Lt:bulk:eps}, \eqref{Lt:Robin:eps} as an elliptic equation for $u^{\eps}_{k}$, by elliptic regularity (Theorem \ref{Brezzi:reg}) we obtain altogether
\begin{align}\label{Lt:6}
\norm{u^{\eps}_{k}}_{L^{\infty}(0,T;H^{2}(\Omega))} + \norm{\beta_{\eps}(u^{\eps}_{k})}_{L^{\infty}(0,T;L^{2}(\Omega))} \leq C.
\end{align}

\subsection{Passing to the limit}
Taking into account the uniform estimates \eqref{Lt:1:eps}, \eqref{Lt:2}, \eqref{Lt:3}, \eqref{Lt:4}, \eqref{Lt:5}, \eqref{Lt:6}, first sending $\eps \to 0$ and due to the uniqueness of solutions to \eqref{Lt:system} (cf.~Theorem \ref{thm:Ctsdep}), we can infer that the limit functions $(u_k, \phi_k, \xi_k, \xi_{\Gamma,k})$, with selections $\xi_k \in \beta(u_k)$ and $\xi_{\Gamma,k} \in \beta_{\Gamma}(\phi_k)$, satisfy the same uniform (in $k$) estimates as in \eqref{Lt:1:eps}, \eqref{Lt:2}, \eqref{Lt:3}, \eqref{Lt:4}, \eqref{Lt:5}, \eqref{Lt:6} thanks to weak/weak-* lower semicontinuity of the Bochner norms.  Hence, there exist functions $(u_{\infty}, \phi_{\infty}, \xi_{\infty}, \xi_{\Gamma,\infty})$ such that
\begin{equation*}
\begin{alignedat}{5}
u_{k} & \to u_{\infty} && \text{ weakly-* in } && L^{\infty}(0,T;H^{2}(\Omega)) \cap W^{1,\infty}(0,T;L^{2}(\Omega)) \cap H^{1}(0,T;H^{1}(\Omega)), \\
u_{k} & \to u_{\infty} && \text{ strongly in } && C^{0}([0,T];W^{1,p}(\Omega)) \text{ for } p < 6, \text{ and a.e. in } Q, \\
\phi_{k} & \to \phi_{\infty} && \text{ weakly-* in } && L^{\infty}(0,T;H^{2}(\Gamma)) \cap W^{1,\infty}(0,T;L^{2}(\Gamma)) \cap H^{1}(0,T;H^{1}(\Gamma)), \\
\phi_{k} & \to \phi_{\infty} && \text{ strongly in } && C^{0}([0,T];W^{1,q}(\Gamma)) \text{ for } q < \infty, \text{ and a.e. on } \Sigma, \\
\xi_{k} & \to \xi_{\infty} && \text{ weakly in } && L^{2}(0,T;L^{2}(\Omega)), \\
\xi_{\Gamma,k} & \to \xi_{\Gamma,\infty} && \text{ weakly in } && L^{2}(0,T;L^{2}(\Gamma)),
\end{alignedat}
\end{equation*}
with $u_{\infty}, \phi_{\infty}$ independent of time due to \eqref{Lt:2:zero}.  Note that $(u_{\infty}, \phi_{\infty})$ is exactly the element in \eqref{omegalimit} as it follows from passing to the limit in \eqref{Lt:ini}.  Furthermore, the strong convergence of $u_{k}$ and $\phi_{k}$ also allow us to deduce that $\xi_{\infty} \in \beta(u_{\infty})$ a.e. in $Q$, $\xi_{\Gamma,\infty} \in \beta_{\Gamma}(\phi_{\infty})$ a.e. on $\Sigma$.  Then, passing to the limit $k \to \infty$ in \eqref{Lt:system} shows that $(u_{\infty}, \phi_{\infty}, \xi_{\infty}, \xi_{\Gamma,\infty})$ satisfy
\begin{align*}
\Lap u_{\infty} - \xi_{\infty} - \pi(u_{\infty})  = 0 & \text{ in } \Omega, \\
\LB \phi_{\infty} - \xi_{\Gamma,\infty} - \pi_{\Gamma}(\phi_{\infty}) - h'(\phi_{\infty}) \pdnu u_{\infty} = 0 & \text{ on } \Gamma, \\
K \pdnu u_{\infty} + u_{\infty} = h(\phi_{\infty}) & \text{ on } \Gamma,
\end{align*}
where by comparison of terms we also deduce that $\xi_{\infty}$ and $\xi_{\Gamma,\infty}$ are time independent.

\section{Fast reaction limit}\label{sec:limit}

\subsection{Weak solutions}
For each $K > 0$, let $(u_{0,K}, \phi_{0,K}, f_{K}, f_{\Gamma,K})$ denote a set of data satisfying the assumptions in Theorem \ref{thm:limitK}.  Then, by Theorems \ref{thm:Exist} and \ref{thm:Ctsdep} there exists a corresponding unique strong solution $(u_{K}, \phi_{K})$  to \eqref{ACAC:gen}.  Furthermore, as outlined in Remark~\ref{rmk:Unif:K:est}, the a priori estimates \eqref{Apri:1a} and \eqref{Apri:1b} for $(u_{K},\phi_{K})$ are uniform in $K$, and so there exist a subsequence (not relabelled) and limit functions $(u,\phi)$ such that
\begin{align*}
u_{K} & \to u \text{ weakly-* in } L^{\infty}(0,T;H^{1}(\Omega)) \cap H^{1}(0,T;L^{2}(\Omega)), \\
u_{K} & \to u \text{ strongly in } C^{0}([0,T];L^{q}(\Omega)), \, q < 6, \text{ and a.e. in } Q, \\
\phi_{K} & \to \phi \text{ weakly-* in } L^{\infty}(0,T;H^{1}(\Gamma)) \cap H^{1}(0,T;L^{2}(\Gamma)), \\
\phi_{K} & \to \phi \text{ strongly in } C^{0}([0,T];L^{r}(\Gamma)), \, r < \infty, \text{ and a.e. on } \Sigma, \\
u_{K} - h(\phi_{K}) & \to 0 \text{ strongly in } L^{2}(0,T;L^{2}(\Gamma)).
\end{align*}
The last convergence shows that
\begin{align*}
u \vert_{\Sigma} = h(\phi) \text{ a.e. on } \Sigma.
\end{align*}
Using \eqref{Kto0:beta} we find that
\begin{align*}
\int_{\Omega} \abs{\beta(u_{K})}^{\frac{6}{p}} \dx \leq C \left ( 1 + \int_{\Omega} \abs{u_{K}}^{6} \dx \right ), \quad \int_{\Gamma} \abs{\beta_{\Gamma}(\phi_{K})}^{s} \dHaus \leq C \left ( 1 + \int_{\Gamma} \abs{\phi_{K}}^{qs} \dHaus \right )
\end{align*}
for any $s < \infty$.  This shows that for any $m < \infty$, $r < \frac{6}{p}$ and $s < \infty$,
\begin{align*}
\beta(u_{K}) & \to \beta(u) \text{ weakly-* in } L^{\infty}(0,T;L^{\frac{6}{p}}(\Omega)) \text{ and strongly in } L^{m}(0,T;L^{r}(\Omega)), \\
\beta_{\Gamma}(\phi_{K}) & \to \beta_{\Gamma}(\phi) \text{ weakly-* in } L^{\infty}(0,T;L^{s}(\Gamma)) \text{ and strongly in } L^{m}(0,T;L^{s}(\Gamma)).
\end{align*}
The assertions of strong convergence come from a.e.~convergence (as $\beta$ and $\beta_{\Gamma}$ are single-valued and continuous) and the application of Egorov's theorem.
Testing \eqref{ACAC:bulk} with an arbitrary test function $\zeta$ gives
\begin{align}\label{bulk:weak}
\int_{\Omega} \pd_{t} u_{K} \, \zeta + \nabla u_{K} \cdot \nabla \zeta + \beta(u_{K}) \, \zeta + \pi(u_{K}) \, \zeta - f_{K} \, \zeta \dx - \int_{\Gamma} \pdnu u_{K} \, \zeta \dHaus = 0.
\end{align}
Meanwhile, testing \eqref{ACAC:surf} with the test function $\frac{1}{h'(\phi_{K})} \zeta_{\Gamma} = \frac{1}{\alpha} \zeta_{\Gamma}$ leads to
\begin{align}\label{surf:weak}
\int_{\Gamma} \frac{1}{\alpha} \left (\pd_{t}\phi_{K} \, \zeta_{\Gamma} + \surf \phi_{K} \cdot \surf \zeta_{\Gamma} + \beta_{\Gamma}(\phi_{K}) \, \zeta_{\Gamma} + \pi_{\Gamma}(\phi_{K}) \, \zeta_{\Gamma} - f_{\Gamma,K} \, \zeta_{\Gamma} \right ) + \pdnu u_{K} \, \zeta_{\Gamma} \dHaus = 0.
\end{align}
We now consider an arbitrary test function $\zeta \in L^{2}(0,T;H^{1}(\Omega))$ such that $\zeta_{\Gamma} := \zeta \vert_{\Gamma} \in L^{2}(0,T;H^{1}(\Gamma))$.  Then, upon adding the equations \eqref{bulk:weak} and \eqref{surf:weak}, so that the terms involving $\pdnu u_{K}$ cancel, leads to
\begin{equation}\label{Kto0:combined:weak}
\begin{aligned}
0 & = \int_{0}^{T} \int_{\Omega} \pd_{t} u_{K} \, \zeta + \nabla u_{K} \cdot \nabla \zeta + \beta(u_{K}) \, \zeta + \pi(u_{K}) \, \zeta - f_{K} \, \zeta \dx \dt \\
& \quad + \int_{0}^{T} \int_{\Gamma} \frac{1}{\alpha} \left (\pd_{t}\phi_{K} \, \zeta_{\Gamma} + \surf \phi_{K} \cdot \surf \zeta_{\Gamma} + \beta_{\Gamma}(\phi_{K}) \, \zeta_{\Gamma} + \pi_{\Gamma}(\phi_{K}) \, \zeta_{\Gamma} - f_{\Gamma,K} \, \zeta_{\Gamma} \right ) \dHaus \dt.
\end{aligned}
\end{equation}
Passing to the limit $K \to 0$ shows that the limit functions $(u, \phi)$ satisfy \eqref{Kto0:weakform}.  Let us mention that the restriction $p \leq 5$ on the growth of $\beta$ is due to the fact that the product $\beta(u) \, \zeta$ is integrable for $\zeta \in L^{6}(\Omega)$ if and only if $\beta(u) \in L^{\frac{6}{5}}(\Omega)$.

\bigskip

The proof of Theorem \ref{thm:limitK} for the case of maximal monotone graphs is similar.  Thanks to assumption \eqref{Kto0:maxmono} the selections $\xi_{K} \in \beta(u_{K})$ and $\xi_{\Gamma,K} \in \beta_{\Gamma}(\phi_{K})$ are bounded in $L^{\infty}(0,T;L^{\frac{6}{p}}(\Omega))$ and in $L^{\infty}(0,T;L^{s}(\Gamma))$, respectively, for any $s < \infty$.  Then, there exists a subsequence (not relabelled) such that 
\begin{align*}
\xi_{K} & \to \xi \text{ weakly-* in } L^{\infty}(0,T;L^{\frac{6}{5}}(\Omega)), \\
\xi_{\Gamma,K} & \to \xi_{\Gamma} \text{ weakly-* in } L^{\infty}(0,T;L^{s}(\Gamma)).
\end{align*}
In order to show that $\xi \in \beta(u)$ a.e.~in $Q$ and $\xi_{\Gamma} \in \beta_{\Gamma}(\phi)$ a.e.~in $\Sigma$, it suffices to have $u_{K} \to u$ strongly in $L^{1}(0,T;L^{\frac{6}{6-p}}(\Omega))$ and $\phi_{K} \to \phi$ strongly in $L^{1}(0,T;L^{\frac{s}{s-1}}(\Gamma))$.  The strong convergence of $\phi_{K}$ is valid for any $s < \infty$, and for the strong convergence of $u_{K}$, we require $\frac{6}{6-p} < 6$ which is equivalent to $p < 5$.

\paragraph{Continuous dependence.} Let $\{(u_{i}, \phi_{i})\}_{i=1,2}$ denote two solutions to \eqref{Limit:form:h} corresponding to the data $\{(u_{0,i}, \phi_{0,i}, f_{i}, f_{\Gamma,i})\}_{i=1,2}$ and denote the difference by $\hat{u}$, $\hat{\phi}$, $\hat{u}_{0}$, $\hat{\phi}_{0}$, $\hat{f}$ and $\hat{f}_{\Gamma}$, respectively.  Then, substituting $\zeta = \hat{u}$ in the difference of \eqref{Kto0:weakform} and noting that
\begin{align*}
\zeta_{\Gamma} = \hat{u} \vert_{\Sigma} = h(\phi_{1}) - h(\phi_{2}) = \alpha \hat{\phi},
\end{align*}
we obtain
\begin{align*}
& \frac{1}{2} \frac{\dd}{\dt} \left ( \norm{\hat{u}}_{L^{2}(\Omega)}^{2} + \norm{\hat{\phi}}_{L^{2}(\Gamma)}^{2} \right ) + \norm{\nabla \hat{u}}_{L^{2}(\Omega)}^{2} + \norm{\surf \hat{\phi}}_{L^{2}(\Gamma)}^{2} \\
& \quad \leq \norm{\hat{f}}_{L^{2}(\Omega)} \norm{\hat{u}}_{L^{2}(\Omega)} + L_{\pi} \norm{\hat{u}}_{L^{2}(\Omega)}^{2} + \norm{\hat{f}_{\Gamma}}_{L^{2}(\Gamma)} \norm{\hat{\phi}}_{L^{2}(\Gamma)} + L_{\pi_{\Gamma}} \norm{\hat{\phi}}_{L^{2}(\Gamma)}^{2},
\end{align*}
where we have used the monotonicity of $\beta$ and $\beta_{\Gamma}$.  Then, by Gronwall's inequality we obtain the desired result.

\subsection{Strong solutions}

\subsubsection{Approximation scheme}
Following the approximation procedure of Colli and Fukao \cite{ColliFukaoAC}, for $\eps \in (0,1)$ we consider the approximation problem
\begin{equation}\label{Abst:Approx}
\begin{aligned}
\AA^{2} \bm{u}_{\eps}'(t) + \pd \varphi_{\eps}(\bm{u}_{\eps}(t)) + \AA \left (\bm{\pi}(\bm{u}_{\eps}(t))  - \bm{f}(t) \right ) \ni \bm{0} & \text{ in } \bm{H} \text{ for a.e. } t \in (0,T), \\
\bm{u}_{\eps}(0) = \bm{u}_{0} & \text{ in } \bm{H},
\end{aligned}
\end{equation}
where the function $\varphi_{\eps} : \bm{H} \to [0,\infty]$ is defined as
\begin{align}
\label{CF:eps:varphi}
\varphi_{\eps}(\bm{z}) = \begin{cases}
\displaystyle \int_{\Omega} \frac{1}{2} \abs{\nabla z}^{2} + \frac{\eps}{2} \abs{z}^{2} + \hat{\beta}_{\eps}(z) \dx  \\
\quad \displaystyle +  \int_{\Gamma} \frac{1}{2 \alpha^{2}} \abs{\surf z_{\Gamma}}^{2} + \frac{\eps}{2 \alpha^2} \abs{z_{\Gamma}}^{2} + \hat{\beta}_{\Gamma,\eps}(g(z_{\Gamma})) \dHaus \quad \text{ if } \bm{z} = (z, z_{\Gamma}) \in \bm{V}, \\
+ \infty \quad \text{ otherwise}.
\end{cases}
\end{align}
In the above, we recall the constant matrix $\AA$ is defined in \eqref{mat}, the product Hilbert spaces $\bm{H}$ and $\bm{V}$ are defined in \eqref{Prod:Space} equipped with inner products defined in \eqref{Prod:Space:innerProd}, $\beta_{\eps}$ and $\beta_{\Gamma,\eps}$ are the Yosida approximations of $\beta$ and $\beta_{\Gamma}$ with antiderivatives $\hat{\beta}_{\eps}$ and $\hat{\beta}_{\Gamma,\eps}$, respectively, and $g(s) = \alpha^{-1}(s - \eta)$ for $\alpha \neq 0$, $\eta \in \R$.

Note that the composition of a convex function with an affine linear function is convex, and thus, thanks to \cite[Lemma 3.1]{ColliFukaoAC}, the function $\varphi_{\eps} : \bm{H} \to [0,\infty]$ is convex, lower semicontinuous with domain $D(\varphi_{\eps}) = \bm{V}$.   Furthermore, $\varphi_{\eps}$ is lower semicontinuous in $\bm{V}$ and the subdifferential $\pd_{*} \varphi_{\eps}$ as an operator mapping from $\bm{V}$ to its dual space $\bm{V}'$ is single-valued and is characterized by the following:
\begin{equation}\label{pd:varphi:eps}
\begin{aligned}
\inner{\pd_{*} \varphi_{\eps}(\bm{z})}{\bm{y}}_{\bm{V}', \bm{V}} & = \int_{\Omega} \nabla z \cdot \nabla y + \eps z \, y + \beta_{\eps}(z) \, y \dx \\
&\quad  + \int_{\Gamma} \alpha^{-2} \surf z_{\Gamma} \cdot \surf y_{\Gamma} + \alpha^{-2} \eps z_{\Gamma} \, y_{\Gamma} + \alpha^{-1} \beta_{\Gamma,\eps}(g(z_{\Gamma}))  \, y_{\Gamma} \dHaus
\end{aligned}
\end{equation}
for all $\bm{z} = (z, z_{\Gamma}), \bm{y} = (y, y_{\Gamma}) \in \bm{V}$.  Note that the extra factor $\alpha^{-1}$ appearing before $\beta_{\Gamma,\eps}(g(z_{\Gamma}))$ in \eqref{pd:varphi:eps} arises from the derivative of $g$.

\begin{lemma}\label{lem:eps:wellpose}
For each $\eps \in (0,1]$, there exists a unique
\begin{align*}
\bm{u}_{\eps} := (u_{\eps}, u_{\Gamma,\eps}) \in H^{1}(0,T;\bm{H}) \cap L^{\infty}(0,T;\bm{V})
\end{align*}
satisfying \eqref{Abst:Approx}.
\end{lemma}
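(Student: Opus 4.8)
The plan is to recast \eqref{Abst:Approx} as a subdifferential (gradient-flow) evolution equation perturbed by a Lipschitz term in a suitably weighted Hilbert space, and then invoke the Br\'ezis--Komura theory. Since $\AA^{2} = \mathrm{diag}(1,\alpha^{-2})$ is symmetric and positive definite, the bilinear form $(\bm{p},\bm{q})_{*} := (\AA^{2}\bm{p}, \bm{q})_{\bm{H}}$ defines an inner product on $\bm{H}$ whose norm is equivalent to $\norm{\cdot}_{\bm{H}}$; denote the resulting Hilbert space by $\bm{H}_{*}$. For $\varphi_{\eps}$ convex, one checks directly from the definition of the subdifferential that the subgradient computed in $\bm{H}_{*}$ equals $\AA^{-2}\pd\varphi_{\eps}$, where $\pd\varphi_{\eps}$ is the subgradient in $\bm{H}$. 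Hence, applying $\AA^{-2}$ to \eqref{Abst:Approx} rewrites it equivalently as
\begin{equation*}
\bm{u}_{\eps}'(t) + \pd_{*}\varphi_{\eps}(\bm{u}_{\eps}(t)) \ni \AA^{-1}\bigl(\bm{f}(t) - \bm{\pi}(\bm{u}_{\eps}(t))\bigr) \quad \text{in } \bm{H}_{*}, \qquad \bm{u}_{\eps}(0) = \bm{u}_{0},
\end{equation*}
where $\pd_{*}\varphi_{\eps}$ is maximal monotone in $\bm{H}_{*}$ as the subdifferential of the proper, convex, lower semicontinuous functional $\varphi_{\eps}$ with $D(\varphi_{\eps}) = \bm{V}$.

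Next, I would verify the hypotheses needed to solve this Cauchy problem. The map $\bm{v} \mapsto \AA^{-1}\bm{\pi}(\bm{v})$ is Lipschitz continuous from $\bm{H}_{*}$ to $\bm{H}_{*}$: indeed $\bm{\pi}(\bm{v}) = (\pi(v), \pi_{\Gamma}(g(v_{\Gamma})))$ is Lipschitz on $\bm{H}$ since $\pi$, $\pi_{\Gamma}$ are Lipschitz and $g$ is affine linear, while $\AA^{-1}$ is bounded and the two norms are equivalent. The forcing satisfies $\AA^{-1}\bm{f} \in H^{1}(0,T;\bm{H}) \subset L^{2}(0,T;\bm{H}_{*})$ by \eqref{ass:f}. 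Moreover $\bm{u}_{0} = (u_{0}, u_{0}\vert_{\Gamma}) \in \bm{V}$, because $u_{0} \in H^{2}(\Omega)$ by \eqref{ass:ini:strong} gives a trace $u_{0}\vert_{\Gamma} \in H^{1}(\Gamma)$, and $\varphi_{\eps}(\bm{u}_{0}) < \infty$ since $\hat{\beta}_{\eps}(u_{0}) \le \hat{\beta}(u_{0}) \in L^{1}(\Omega)$ and $\hat{\beta}_{\Gamma,\eps}(g(u_{0}\vert_{\Gamma})) \le \hat{\beta}_{\Gamma}(g(u_{0}\vert_{\Gamma})) \in L^{1}(\Gamma)$ by \eqref{ass:ini:strong} and \eqref{Yosida}; thus $\bm{u}_{0} \in D(\varphi_{\eps})$. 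The abstract theory for gradient flows perturbed by a Lipschitz operator (see \cite{Barbu,Brezis}) then yields a unique $\bm{u}_{\eps} \in H^{1}(0,T;\bm{H})$ solving the reformulated problem, equivalently \eqref{Abst:Approx}; uniqueness follows from the monotonicity of $\pd_{*}\varphi_{\eps}$, the Lipschitz bound on $\AA^{-1}\bm{\pi}$, and Gronwall's inequality.

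Finally, to obtain the regularity $\bm{u}_{\eps} \in L^{\infty}(0,T;\bm{V})$, I would test the equation with $\bm{u}_{\eps}'$ in $\bm{H}_{*}$ and use the chain rule $\frac{\dd}{\dt}\varphi_{\eps}(\bm{u}_{\eps}) = (\pd_{*}\varphi_{\eps}(\bm{u}_{\eps}), \bm{u}_{\eps}')_{*}$, valid here because $\bm{u}_{0} \in D(\varphi_{\eps})$ and $\bm{u}_{\eps} \in H^{1}(0,T;\bm{H})$. Absorbing $\tfrac{1}{2}\norm{\bm{u}_{\eps}'}_{\bm{H}_{*}}^{2}$ via Young's inequality on the right-hand side and controlling $\norm{\bm{u}_{\eps}}_{\bm{H}}$ by a preliminary $L^{2}$ estimate, a Gronwall argument shows that $t \mapsto \varphi_{\eps}(\bm{u}_{\eps}(t))$ is bounded on $[0,T]$. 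Since $\hat{\beta}_{\eps}, \hat{\beta}_{\Gamma,\eps} \ge 0$, the functional $\varphi_{\eps}$ is coercive on $\bm{V}$, controlling $\norm{z}_{H^{1}(\Omega)}^{2} + \norm{z_{\Gamma}}_{H^{1}(\Gamma)}^{2}$ through its gradient and $\eps$-regularized quadratic terms, so the uniform bound on $\varphi_{\eps}(\bm{u}_{\eps})$ delivers $\bm{u}_{\eps} \in L^{\infty}(0,T;\bm{V})$. The main obstacle is the nonlinear coupling $\bm{\pi}(\bm{u}_{\eps})$ on the right-hand side, which prevents a direct appeal to the plain subgradient theory; this is precisely what the Lipschitz-perturbation framework (or, alternatively, a Banach fixed-point iteration on a short interval followed by continuation via the above a priori estimates) is designed to handle.
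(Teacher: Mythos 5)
Your proposal is correct, but it takes a genuinely different route from the paper's own proof. The paper keeps $\AA^{2}$ as a separate linear, bounded, self-adjoint, strictly monotone operator and treats \eqref{Abst:Approx} as a doubly nonlinear equation: for a frozen $\bm{w} \in C^{0}([0,T];\bm{H})$ it solves $\AA^{2}\bm{u}' + \pd\varphi_{\eps}(\bm{u}) \ni \AA(\bm{f} - \bm{\pi}(\bm{w}))$ by invoking the Colli--Visintin theory \cite{ColliVisintin} (whose hypotheses are met thanks to the coercivity $\varphi_{\eps}(\bm{z}) \geq C(\alpha,\eps)\norm{\bm{z}}_{\bm{V}}^{2}$ supplied by the $\eps$-regularization terms), and then removes the freezing by an explicit Banach contraction on $C^{0}([0,s_{*}];\bm{H})$ followed by continuation to $[0,T]$. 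You instead absorb $\AA^{2}$ into the scalar product: since $\AA^{2} = \mathrm{diag}(1,\alpha^{-2})$ is constant, symmetric and positive definite, the weighted inner product $(\AA^{2}\,\cdot\,,\cdot)_{\bm{H}}$ is equivalent, the subdifferential transforms as $\pd_{*}\varphi_{\eps} = \AA^{-2}\pd\varphi_{\eps}$ (your verification of this is correct), and \eqref{Abst:Approx} becomes the textbook problem $\bm{u}' + \pd_{*}\varphi_{\eps}(\bm{u}) \ni \AA^{-1}(\bm{f} - \bm{\pi}(\bm{u}))$, a classical gradient flow with Lipschitz perturbation covered by \cite{Barbu,Brezis}; your fallback (short-interval fixed point plus continuation) is in fact exactly the paper's Step 2. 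What each approach buys: yours is more elementary and self-contained --- it needs no doubly-nonlinear machinery, and the $L^{\infty}(0,T;\bm{V})$ bound emerges transparently from testing with $\bm{u}_{\eps}'$ and the subdifferential chain rule, legitimate because $\bm{u}_{0} \in D(\varphi_{\eps}) = \bm{V}$ (your check via \eqref{ass:ini:strong} is right, and also follows simply from the quadratic growth $\hat{\beta}_{\eps}(s) \leq \frac{1}{2\eps}\abs{s}^{2}$ of the Yosida approximation). The paper's route does not depend on $\AA^{2}$ being symmetric positive definite in any essential way --- the Colli--Visintin framework tolerates far more general operators acting on $\bm{u}'$ --- and its cited theorem delivers the regularity $H^{1}(0,T;\bm{H}) \cap L^{\infty}(0,T;\bm{V})$ as part of its conclusion, with no separate energy estimate needed. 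Both arguments ultimately rest on the same structural facts: monotonicity of $\pd\varphi_{\eps}$ (in particular of $\beta_{\Gamma,\eps} \circ g$ paired against increments of $g$, where the affine linearity of $g$ enters) and the Lipschitz character of $\bm{\pi}$.
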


\begin{proof}
We sketch the basic steps:
\paragraph{Step 1.} For a given $\bm{w} \in C^{0}([0,T];\bm{H})$, the equation
\begin{align*}
\AA^{2} \bm{u}'(t) +  \pd \varphi_{\eps}(\bm{u}(t)) \ni \AA ( \bm{f}(t) - \bm{\pi}(\bm{w}(t))) & \text{ in } \bm{H}, \\
\bm{u}(0) = \bm{u}_{0} & \text{ in } \bm{H}
\end{align*}
admits a unique solution $\bm{u} \in L^{\infty}(0,T;\bm{V}) \cap H^{1}(0,T;\bm{H})$ for any $\bm{u}_{0} \in \bm{V}$.  Indeed, the right-hand side belongs to $L^{2}(0,T;\bm{H})$, and setting $A := \AA^{2}$, $B := \pd \varphi_{\eps}$, we can applying \cite[Theorem 2.1]{ColliVisintin} to deduce the existence result.  This is thanks to the fact that $A$ is a constant operator, and $B$ is the subdifferential of a proper, convex and lower semicontinuous function mapping $\bm{H}$ to $[0,\infty]$.  Furthermore, by \eqref{CF:eps:varphi}, it holds that $\varphi_{\eps}(\bm{z}) \geq C(\alpha, \eps) \norm{\bm{z}}_{\bm{V}}^{2}$, and so the assumptions of \cite[p.~741]{ColliVisintin} are fulfilled.  Concerning uniqueness,  we regard $A = \AA^{2}$ as a linear, self-adjoint operator in $\bm{H}$ that is strictly monotone.  Then, uniqueness of solutions is given by \cite[Remark 2.5]{ColliVisintin}, and from this we can construct a map $\Psi : \bm{w} \mapsto \bm{u}$ from $C^{0}([0,T];\bm{H})$ into itself.

\paragraph{Step 2.} Given a pair $\bm{w}_{1}, \bm{w}_{2} \in C^{0}([0,T];\bm{H})$ with corresponding solutions $\bm{u}_{1}, \bm{u}_{2}$, we find that by testing the difference of the equations with $\bm{u}_{1} - \bm{u}_{2}$, using the monotonicity of $\pd \varphi_{\eps}$ (see \eqref{pd:varphi:eps}), and integrating in time leads to
\begin{align*}
\norm{\bm{u}_{1}(s) - \bm{u}_{2}(s)}_{\bm{H}}^{2} \leq C_{\alpha, \bm{\pi}} \int_{0}^{s} \norm{\bm{w}_{1}(t) - \bm{w}_{2}(t)}_{\bm{H}}^{2} \dt \quad \forall s \in [0,T],
\end{align*}
for some positive constant $C_{\alpha, \bm{\pi}}$ depending only on $\alpha$ and the Lipschitz constants of $\pi$ and $\pi_{\Gamma}$, since both $\bm{u}_{1}$ and $\bm{u}_{2}$ have the same initial data and forcing.  Let us mention that by the monotonicity of $\beta_{\Gamma,\eps}$ and the affine linear relation $g$, we have
\begin{align*}
& \int_{\Gamma} \left ( \beta_{\Gamma,\eps}(g(u_{\Gamma,1})) - \beta_{\Gamma,\eps}(g(u_{\Gamma,2})) \right ) \left ( \alpha^{-1} (u_{\Gamma,1} - u_{\Gamma,2}) \right ) \dHaus \\
& \quad = \int_{\Gamma} \left ( \beta_{\Gamma,\eps}(g(u_{\Gamma,1})) - \beta_{\Gamma,\eps}(g(u_{\Gamma,2})) \right ) \left ( g(u_{\Gamma,1}) - g(u_{\Gamma,2}) \right ) \dHaus \geq 0.
\end{align*}
Choosing $s_{*} \in (0,T]$ such that $C_{\alpha, \bm{\pi}} s_{*} < 1$, the above estimate shows that $\Psi: C^{0}([0,s_{*}];\bm{H})$ into itself is a contraction, and by the  contraction mapping principle, $\Psi$ has a unique fixed point and thus there exists a unique solution $\bm{u}_{\eps}$ on the interval $[0,s_{*}]$ to \eqref{Abst:Approx}.  

Thanks to the fact that $C_{\alpha, \bm{\pi}}$ is independent of the initial values, we solve \eqref{Abst:Approx} on the interval $[s_{*}, 2 s_{*}]$ by setting $\bm{u}(s_{*})$ as the new initial value.  The above arguments yield that $\Psi: C^{0}([s_{*}, 2 s_{*}]; \bm{H})$ into itself is a contraction and this allows us to extend the unique solution $\bm{u}_{\eps}$ to the interval $[s_{*}, 2 s_{*}]$.  Iterating the process a finite number of times leads to the desired assertion on $[0,T]$.

\end{proof}

Thanks to Lemma \ref{lem:eps:wellpose} we see that $\bm{u}_{\eps} = (u_{\eps}, u_{\Gamma,\eps})$ satisfies the following variational formulation:
\begin{equation}\label{eps:weakform}
\begin{aligned}
0 & = \int_{\Omega} \nabla u_{\eps} \cdot \nabla \zeta \dx +  \int_{\Gamma} \alpha^{-2} \surf u_{\Gamma,\eps} \cdot \surf \zeta_{\Gamma} \dHaus \\
& \qquad + \int_{\Omega} \left ( \pd_{t} u_{\eps} + \eps u_{\eps} + \beta_{\eps}(u_{\eps}) + \pi(u_{\eps}) - f \right ) \, \zeta \dx \\
& \qquad + \alpha^{-1} \int_{\Gamma}  \left ( \alpha^{-1} \pd_{t} u_{\Gamma,\eps} + \alpha^{-1} \eps u_{\Gamma,\eps} + \beta_{\Gamma,\eps}(g(u_{\eps})) + \pi_{\Gamma}(g(u_{\eps})) - f_{\Gamma} \right ) \, \zeta_{\Gamma} \dHaus 
\end{aligned}
\end{equation}
for all $\bm{\zeta} = (\zeta, \zeta_{\Gamma}) \in \bm{V}$.

\bigskip

We now derive regularity properties for the solution.
\begin{lemma}\label{lem:eps:reg}
For each $\eps \in (0,1]$, we have 
\begin{align*}
u_{\eps} \in L^{2}(0,T;H^{2}(\Omega)), \quad u_{\Gamma,\eps} \in L^{2}(0,T;H^{2}(\Gamma)).
\end{align*}
\end{lemma}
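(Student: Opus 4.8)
The plan is to read off the strong (elliptic) form hidden in the variational identity \eqref{eps:weakform} for a.e.\ fixed $t$, and then apply elliptic regularity. First I would record the regularity already furnished by Lemma \ref{lem:eps:wellpose}: from $\bm{u}_{\eps} \in H^{1}(0,T;\bm{H}) \cap L^{\infty}(0,T;\bm{V})$ we have $\pd_{t} u_{\eps} \in L^{2}(0,T;L^{2}(\Omega))$, $\pd_{t} u_{\Gamma,\eps} \in L^{2}(0,T;L^{2}(\Gamma))$, together with $u_{\eps} \in L^{\infty}(0,T;H^{1}(\Omega))$ and $u_{\Gamma,\eps} = u_{\eps}\vert_{\Gamma} \in L^{\infty}(0,T;H^{1}(\Gamma))$, the trace identity being built into the definition of $\bm{V}$.

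Next I would verify that the data of the elliptic problem lie in $L^{2}$. Setting
\begin{align*}
F &:= f - \pd_{t} u_{\eps} - \eps u_{\eps} - \beta_{\eps}(u_{\eps}) - \pi(u_{\eps}), \\
G &:= \alpha^{-1} f_{\Gamma} - \alpha^{-2} \pd_{t} u_{\Gamma,\eps} - \alpha^{-2} \eps u_{\Gamma,\eps} - \alpha^{-1} \beta_{\Gamma,\eps}(g(u_{\eps})) - \alpha^{-1} \pi_{\Gamma}(g(u_{\eps})),
\end{align*}
I note that $\beta_{\eps}$ and $\beta_{\Gamma,\eps}$ are globally Lipschitz with $\beta_{\eps}(0) = \beta_{\Gamma,\eps}(0) = 0$, that $\pi, \pi_{\Gamma}$ are Lipschitz by \eqref{ass:pi} and $g$ is affine; together with the embeddings $H^{1}(\Omega) \subset L^{6}(\Omega)$ and $H^{1}(\Gamma) \subset L^{p}(\Gamma)$ for all $p < \infty$ (since $\Gamma$ is a surface in $\R^{3}$), this gives $\beta_{\eps}(u_{\eps}), \pi(u_{\eps}) \in L^{\infty}(0,T;L^{2}(\Omega))$ and $\beta_{\Gamma,\eps}(g(u_{\eps})), \pi_{\Gamma}(g(u_{\eps})) \in L^{\infty}(0,T;L^{2}(\Gamma))$. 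With $f \in L^{2}(0,T;L^{2}(\Omega))$ and $f_{\Gamma} \in L^{2}(0,T;L^{2}(\Gamma))$ from \eqref{ass:f}, I conclude $F \in L^{2}(0,T;L^{2}(\Omega))$ and $G \in L^{2}(0,T;L^{2}(\Gamma))$.

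Then I would interpret \eqref{eps:weakform} as, for a.e.\ $t$, the weak formulation of the coupled system
\begin{align*}
-\Lap u_{\eps} = F \ \text{ in } \Omega, \quad -\alpha^{-2} \LB u_{\Gamma,\eps} + \pdnu u_{\eps} = G \ \text{ on } \Gamma, \quad u_{\eps}\vert_{\Gamma} = u_{\Gamma,\eps} \ \text{ on } \Gamma,
\end{align*}
that is, a Poisson equation in the bulk coupled through the trace and the normal flux $\pdnu u_{\eps}$ to a Wentzell-type condition on $\Gamma$. Invoking the elliptic regularity for this operator --- precisely the characterization of $\pd \varphi$ recorded after \eqref{CF:varphi}, established within the framework of Colli and Fukao \cite{ColliFukaoAC} --- yields $u_{\eps}(t) \in H^{2}(\Omega)$ and $u_{\Gamma,\eps}(t) \in H^{2}(\Gamma)$ for a.e.\ $t$, with $H^{2}$-norms bounded by $\norm{F(t)}_{L^{2}(\Omega)} + \norm{G(t)}_{L^{2}(\Gamma)}$ plus lower-order terms. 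Squaring and integrating over $(0,T)$ then gives the asserted $u_{\eps} \in L^{2}(0,T;H^{2}(\Omega))$ and $u_{\Gamma,\eps} \in L^{2}(0,T;H^{2}(\Gamma))$.

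The hard part is the coupled elliptic regularity itself, and in particular the surface component: the boundary equation $-\alpha^{-2} \LB u_{\Gamma,\eps} = G - \pdnu u_{\eps}$ forces one to control $\pdnu u_{\eps}$ in $L^{2}(\Gamma)$, which in turn requires the bulk $H^{2}$-estimate, whereas that estimate needs the Dirichlet datum $u_{\Gamma,\eps}$ to be in $H^{3/2}(\Gamma)$ --- so the two regularities are genuinely entangled rather than decoupled. I would break this circularity either by appealing directly to the regularity theory for the Wentzell operator $\pd \varphi$, or, for a self-contained argument, by a tangential difference-quotient estimate on $\Gamma$ (using a partition of unity subordinate to charts and the smoothness of $\Gamma$ from \eqref{ass:domain}) to first upgrade $u_{\Gamma,\eps}$, thereby improving the Dirichlet data, and then closing the bootstrap with the standard interior-and-boundary bulk elliptic estimate to recover $\pdnu u_{\eps} \in L^{2}(\Gamma)$.
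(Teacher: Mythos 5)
Your first half (testing \eqref{eps:weakform} with $\zeta \in C^{\infty}_{0}(\Omega)$ to read off the strong bulk equation, and checking that $\pd_{t}u_{\eps}$, $\beta_{\eps}(u_{\eps})$, $\pi(u_{\eps})$, etc.\ all lie in $L^{2}$) coincides with the paper's proof. The gap is in the step you yourself flag as the hard part. Your primary route invokes ``the characterization of $\pd\varphi$ recorded after \eqref{CF:varphi}'' as an elliptic regularity theorem for the Wentzell operator, but the paper introduces that characterization only \emph{formally}; Lemma \ref{lem:eps:reg} is exactly the statement needed to justify it (for $\varphi_{\eps}$), so inside this paper the appeal is circular. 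The functional here also differs from the one in Colli--Fukao (the affine $g$ and the weights $\alpha^{-2}$), so the citation cannot be used verbatim either. Your fallback via tangential difference quotients is a legitimate self-contained alternative, but it is only sketched, and it is considerably heavier than what is actually required.

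More importantly, the ``genuine entanglement'' you describe --- that $\pdnu u_{\eps} \in L^{2}(\Gamma)$ needs bulk $H^{2}$, which needs Dirichlet data in $H^{3/2}(\Gamma)$ --- is illusory, and the paper's proof is a strictly sequential bootstrap that avoids it. First, since $u_{\Gamma,\eps} \in L^{\infty}(0,T;H^{1}(\Gamma))$ and $\Lap u_{\eps} \in L^{2}(0,T;L^{2}(\Omega))$, elliptic regularity for the Dirichlet problem (Theorem \ref{Brezzi:reg} with $r=0$, $t=1$, $p=2$, so $s=\min(2,1+\tfrac12)=\tfrac32$) gives $u_{\eps} \in L^{2}(0,T;H^{\frac{3}{2}}(\Omega))$. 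Second --- this is the point you missed --- the normal derivative does \emph{not} require bulk $H^{2}$: for functions in $H^{\frac{3}{2}}(\Omega)$ whose Laplacian lies in $L^{2}(\Omega)$, the graph-norm trace theorem (Theorem \ref{Brezzi:trace} with $s=\tfrac32$, $k=1$) already yields $\pdnu u_{\eps} \in L^{2}(0,T;L^{2}(\Gamma))$. Third, with $\pdnu u_{\eps}$ in hand, \eqref{eps:weakform} identifies $\LB u_{\Gamma,\eps}$ with an $L^{2}(0,T;L^{2}(\Gamma))$ right-hand side, and elliptic regularity on the closed manifold $\Gamma$ gives $u_{\Gamma,\eps} \in L^{2}(0,T;H^{2}(\Gamma))$. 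Finally, the improved Dirichlet datum ($H^{\frac{3}{2}}(\Gamma)$ suffices) fed back into the bulk problem gives $u_{\eps} \in L^{2}(0,T;H^{2}(\Omega))$. So no external Wentzell regularity theory and no difference-quotient argument is needed; the resolution is the intermediate $H^{\frac{3}{2}}$ step combined with the trace theorem on the graph space $W^{s,p}_{A,1}$.
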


\begin{proof}
Take a test function $\zeta \in C^{\infty}_{0}(\Omega)$ (so that $\zeta_{\Gamma} = 0$) in \eqref{eps:weakform} we see that $u_{\eps}$ satisfies
\begin{align*}
\Lap u_{\eps}(t) = \pd_{t} u_{\eps}(t) + \eps u_{\eps}(t) + \beta_{\eps}(u_{\eps}(t)) + \pi(u_{\eps}(t)) - f(t)
\end{align*}
in the sense of distributions for a.e.~$t \in (0,T)$.  By \eqref{ass:f}, Lipschitz continuity of $\beta_{\eps}$ and $\pi$, and the regularity of $u_{\eps}$ stated in Lemma \ref{lem:eps:wellpose}, the right-hand side belongs to $L^{2}(0,T;L^{2}(\Omega))$, and so $\Lap u_{\eps} \in L^{2}(0,T;L^{2}(\Omega))$.  Furthermore, the trace $u_{\Gamma,\eps}$ of $u_{\eps}$ on $\Sigma$ belongs to $L^{\infty}(0,T;H^{1}(\Gamma))$, and so by elliptic regularity \cite[Theorem 3.2, p.~1.79]{Brezzi} (see also Theorem \ref{Brezzi:reg}) we obtain
\begin{align*}
u_{\eps} \in L^{2}(0,T;H^{\frac{3}{2}}(\Omega)).
\end{align*}
Then, using the above estimate in conjunction with the fact that $\Lap u_{\eps} \in L^{2}(0,T;L^{2}(\Omega))$, we have by a variant of the trace theorem \cite[Theorem 2.27, p.~1.64]{Brezzi} (see also Theorem \ref{Brezzi:trace}) that $\pdnu u_{\eps} \in L^{2}(0,T;L^{2}(\Gamma))$, and from \eqref{eps:weakform} we obtain the following characterization for the surface part:
\begin{align*}
\LB u_{\Gamma,\eps}(t) = \pd_{t} u_{\Gamma,\eps}(t) +  \eps u_{\Gamma,\eps}(t) + \alpha \left (\beta_{\Gamma,\eps}(g(u_{\Gamma,\eps}(t))) + \pi_{\Gamma}(g(u_{\Gamma,\eps}(t))) - f_{\Gamma} \right ) + \alpha^{2} \pdnu u_{\eps}(t),
\end{align*}
holding for a.e.~$t \in (0,T)$.  As the right-hand side belongs to $L^{2}(0,T;L^{2}(\Gamma))$ we have by elliptic regularity that $u_{\Gamma,\eps} \in L^{2}(0,T;H^{2}(\Gamma))$.  This implies that the trace of $u_{\eps}$ belongs to $L^{2}(0,T;H^{\frac{3}{2}}(\Gamma))$ and by elliptic regularity we obtain
\begin{align*}
u_{\eps} \in L^{2}(0,T;H^{2}(\Omega)).
\end{align*}
\end{proof}

\subsubsection{Uniform estimates}

By virtue of Lemma \ref{lem:eps:reg} the approximation problem can be expressed as
\begin{subequations}\label{CF:approx:eps}
\begin{alignat}{3}
\pd_{t} u_{\eps} - \Lap u_{\eps} + \beta_{\eps}(u_{\eps}) + \eps u_{\eps} + \pi(u_{\eps}) = f & \text{ in } Q, \label{eps:bulk}  \\
\pd_{t} u_{\Gamma,\eps} - \LB u_{\Gamma,\eps} + \eps u_{\Gamma,\eps}  + \alpha ( \beta_{\Gamma,\eps}(g(u_{\Gamma,\eps})) + \pi_{\Gamma}(g(u_{\Gamma,\eps})) ) + \alpha^{2} \pdnu u_{\eps} = \alpha f_{\Gamma} & \text{ on } \Sigma, \label{eps:surf} \\
u_{\Gamma,\eps} = u_{\eps} \vert_{\Sigma} & \text{ on } \Sigma, \label{eps:Diri} \\
u_{\eps}(0) = u_{0} \text{ in } \Omega, \quad u_{\Gamma,\eps}(0) = u_{0} \vert_{\Gamma} & \text{ on } \Gamma, \label{eps:ini}
\end{alignat}
\end{subequations}
where \eqref{eps:bulk} and \eqref{eps:surf} hold pointwise a.e.~in $Q$ and on $\Sigma$, respectively.  We now derive estimates that are independent of $\eps$.  Below the symbol $C$ will denote positive constants that are independent of $\eps$.

\paragraph{First estimate.} Testing \eqref{eps:bulk} with $\pd_{t} u_{\eps}$, and using \eqref{eps:surf} leads to
\begin{align*}
& \frac{\dd}{\dt} \left ( \int_{\Omega} \frac{1}{2} \abs{\nabla u_{\eps}}^{2} + \hat{\beta}_{\eps}(u_{\eps}) + \hat{\pi}(u_{\eps}) + \frac{\eps}{2} \abs{u_{\eps}}^{2} \dx \right ) \\
& \qquad + \frac{\dd}{\dt} \left (  \int_{\Gamma} \frac{1}{2} \alpha^{-2} \abs{\surf u_{\Gamma,\eps}}^{2} + \hat{\beta}_{\Gamma, \eps}(g(u_{\Gamma,\eps})) + \hat{\pi}_{\Gamma}(g(u_{\Gamma,\eps})) + \frac{\eps}{2 \alpha^2} \abs{u_{\Gamma,\eps}}^{2} \dHaus  \right ) \\
& \qquad + \int_{\Omega} \abs{\pd_{t} u_{\eps}}^{2} \dx + \int_{\Gamma} \alpha^{-2} \abs{\pd_{t} u_{\Gamma,\eps}}^{2} \dHaus \\
 & \quad = \int_{\Omega} f \, \pd_{t} u_{\eps} \dx +\int_{\Gamma} \alpha^{-1} f_{\Gamma} \, \pd_{t} u_{\Gamma,\eps} \dHaus.
\end{align*}
In the above we have used that $\alpha^{-1} = g'(u_{\Gamma,\eps})$ and so
\begin{align*}
\int_{\Gamma} \alpha^{-1} \beta_{\Gamma,\eps}(g(u_{\Gamma,\eps})) \, \pd_{t} u_{\Gamma,\eps} \dHaus = \int_{\Gamma} \beta_{\Gamma,\eps}(g(u_{\Gamma,\eps})) \, \pd_{t} g(u_{\Gamma,\eps}) \dHaus = \frac{\dd}{\dt} \int_{\Gamma} \hat{\beta}_{\Gamma,\eps}(g(u_{\Gamma,\eps})) \dHaus.
\end{align*}
Applying Young's inequality and \eqref{Yosida:ini} we find that
\begin{equation}\label{CF:eps:1a}
\begin{aligned}
& \norm{\nabla u_{\eps}}_{L^{\infty}(0,T;L^{2}(\Omega))} + \norm{\hat{\beta}_{\eps}(u_{\eps}) + \hat{\pi}(u_{\eps})}_{L^{\infty}(0,T;L^{1}(\Omega))} \\
& \quad + \norm{\surf u_{\Gamma,\eps}}_{L^{\infty}(0,T;L^{2}(\Gamma))} + \norm{\hat{\beta}_{\Gamma,\eps}(g(u_{\Gamma,\eps})) + \hat{\pi}_{\Gamma}(g(u_{\Gamma,\eps}))}_{L^{\infty}(0,T;L^{1}(\Gamma))} \\
& \quad + \norm{\pd_{t} u_{\eps}}_{L^{2}(0,T;L^{2}(\Omega))}+ \norm{\pd_{t} u_{\Gamma,\eps}}_{L^{2}(0,T;L^{2}(\Gamma))} \leq C.
\end{aligned}
\end{equation}
Using \eqref{Linfty:L2} and a Gronwall argument we also have that
\begin{align}\label{CF:eps:1b}
\norm{u_{\eps}}_{L^{\infty}(0,T;L^{2}(\Omega))} + \norm{u_{\Gamma,\eps}}_{L^{\infty}(0,T;L^{2}(\Gamma))} \leq C.
\end{align}

\paragraph{Second estimate.} Lipschitz continuity of $\beta_{\eps}$ implies $\beta_{\eps}(u_{\eps}) \in L^{2}(0,T;H^{1}(\Omega))$, and so testing \eqref{eps:bulk} with $\beta_{\eps}(u_{\eps})$ and simplifying with \eqref{eps:surf} then yields
\begin{equation}
\label{CF:eps:3:est1}
\begin{aligned}
& \int_{\Omega} \underbrace{\beta_{\eps}'(u_{\eps}) \abs{\nabla u_{\eps}}^{2}}_{\geq 0} + \abs{\beta_{\eps}(u_{\eps})}^{2} + \underbrace{\eps u_{\eps} \beta_{\eps}(u_{\eps})}_{\geq 0} \dx + \int_{\Gamma} \underbrace{\alpha^{-2} \beta_{\eps}'(u_{\Gamma,\eps}) \abs{\surf u_{\Gamma,\eps}}^{2}}_{\geq 0} \dHaus \\
& \quad = - \int_{\Gamma} \alpha^{-1} \left ( \alpha^{-1} \pd_{t}u_{\Gamma,\eps} + \beta_{\Gamma,\eps}(g(u_{\Gamma,\eps})) + \alpha^{-1} \eps u_{\Gamma,\eps} + \pi_{\Gamma}(g(u_{\Gamma,\eps})) - f_{\Gamma} \right ) \beta_{\eps}(u_{\Gamma,\eps}) \dHaus \\
& \qquad - \int_{\Omega} (\pd_{t}u_{\eps} + \pi(u_{\eps}) - f) \beta_{\eps}(u_{\eps}) \dx.
\end{aligned}
\end{equation}
Recalling the resolvent operator $\mathcal{J}_{\eps} := (I - \eps \beta)^{-1}$ is a Lipschitz operator with constant $1$ and the fact that $\mathcal{J}_{\eps}(0) = 0 - \eps \beta_{\eps}(0) = 0$, from \eqref{resolvent} and the assumption \eqref{Limit:strong:beta} we have
\begin{align*}
\abs{\beta_{\eps}(u_{\Gamma,\eps})} \leq C \left ( 1 + \abs{ \mathcal{J}_{\eps}(u_{\Gamma,\eps})}^{q} \right ) \leq C(q) \left ( 1 + \abs{u_{\Gamma,\eps}}^{q} \right ).
\end{align*}
A similar estimate also holds for $\beta_{\Gamma,\eps}(g(u_{\Gamma,\eps}))$, namely
\begin{align*}
\abs{\beta_{\Gamma,\eps}(g(u_{\Gamma,\eps}))} \leq C(r, \alpha, \eta) \left ( 1 + \abs{u_{\Gamma,\eps}}^{r} \right ),
\end{align*}
and hence, the product term $\beta_{\Gamma,\eps}(g(u_{\Gamma,\eps})) \beta_{\eps}(u_{\Gamma,\eps})$ can be handled as follows:
\begin{align*}
-\int_{\Gamma} \alpha^{-1} \beta_{\Gamma,\eps}(g(u_{\Gamma,\eps})) \beta_{\eps}(u_{\Gamma,\eps}) \dHaus \leq C \int_{\Gamma} 1 + \abs{u_{\Gamma,\eps}}^{qr} \dHaus \leq C \left ( 1 + \norm{u_{\Gamma,\eps}}_{L^{\infty}(0,T;H^{1}(\Gamma))}^{qr} \right ) \leq C,
\end{align*}
thanks to the Sobolev embedding $H^{1}(\Gamma) \subset L^{s}(\Gamma)$ for any $s < \infty$.  Then, using \eqref{CF:eps:1a} and \eqref{CF:eps:1b} the right-hand side of \eqref{CF:eps:3:est1} can be estimated by
\begin{align*}
C \left ( 1 + \norm{\pd_{t} u_{\eps}}_{L^{2}(\Omega)}^{2} + \norm{\pd_{t} u_{\Gamma,\eps}}_{L^{2}(\Gamma)}^{2} + \norm{f_{\Gamma}}_{L^{2}(\Gamma)}^{2} + \norm{f}_{L^{2}(\Omega)}^{2} \right ) + \frac{1}{2} \norm{\beta_{\eps}(u_{\eps})}_{L^{2}(\Omega)}^{2},
\end{align*}
so that we obtain from \eqref{CF:eps:3:est1} the estimate
\begin{align}\label{CF:eps:3a}
\norm{\beta_{\eps}(u_{\eps})}_{L^{2}(0,T;L^{2}(\Omega))} \leq C,
\end{align}
and in turn via a comparison of terms in \eqref{eps:bulk} we have 
\begin{align*}
\norm{\Lap u_{\eps}}_{L^{2}(0,T;L^{2}(\Omega))} \leq C.
\end{align*}
Thanks to \eqref{CF:eps:3a}, in viewing \eqref{eps:bulk} as an elliptic equation for $u_{\eps}$ with right-hand side belonging to $L^{2}(0,T;L^{2}(\Omega))$ and Dirichlet boundary data $u_{\Gamma,\eps} \in H^{1}(\Gamma)$, by virtue of elliptic regularity and Theorem \ref{Brezzi:trace} one obtains
\begin{align}\label{CF:eps:3b}
\norm{u_{\eps}}_{L^{2}(0,T;H^{\frac{3}{2}}(\Omega))} + \norm{\pdnu u_{\eps}}_{L^{2}(0,T;L^{2}(\Gamma))} \leq C.
\end{align}

\begin{remark}\label{rem:CalaColli}
Note that for $g(s) = \alpha^{-1}(s-\eta)$, it may be natural to consider an assumption of the form
\begin{align*}
D(\beta) \supset D(\beta_{\Gamma} \circ g), \quad \abs{\beta^{\circ}(s)} \leq c_{0} \abs{\beta_{\Gamma}^{\circ}(g(s))} + c_{1} \quad \forall s \in \R
\end{align*}
which is analogous to \cite[(2.22)-(2.23)]{CalaColli}.  However, the above conditions
may not hold even in the case where $\beta = \beta_{\Gamma}$.  Take for example $\hat{\beta}(r) = I_{[-1,1]}(r)$ as the indicator function of the set $[-1,1]$, then it is well-known that
\begin{align*}
\beta(r) = \begin{cases}
[0,\infty) & \text{ for } r = 1, \\
0 & \text{ for } \abs{r} < 1, \\
(-\infty,0] & \text{ for } r = -1,
\end{cases} \quad \text{ with } D(\beta) = [-1,1],
\end{align*}
and a short computation shows that $D(\beta \circ g) = [-\alpha + \eta, \alpha + \eta]$ if $\alpha > 0$ and $D(\beta \circ g) = [\alpha + \eta, -\alpha + \eta]$ if $\alpha < 0$.  Hence, it is possible that $D(\beta) \cap D(\beta \circ g) = \emptyset$ for some values of $\alpha$ and $\eta$.
\end{remark}

\paragraph{Third estimate.}
Testing \eqref{eps:surf} with $g'(u_{\Gamma,\eps}) \beta_{\Gamma,\eps}(g(u_{\Gamma,\eps})) = \alpha^{-1} \beta_{\Gamma,\eps}(g(u_{\Gamma,\eps}))$ we have
\begin{equation}\label{CF:eps:3c}
\begin{aligned}
& \int_{\Gamma} \alpha^{-2} \beta_{\Gamma,\eps}'(u_{\Gamma,\eps}) \abs{\surf u_{\Gamma,\eps}}^{2} + \abs{\beta_{\Gamma,\eps}(g(u_{\Gamma,\eps}))}^{2}  \dHaus \\
& \quad = \int_{\Gamma} \left ( f_{\Gamma} - \alpha^{-1} \pd_{t}u_{\Gamma,\eps} -\alpha^{-1} \eps u_{\Gamma,\eps} - \alpha \pdnu u_{\eps} - \pi_{\Gamma}(g(u_{\Gamma,\eps})) \right ) \beta_{\Gamma,\eps}(g(u_{\Gamma,\eps})) \dHaus.
\end{aligned}
\end{equation}
In light of the estimate \eqref{CF:eps:3b} on $\pdnu u_{\eps}$ as well as previous estimates, we obtain 
\begin{align}\label{CF:eps:4a}
\norm{\beta_{\Gamma,\eps}(g(u_{\Gamma,\eps}))}_{L^{2}(0,T;L^{2}(\Gamma))} \leq C.
\end{align}
Then, viewing \eqref{eps:surf} as an elliptic equation for $u_{\Gamma,\eps}$ with right-hand side belonging to $L^{2}(0,T;L^{2}(\Gamma))$, elliptic regularity yields
\begin{align*}
\norm{u_{\Gamma,\eps}}_{L^{2}(0,T;H^{2}(\Gamma))} \leq C.
\end{align*}
This improved regularity of the Dirichlet boundary value $u_{\Gamma,\eps}$ for \eqref{eps:bulk} viewed as an elliptic equation for $u_{\eps}$ then leads to 
\begin{align}
\label{CF:eps:4b}
\norm{u_{\eps}}_{L^{2}(0,T;H^{2}(\Omega))} \leq C.
\end{align}

\subsubsection{Passing to the limit $\eps \to 0$}\label{sec:ptlim}
Owning to the uniform estimates \eqref{CF:eps:1a}, \eqref{CF:eps:1b}, \eqref{CF:eps:3a}-\eqref{CF:eps:4b}, we deduce the existence of a non-relabelled subsequence and limit functions $(u, u_{\Gamma}, \xi, \xi_{\Gamma})$ such that
\begin{equation*}
\begin{alignedat}{3}
u_{\eps} & \to u &&\text{ weakly-* in } L^{2}(0,T;H^{2}(\Omega)) \cap H^{1}(0,T;L^{2}(\Omega)) \cap L^{\infty}(0,T;H^{1}(\Omega)), \\
u_{\Gamma,\eps} & \to u_{\Gamma} && \text{ weakly-* in } L^{2}(0,T;H^{2}(\Gamma)) \cap H^{1}(0,T;L^{2}(\Gamma)) \cap L^{\infty}(0,T;H^{1}(\Gamma)), \\
\beta_{\eps}(u_{\eps}) & \to \xi && \text{ weakly in } L^{2}(0,T;L^{2}(\Omega)), \\
\beta_{\Gamma,\eps}(g(u_{\Gamma,\eps})) & \to \xi_{\Gamma} && \text{ weakly in } L^{2}(0,T;L^{2}(\Gamma)), \\
u_{\eps} & \to u && \text{ strongly in } C^{0}([0,T];L^{q}(\Omega)) \text{ for } q < 6,\\ 
u_{\Gamma,\eps} & \to u_{\Gamma} && \text{ strongly in } C^{0}([0,T];L^{r}(\Gamma)) \text{ for } r < \infty.
\end{alignedat}
\end{equation*}
By \eqref{eps:Diri}, it holds that $u_{\Gamma} = u \vert_{\Sigma}$ a.e.~on $\Sigma$.  Furthermore, by the linearity of the trace operator and the weak convergence of $u_{\eps}$ in $L^{2}(0,T;H^{2}(\Omega))$, it holds $\pdnu u_{\eps} \to \pdnu u$ weakly in $L^{2}(0,T;H^{\frac{1}{2}}(\Gamma))$.  We also obtain the assertion $\xi \in \beta(u)$ a.e.~in $Q$ and $\xi_{\Gamma} \in \beta_{\Gamma}(g(u_{\Gamma}))$ thanks to the strong convergences of $u_{\eps}$ and $u_{\Gamma,\eps}$, and the affine linearity of $g$.  Thus, passing to the limit $\eps \to 0$ in \eqref{CF:approx:eps} leads to \eqref{ACAC:limit}.
 
\subsubsection{Further regularity}
The regularity assertions 
\begin{align*}
\pd_{t} u \in L^{\infty}(0,T;L^2(\Omega)) \cap L^2(0,T;H^1(\Omega)), \quad \pd_{t} u_{\Gamma} \in L^{\infty}(0,T;L^2(\Gamma)) \cap L^2(0,T;H^1(\Gamma))
\end{align*}
can be derived formally by differentiating \eqref{eps:bulk} in time, testing the resulting equation by $\pd_{t} u_{\eps}$, integrating by parts and simplifying with \eqref{eps:surf}, leading to an estimate of the form
\begin{equation}\label{reg:1}
\begin{aligned}
& \frac{d}{dt} \frac{1}{2} \Big ( \norm{\pd_{t} u_{\eps}}_{L^{2}(\Omega)}^{2} + \alpha^{-2} \norm{\pd_{t} u_{\Gamma,\eps}}_{L^{2}(\Gamma)}^{2} \Big ) + \norm{\nabla \pd_{t} u_{\eps}}_{L^2(\Omega)}^{2} + \alpha^{-2} \norm{\surf \pd_{t} u_{\Gamma,\eps}}_{L^2(\Gamma)}^2 \\
& \qquad + \int_{\Omega} \beta_{\eps}'(u_{\eps}) \abs{\pd_{t} u_{\eps}}^2 + \eps \abs{\pd_{t} u_{\eps}}^2 \dx + \int_{\Gamma} \alpha^{-2} \beta'_{\Gamma,\eps}(g(u_{\Gamma,\eps})) \abs{\pd_{t} u_{\Gamma,\eps}}^2 \dHaus \\
& \quad \leq C \Big (\norm{\pd_{t} u_{\eps}}_{L^2(\Omega)}^2 + \norm{\pd_{t} u_{\Gamma,\eps}}_{L^2(\Gamma)}^2 \Big ) + C \Big (\norm{\pd_{t} f}_{L^2(\Omega)}^2 + \norm{\pd_{t} f_{\Gamma}}_{L^2(\Gamma)}^2 \Big ),
\end{aligned} 
\end{equation}
where we used the Lipschitz continuity of $\pi$ and $\pi_{\Gamma}$.  Note that
\begin{align*}
\pd_{t} u_{\eps}(0) & := \Lap u_{0} - \beta_{\eps}(u_{0}) - \pi(u_{0}) - \eps u_{0} + f(0), \\
\pd_{t} u_{\Gamma,\eps}(0) & := \LB u_{\Gamma,0} - \alpha (\beta_{\Gamma,\eps}(g(u_{\Gamma,0})) - \pi_{\Gamma}(g(u_{\Gamma,0})) - \eps u_{\Gamma,0} ) - \alpha^2 \pdnu u_0 + \alpha f_{\Gamma}(0),
\end{align*}
where $u_{\Gamma,0} := u_0 \vert_{\Gamma}$, are bounded uniformly in $L^2(\Omega)$ and $L^2(\Gamma)$, respectively.  Indeed, we use the property $\beta_\eps(v) \rightharpoonup \beta^{\circ}(v)$ in $L^2(\Omega)$ as $\eps \to 0$ for any $v \in D(\beta)$ when viewing $\beta:L^2(\Omega) \to 2^{L^2(\Omega)}$ as a set-valued operator.  Then, the boundedness assertion for $\pd_{t} u_{\eps}(0)$ is a consequence of the assumption \eqref{ass:ini:strong}.  A similar reasoning is applied to show the boundedness of $\pd_{t} u_{\Gamma,\eps}(0)$.  Therefore, applying \eqref{ass:f}, the non-negativity of $\beta'_{\eps}$ and $\beta'_{\Gamma,\eps}$ and the Gronwall inequality, we infer (formally) from \eqref{reg:1} that 
\begin{align*}
\norm{\pd_{t} u_{\eps}}_{L^{\infty}(0,T;L^2(\Omega)) \cap L^2(0,T;H^1(\Omega))} + \norm{\pd_{t} u_{\Gamma,\eps}}_{L^{\infty}(0,T;L^2(\Gamma)) \cap L^2(0,T;H^1(\Gamma))} \leq C.
\end{align*}
Returning to \eqref{CF:eps:3:est1} we find that $\beta_{\eps}(u_{\eps})$ is bounded in $L^{\infty}(0,T;L^2(\Omega))$, and we improve the time regularity in \eqref{CF:eps:3b} from $L^2(0,T)$ to $L^{\infty}(0,T)$.  Then, in a similar fashion, from \eqref{CF:eps:3c} we now infer that $\beta_{\Gamma,\eps}(g(u_{\Gamma,\eps}))$ is bounded in $L^{\infty}(0,T;L^2(\Gamma))$, which yields that $u_{\Gamma,\eps}$ is bounded in $L^{\infty}(0,T;H^2(\Gamma))$ and $u_{\eps}$ is bounded in $L^{\infty}(0,T;H^2(\Omega))$.

However, there is no indication from the proof of Lemma \ref{lem:eps:wellpose} that we can differentiate \eqref{eps:bulk} in time, since the second time derivative of $\bm{u}_{\eps}$ need not exist.  To justify the above formal computations in a rigorous way, we can employ a Faedo--Galerkin approximation for \eqref{CF:approx:eps}, similar as in \cite{CalaColli}, and perform the same procedure as in Section \ref{sec:UniEst} for the Robin problem \eqref{Intro:ACAC:gen} at the Galerkin level.  This yields an analogue of \eqref{reg:1} for the Galerkin solutions, similar to \eqref{Est:2a} and \eqref{Est:2b}, so that after passing to the limit $n \to \infty$ and $\eps \to 0$, we obtain a strong solution $(\tilde u, \tilde u_{\Gamma}, \tilde \xi, \tilde \xi_{\Gamma})$ with the regularity as stated in Theorem \ref{thm:Limit:Exist}.  Then, in conjunction with the uniqueness of solutions to \eqref{Limit:form:h} (see Theorem \ref{thm:Limit:ctsdep}), we see that the limit solution $(\tilde u, \tilde u_{\Gamma}, \tilde \xi, \tilde \xi_{\Gamma})$ from the Faedo--Galerkin approximation must coincide with the limit solution $(u, u_{\Gamma}, \xi, \xi_{\Gamma})$ obtained in Section \ref{sec:ptlim}, and so it holds that $(u, u_{\Gamma}, \xi, \xi_{\Gamma})$ also satisfies the regularity stated in Theorem \ref{thm:Limit:Exist}.

\subsection{Error estimates}
For $K > 0$, let $(u_{K}, \phi_{K})$ denote the unique strong solution to \eqref{ACAC:gen} with corresponding data $(u_{0,K}, \phi_{0,K}, f_{K}, f_{\Gamma,K})$, and let $(u,\phi)$ denote the unique strong solution to \eqref{Limit:form:h} with corresponding data $(u_{0}, \phi_{0}, f, f_{\Gamma})$.  Let $\hat{u} := u_{K} - u$, $\hat{\phi} := \phi_{K} - \phi$, $\hat{\xi} := \xi_{K} - \xi$, $\hat{\xi}_{\Gamma} := \xi_{\Gamma,K} - \xi_{\Gamma}$, $\hat{f} := f_{K} - f$ and $\hat{f}_{\Gamma} := f_{\Gamma,K} - f_{\Gamma}$ denote their differences and consider an arbitrary test function $\zeta \in H^{1}(\Omega)$.  From the bulk equations we see that
\begin{align}\label{Rate:bulk}
\int_{\Omega} \pd_{t}\hat{u} \, \zeta + \nabla \hat{u} \cdot \nabla \zeta + \hat{\xi} \, \zeta + (\pi(u_{K}) - \pi(u)) \, \zeta - \hat{f} \, \zeta \dx - \int_{\Gamma} \pdnu \hat{u} \, \zeta \dHaus = 0,
\end{align}
and from the surface equations we obtain for an arbitrary test function $\theta \in H^{1}(\Gamma)$,
\begin{align}\label{Rate:surf}
\int_{\Gamma} \frac{1}{\alpha} \left (\pd_{t} \hat{\phi} \, \theta + \surf \hat{\phi} \cdot \surf \theta + \hat{\xi}_{\Gamma} \, \theta + (\pi_{\Gamma}(\phi_{K}) - \pi_{\Gamma}(\phi)) \, \theta - \hat{f}_{\Gamma} \, \theta \right ) + \pdnu \hat{u} \, \theta \dHaus = 0.
\end{align}
Using the fact that $u \vert_{\Sigma} = h(\phi) = \alpha \phi + \eta$, a short calculation shows that
\begin{align*}
\pdnu \hat{u} = \pdnu (u_{K} - u) = \frac{1}{K}( h(\phi_{K}) - u_{K}) - \pdnu u = \frac{1}{K} (\alpha \hat{\phi} - \hat{u}) - \pdnu u.
\end{align*}
Substituting the above into \eqref{Rate:bulk} and \eqref{Rate:surf}, then substituting $\zeta = \hat{u}$ and $\theta = \alpha \hat{\phi}$ and adding the two equalities leads to
\begin{equation}\label{Rate:Cal1}
\begin{aligned}
& \frac{1}{2} \frac{\dd}{\dt} \left ( \norm{\hat{u}}_{L^{2}(\Omega)}^{2} + \norm{\hat{\phi}}_{L^{2}(\Gamma)}^{2} \right ) + \norm{\nabla \hat{u}}_{L^{2}(\Omega)}^{2} + \norm{\surf \hat{\phi}}_{L^{2}(\Gamma)}^{2} + \frac{1}{K} \norm{\alpha \hat{\phi} - \hat{u}}_{L^{2}(\Gamma)}^{2} \\
& \quad \leq L_{\pi} \norm{\hat{u}}_{L^{2}(\Omega)}^{2} + L_{\pi_{\Gamma}} \norm{\hat{\phi}}_{L^{2}(\Gamma)}^{2} + \norm{\hat{f}}_{L^{2}(\Omega)} \norm{\hat{u}}_{L^{2}(\Omega)} + \norm{\hat{f}_{\Gamma}}_{L^{2}(\Gamma)} \norm{\hat{\phi}}_{L^{2}(\Gamma)} \\
& \qquad + \norm{\pdnu u}_{L^{2}(\Gamma)} \norm{\alpha \hat{\phi} - \hat{u}}_{L^{2}(\Gamma)},
\end{aligned}
\end{equation}
where we have used the monotonicity of $\beta$ and $\beta_{\Gamma}$, and the Lipschitz continuity of $\pi$ and $\pi_{\Gamma}$.  Using Young's inequality, the right-hand side of \eqref{Rate:Cal1} can be estimated by
\begin{align*}
&\frac{1}{2K} \norm{\alpha \hat{\phi} - \hat{u}}_{L^{2}(\Gamma)}^{2} + \frac{K}{2} \norm{\pdnu u}_{L^{2}(\Gamma)}^{2} \\
& \quad + \frac{1}{2} \left ( (2 L_{\pi} + 1) \norm{\hat{u}}_{L^{2}(\Omega)}^{2} + (2 L_{\pi_{\Gamma}} + 1) \norm{\hat{\phi}}_{L^{2}(\Gamma)}^{2}  + \norm{\hat{f}}_{L^{2}(\Omega)}^{2} + \norm{\hat{f}_{\Gamma}}_{L^{2}(\Gamma)}^{2} \right ),
\end{align*}
so that by Gronwall's inequality, we have
\begin{align*}
& \norm{\hat{u}}_{L^{\infty}(0,T;L^{2}(\Omega)) \cap L^{2}(0,T;H^{1}(\Omega))}^{2} + \norm{\hat{\phi}}_{L^{\infty}(0,T;L^{2}(\Gamma)) \cap L^{2}(0,T;H^{1}(\Gamma))}^{2} + \frac{1}{K} \norm{\alpha \hat{\phi} - \hat{u}}_{L^{2}(\Sigma)}^{2} \\
& \quad \leq C \left ( \norm{\hat{f}}_{L^{2}(Q)}^{2} + \norm{\hat{f}_{\Gamma}}_{L^{2}(\Sigma)}^{2} + \norm{\hat{u}_{0}}_{L^{2}(\Omega)}^{2} + \norm{\hat{\phi}_{0}}_{L^{2}(\Gamma)}^{2} + K \norm{\pdnu u}_{L^{2}(\Gamma)}^{2} \right ),
\end{align*}
which is the inequality \eqref{Rate:Est:1}.

\appendix
\section{Trace theorem and elliptic regularity}
For the convenience of the reader, in this section we state the results of the trace theorem \cite[Theorem 2.27, p.~1.64]{Brezzi} we use to deduce that the normal derivative belongs to $L^{2}(0,T;L^{2}(\Gamma))$ and the elliptic regularity result \cite[Theorem 3.2, p.~1.79]{Brezzi} which we have used extensively in this work.

\begin{thm}[{\cite[Theorem 2.27]{Brezzi}}]\label{Brezzi:trace}
Let $\Omega$ be a smooth domain or a half space with boundary $\Gamma$, and let $A$ be an elliptic operator of the form
\begin{align}\label{form:A}
Av = - \sum_{i,j=1}^{d} \pd_{x_j} (a_{ij} \pd_{x_i} v) + \sum_{i=1}^{d} b_{i} \pd_{x_{i}} v - \sum_{i=1}^{d} \pd_{x_{i}}(c_i v) + dv
\end{align}
with coefficients $a_{ij}, b_i, c_i, d \in C^{\infty}(\overline{\Omega})$ and the matrix $(a_{ij})_{1 \leq i,j \leq d}$ is positive-definite uniformly in $x \in \Omega$.  Consider the space defined with the graph norm
\begin{align*}
W_{A,k}^{s,p}(\Omega) := \{ v \in W^{s,p}(\Omega) \, : \, Av \in W^{-2+k+\frac{1}{p}, p}(\Omega) \} \text{ for } k = 0,1.
\end{align*}
Then, for $s \in \R$, $1 < p < \infty$ and either $p = 2$ or $s - \frac{1}{p} \nin \Z$, there exist unique operators
\begin{align*}
\gamma_0 \in \mathcal{L}(W^{s,p}_{A,0}(\Omega) \, ; \, W^{s-\frac{1}{p}, p}(\Gamma)), \text{ and } \gamma_A \in \mathcal{L}(W^{s,p}_{A,1}(\Omega) \, ; \, W^{s-1-\frac{1}{p},p}(\Gamma))
\end{align*}
such that $\gamma_0 v = v \vert_\Gamma$ and $\gamma_A v = \frac{\pd v}{\pd \bm{\nu}_A} \vert_\Gamma := (\sum_{i,j=1}^{d} a_{ij} \nu_j \pd_{x_i} v + \sum_{i=1}^d c_i \nu_j v) \vert_{\Gamma}$ for all $v \in C^{\infty}(\overline{\Omega}) \cap W^{s,p}_{A,j}(\Omega)$.  Furthermore, there exists an operator
\begin{align*}
R \in \mathcal{L}( W^{s-\frac{1}{p}, p}(\Gamma) \times W^{s-1-\frac{1}{p}, p}(\Gamma) \, ; \, W^{s,p}_{A,1}(\Omega))
\end{align*}
such that $\gamma_0 R(g_0, g_1) = g_0$ and $\gamma_A R(g_0, g_1) = g_1$ for all $g_k \in W^{s-k-\frac{1}{p},p}(\Gamma)$ for $k = 0,1$.
\end{thm}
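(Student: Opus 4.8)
The plan is to reduce the statement to a model problem on the half-space by localization and flattening, and then to build the three operators $\gamma_0$, $\gamma_A$ and $R$ separately. First I would choose a finite atlas $\{U_m\}$ of $\overline{\Omega}$ with a subordinate partition of unity, splitting the charts into interior ones (which contribute nothing to the boundary operators) and boundary charts in which a smooth diffeomorphism flattens $\Gamma$ to the hyperplane $\{x_d = 0\}$. Pulling $A$ back through each chart preserves ellipticity and the smoothness of the coefficients, so after freezing the principal coefficients at the relevant boundary point and absorbing the smooth lower-order remainder as a bounded perturbation, it suffices to treat a constant-coefficient elliptic operator $A_0$ on $\R^{d}_{+} = \{x_d > 0\}$ and then glue the local constructions back together against the partition of unity.

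On the half-space the Dirichlet trace $\gamma_0 v := v|_{x_d = 0}$ is the classical trace map, and I would prove the bound $\gamma_0 \in \mathcal{L}(W^{s,p}_{A,0}(\R^d_+); W^{s-1/p,p}(\R^{d-1}))$ by the Fourier-analytic argument when $p = 2$ and by real interpolation and Besov-space characterizations for general $1 < p < \infty$. The hypothesis that either $p = 2$ or $s - \frac{1}{p} \notin \Z$ is precisely the non-borderline condition guaranteeing that the trace lands in, and surjects onto, the Sobolev--Slobodeckij space $W^{s-1/p,p}(\Gamma)$ rather than a Besov space with a logarithmic defect.

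The core of the theorem, and what I expect to be the main obstacle, is the definition and continuity of the conormal derivative $\gamma_A$, since $v$ carries only the finite regularity $W^{s,p}$ while $Av$ is controlled merely in the negative-order space $W^{-2+k+1/p,p}$. For smooth $v$ I would start from the generalized Green identity $\int_{\Omega}(Av)\varphi \dx = a(v,\varphi) - \int_{\Gamma}(\gamma_A v)(\gamma_0 \varphi)\dHaus$, where $a(\cdot,\cdot)$ is the Dirichlet form associated with $A$; this identifies $\gamma_A v$ with the boundary functional $\psi \mapsto a(v,\varphi) - \langle Av,\varphi\rangle$ for any lift $\varphi$ of $\psi$. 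The delicate points are to verify that this functional is independent of the chosen lift and that it is bounded on the correct trace space, so that $\gamma_A v \in W^{s-1-1/p,p}(\Gamma)$ with norm controlled by the graph norm of $W^{s,p}_{A,1}$. The exponent bookkeeping is exactly where the weight $-2+k+\frac{1}{p}$ on $Av$ enters, through the duality $W^{-2+k+1/p,p} = (W^{2-k-1/p,p'})^{*}$ paired against the trace of $\varphi$; density of $C^{\infty}(\overline{\Omega})$ in the graph-norm space then extends $\gamma_A$ by continuity and shows it agrees with the classical conormal derivative on smooth functions.

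Finally I would construct the right inverse $R$ by solving, on the half-space in Fourier variables, for a function with prescribed zeroth and first conormal data. Writing $R(g_0,g_1) = E_0 g_0 + E_1 g_1$, where $E_0$ extends $g_0$ with vanishing conormal derivative and $E_1$ extends $g_1$ with vanishing Dirichlet trace, both realized as Fourier multipliers built from the symbol of $A_0$, one obtains $\gamma_0 R(g_0,g_1) = g_0$ and $\gamma_A R(g_0,g_1) = g_1$ together with the mapping bound into $W^{s,p}_{A,1}(\Omega)$. Transporting these local half-space liftings back through the charts and summing against the partition of unity yields the global operator $R$, completing the proof.
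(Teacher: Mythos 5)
First, a point of order: the paper does not prove this statement at all. Theorem \ref{Brezzi:trace} is quoted verbatim in the appendix from the cited reference \cite[Theorem 2.27]{Brezzi} purely for the reader's convenience, so there is no in-paper argument to measure your proposal against; the only possible comparison is with the standard proofs in the literature (Lions--Magenes, Brezzi--Gilardi), and your outline does follow that classical route: localization and flattening to the half space, a Green-identity/duality definition of the conormal trace, and Fourier-multiplier liftings for the right inverse $R$.

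Measured against that literature, however, your sketch has one genuine gap. You treat $\gamma_0$ as ``the classical trace map'' and propose to bound it on $W^{s,p}$ by Fourier analysis, real interpolation and Besov characterizations. This only works for $s > \frac{1}{p}$. The theorem is asserted for \emph{every} $s \in \R$ --- including $s \leq \frac{1}{p}$ and negative $s$ --- and in that range no trace operator exists on $W^{s,p}(\Omega)$ alone: that is precisely why the graph space $W^{s,p}_{A,0}(\Omega)$, carrying the extra information $Av \in W^{-2+\frac{1}{p},p}(\Omega)$, is introduced in the statement. For low $s$ the Dirichlet trace must be constructed by the same duality mechanism you reserve for $\gamma_A$ --- pairing $Av$ against solutions of an adjoint boundary value problem, or splitting $v$ into a regular part plus the solution of an elliptic problem whose boundary behavior is known --- not by classical restriction; your argument as written would silently restrict the theorem to $s > \frac{1}{p}$. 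Relatedly, the density of $C^{\infty}(\overline{\Omega})$ in the graph-norm spaces, which you invoke to extend $\gamma_A$ by continuity and which is what makes the uniqueness assertions for $\gamma_0$ and $\gamma_A$ meaningful, is itself a nontrivial lemma for arbitrary real $s$ (it is one of the technical cores of the cited proof) and cannot simply be assumed.
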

For our purpose in the proof of Lemma \ref{lem:eps:reg}, we employ Theorem \ref{Brezzi:trace} with $a_{ij} = \delta_{ij}$, $b_i = c_i = d = 0$, $s = \frac{3}{2}$, $p = 2$ and $k = 1$ with $u_{\eps} \in L^{\infty}(0,T;H^{\frac{3}{2}}(\Omega))$ and $\Lap u_{\eps} \in L^{2}(0,T;L^{2}(\Omega))$ to deduce that $\gamma_A u_\eps = \pdnu u_\eps$ belongs to $L^{2}(0,T;L^{2}(\Gamma))$.

\begin{thm}[{\cite[Theorem 3.2]{Brezzi}}]\label{Brezzi:reg}
Let $\Omega$ be a smooth and bounded domain with boundary $\Gamma$, and let $A$ be a uniformly elliptic operator of the form \eqref{form:A}.  Assume $u$ is a distributional solution to the equation $Au = f$ in $\Omega$ with one of the following boundary conditions
\begin{align*}
u \vert_\Gamma = g_0 \text{ or } \frac{\pd u }{\pd \bm{\nu}_A} \vert_\Gamma = g_1.
\end{align*}
Assume $r, t, s \in \R$, $1 < p < \infty$, and either $p = 2$ or $s - \frac{1}{p} \nin \Z$, and let $f \in W^{r,p}(\Omega)$.  If $g_0 \in W^{t,p}(\Gamma)$, $r+2 \geq \frac{1}{p}$ and $s = \min(r + 2, t + \frac{1}{p})$, then the solution $u$ belongs to $W^{s,p}(\Omega)$.  If $g_1 \in W^{t,p}(\Gamma)$, $r+1 \geq \frac{1}{p}$ and $s = \min(r + 2, t + 1 + \frac{1}{p})$, then the solution $u$ belongs to $W^{s,p}(\Omega)$.  In both cases we have the estimate
\begin{align*}
\norm{u}_{W^{s,p}(\Omega)} \leq C \Big ( \norm{f}_{W^{r,p}(\Omega)} + \norm{g_j}_{W^{t,p}(\Gamma)} \Big ) \text{ for } j = 0, 1,
\end{align*}
with a constant $C$ not depending on $f$ and $g_j$.
\end{thm}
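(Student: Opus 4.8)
The plan is to prove this as a classical elliptic boundary value estimate in the full Sobolev scale, following the Agmon--Douglis--Nirenberg theory together with the explicit construction of a Poisson operator. First I would reduce to a model problem by localization: choose a finite partition of unity subordinate to a cover of $\overline{\Omega}$ by interior balls and boundary charts, and in each boundary chart flatten $\Gamma$ by a smooth diffeomorphism so that the operator $A$ of the form \eqref{form:A} becomes a uniformly elliptic operator with smooth coefficients on a piece of the half-space $\R^{d}_{+}$. The interior pieces are disposed of by the standard interior elliptic (Calder\'{o}n--Zygmund) estimates, so the substance lies entirely at the boundary.

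For the boundary model I would freeze the coefficients of $A$ at a boundary point to obtain a constant-coefficient elliptic operator $A_{0}$, apply the tangential Fourier transform $x' \mapsto \xi'$, and thereby reduce $A_{0} u = f$ to a second-order ODE in the normal variable $x_{d}$ parametrized by $\xi'$. By ellipticity the characteristic polynomial has, for $\xi' \neq 0$, exactly one root in the upper half-plane; selecting the decaying solution produces the explicit Poisson kernel. The structural fact to verify is the Lopatinski--Shapiro (complementing) condition for each boundary operator: for the Dirichlet trace $u \mapsto u\vert_{x_{d}=0}$ and for the conormal derivative $u \mapsto \partial_{\bm{\nu}_{A}} u$, the restriction to the one-dimensional space of decaying solutions must be non-degenerate. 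Both conditions are classical and hold for every second-order uniformly elliptic $A$, so the model boundary value problem is uniquely solvable and its solution operator is a Poisson operator of the expected order.

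From the explicit solution operator I would establish the model a priori estimate $\norm{u}_{W^{s,p}} \leq C(\norm{f}_{W^{r,p}} + \norm{g_{j}}_{W^{t,p}(\Gamma)})$ by Fourier-multiplier theory: Plancherel for $p=2$ and the Mikhlin--H\"{o}rmander multiplier theorem for general $1 < p < \infty$. The passage to fractional and negative values of $s$ is read off from the homogeneity (symbol order) of the volume and Poisson solution operators on the Bessel-potential scale, and this is precisely where the hypothesis ``$p = 2$ or $s - \frac{1}{p} \nin \Z$'' enters: under this restriction the trace operators $\gamma_{0}$ and $\gamma_{A}$ of Theorem \ref{Brezzi:trace} are well-defined and bounded on the relevant spaces, avoiding the exceptional orders at which the trace fails to exist. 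I would then restore the variable coefficients by perturbation: writing $A = A_{0} + (A - A_{0})$ on a sufficiently small chart, the difference contributes a principal part with small coefficients (by continuity) plus strictly lower-order terms, and a Neumann-series/parametrix construction $AR = I + S$ with $S$ smoothing absorbs these into the model estimate.

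Finally I would reassemble the local estimates through the partition of unity; the commutators of $A$ with the cutoff functions are of strictly lower order and are controlled by interpolation and a short bootstrap, yielding both the regularity $u \in W^{s,p}(\Omega)$ and the global estimate with a constant independent of $f$ and $g_{j}$. I expect the main obstacle to lie in Steps two and three -- verifying the complementing condition and, above all, proving the full-scale $L^{p}$ mapping estimate for the Poisson operator at fractional and negative smoothness, since the multiplier bounds for $p \neq 2$ and the careful tracking of the noninteger-trace restriction constitute the genuinely technical core of the argument.
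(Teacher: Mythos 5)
You should first note that the paper itself contains no proof of this statement: Theorem \ref{Brezzi:reg} is imported verbatim from the cited reference and stated in the appendix ``for the convenience of the reader,'' so there is no internal argument to compare against. Measured instead against the proof in the cited source (and the classical Agmon--Douglis--Nirenberg/Lions--Magenes theory it rests on), your outline follows essentially the standard route: localization and flattening, frozen-coefficient half-space model problems solved by tangential Fourier transform, verification of the complementing condition for the Dirichlet trace and the conormal derivative (both automatic for a scalar second-order uniformly elliptic operator with real coefficients, since the characteristic polynomial in the normal covariable has exactly one root in each half-plane), $L^{p}$ multiplier bounds, perturbation back to variable coefficients, and reassembly by a partition of unity. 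Your identification of the restriction ``$p = 2$ or $s - \frac{1}{p} \notin \Z$'' with the exceptional orders of the trace theory on this Sobolev scale is also correct.

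Two caveats, both patchable but worth naming. First, the parametrix-plus-commutator argument you describe yields the estimate only up to a lower-order term, i.e.\ $\norm{u}_{W^{s,p}(\Omega)} \leq C \left( \norm{f}_{W^{r,p}(\Omega)} + \norm{g_{j}}_{W^{t,p}(\Gamma)} + \norm{u}_{W^{s-1,p}(\Omega)} \right)$; removing the last term requires injectivity of the boundary value problem, and as literally stated the clean estimate fails without it (take $A = -\Lap - \lambda$ with $\lambda$ a Dirichlet eigenvalue: then $f = 0$, $g_{0} = 0$, yet $u \neq 0$). The quoted statement tacitly carries this uniqueness hypothesis from the source, and your proof must invoke it, or a Fredholm/compactness argument, at the final step. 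Second, the theorem is asserted on the full scale $r, t, s \in \R$, including values too low for $Au = f$ and the boundary traces to make direct sense in the ADN framework; on that range the bounded-domain, variable-coefficient case is handled by transposition/duality in the style of Lions--Magenes together with interpolation, not by the homogeneity of the half-space solution operators alone, which is the one place where your sketch substitutes a model-problem observation for the actual mechanism.
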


\section*{Acknowledgments}
K.F. Lam expresses his gratitude to P. Colli and the University of Pavia for their hospitality during his visit in which part of this research was completed, and gratefully acknowledges support from a Direct Grant of CUHK (project 4053288).  On the other hand, P. Colli gratefully acknowledges partial support from the MIUR-PRIN Grant 2015PA5MP7 ``Calculus of Variations'', the GNAMPA (Gruppo Nazionale per l'Analisi Matematica, la Probabilit\`{a} e le loro Applicazioni) of INdAM (Istituto Nazionale di Alta Matematica) and the IMATI -- C.N.R. Pavia.  Moreover, T. Fukao gratefully acknowledges the support from the JSPS KAKENHI Grant-in-Aid for Scientific Research(C), Grant Number 17K05321.  The authors would like to thank the anonymous referees for their careful reading and useful suggestions which have improved the quality of the manuscript.

\bibliographystyle{plain}
\bibliography{CFL}
\end{document}